\normalfont\fontsize{14}{15}\bfseries}{\thesection}{1em}{}
\DeclareFontFamily{U}{min}{}
\DeclareFontShape{U}{min}{m}{n}{<-> udmj30}{}
\newcommand*{\faktor}[2]{%
  \raisebox{0.5\height}{\ensuremath{#1}}%
  \mkern-5mu\diagup\mkern-4mu%
  \raisebox{-0.5\height}{\ensuremath{#2}}%
} 
\newcommand\restr[2]{{%
\left.
\kern-
\nulldelimiterspace %
#1 %
\right|_{#2} %
}}
\tikzset{>=stealth}
\theoremstyle{definition}
\newtheorem{theorem}{Theorem}[section]
\newtheorem{proposition}[theorem]{Proposition}
\newtheorem{corollary}[theorem]{Corollary}
\newtheorem{question}[theorem]{Question}
\newtheorem{remark}[theorem]{Remark}
\newtheorem{definition}[theorem]{Definition}
\newtheorem{example}[theorem]{Example}
\theoremstyle{definition}
\begin{document}
\title{$Sh(B)$-valued models of $(\kappa ,\kappa )$-coherent categories}
\author{Kristóf Kanalas\footnote{Department of Mathematics, Masaryk University, Kotlářská 2 (611 37) Brno, Czech Republic. E-mail: kanalas@mail.muni.cz}}
\date{ }

\maketitle

\begin{abstract}
    A basic technique in model theory is to name the elements of a model by introducing new constant symbols. We describe the analogous construction in the language of syntactic categories/ sites.
    
    As an application we identify $\mathbf{Set}$-valued regular functors on the syntactic category with a certain class of topos-valued models (we will refer to them as "$Sh(B)$-valued models"). For the coherent fragment $L_{\omega \omega }^g \subseteq L_{\omega \omega }$ this was proved by Jacob Lurie, our discussion gives a new proof, together with a generalization to $L_{\kappa \kappa }^g$ when $\kappa $ is weakly compact.
    
    We present some further applications: first, a $Sh(B)$-valued completeness theorem for $L_{\kappa \kappa }^g$ ($\kappa $ is weakly compact), second, that $\mathcal{C}\to \mathbf{Set} $ regular functors (on coherent categories with disjoint coproducts) admit an elementary map to a product of coherent functors.
\vspace{2mm}
$ $\\
\emph{Keywords:} $\kappa $-topos, $\kappa $-site, method of diagrams, $Sh(B)$-valued model\\
\emph{MSC 2020:} 18F10, 18C30, 03C90, 03C75

\end{abstract}

\tableofcontents

\section{Introduction}

Categorical logic is the algebraization of logic using categories. Given a theory in a certain fragment of first-order logic one can construct an abstract "category of formulas", called the syntactic category. Formula constructors translate to limits or colimits, while syntactic rules become rules regarding the interaction of colimits with limits (such rules are often called exactness properties).

The typical examples are lex, regular, coherent and Boolean coherent categories which can be identified with Horn, regular, coherent and full first-order theories, see \cite{makkai} for further details.

When moving from fragments of $L_{\omega \omega }$ to fragments of $L_{\lambda \kappa }$, we get infinitary exactness notions ($\kappa $-lex, $\kappa $-regular, $(\lambda ,\kappa )$-coherent). The next section provides a brief overview on this topic, following \cite{presheaftype}.

Then we will study the so-called "method of diagrams" from a categorical point of view. The idea is, that just like a natural transformation out of a representable $\mathcal{C}(x,-)$ is uniquely determined by the image of $id_x$, a lex extension of a $\mathcal{C}\to \mathcal{D}$ lex functor along $\mathcal{C}\to \faktor{\mathcal{C}}{x}$ is uniquely determined by the image of $\Delta _x :x\to x\times x$. This observation (say, "Yoneda for slices") means that moving from $\mathcal{C}$ to $\faktor{\mathcal{C}}{x}$ introduces a generic global element $1\to x$. We can introduce several new global elements by forming a filtered colimit of slices. The properties of these fresh constants will depend on the indexing diagram. When that is $(\int F)^{op}$ for a lex functor $F:\mathcal{C}\to \mathbf{Set}$ (the opposite of the category of elements), the resulting category $\mathcal{C}_F$ has constant names  corresponding to the elements of $F$. We remark that a similar construction has already appeared in \cite{reprGT}, the differences will be discussed at the beginning of Section 3.

In categorical terms, we will prove that given a $\kappa $-lex category $\mathcal{C}$, the functor category $\mathbf{Lex}_{\kappa }(\mathcal{C},\mathbf{Set})$ is a coreflective subcategory in $\mathcal{C}\downarrow \mathbf{Lex}_{\kappa }^{\sim }$, the (2,1)-category of $\kappa $-lex functors out of $\mathcal{C}$ (see Theorem \ref{addingconst}, and Corollary \ref{adjwithsites} for a version with $\kappa $-sites). 

In Section 4.~this method will be applied to give a new proof of Lurie's theorem (\cite[Theorem 11., Lecture 16X]{lurie}), which says that $\mathcal{C}\to \mathbf{Set}$ regular functors on a small pretopos $\mathcal{C}$ can be identified with $\mathcal{C}\to Sh(X)$ coherent functors, where $X$ is a non-fixed Stone-space. 

We will obtain the following generalization: when $\kappa $ is weakly compact and $\mathcal{C}$ is $(\kappa ,\kappa )$-coherent with $\kappa $-small disjoint coproducts, $\mathcal{C}\to \mathbf{Set}$ $\kappa $-regular functors can be identified with $\mathcal{C}\to Sh(B)$ $(\kappa , \kappa )$-coherent functors (Theorem \ref{Lurie}). Here $B$ is a non-fixed $(\kappa ,\kappa )$-coherent Boolean-algebra, considered with the topology $\tau _{\kappa -coh}$, formed by $\kappa $-small unions. 

In what follows, we will refer to such functors as $Sh(B)$-valued models. Let us briefly comment on the terminology: in the finitary case $Sh(B)$ can mean two different things. If $B$ is a complete Boolean-algebra and the topology consists of all unions then $Sh(B)=Sh(B,\tau _{can})$-valued models are the same as Boolean-valued models, an important concept which appears e.g.~in forcing and in completeness theorems for $L_{\infty ,\omega }$ (cf.~\cite[Section 4]{makkai}). When $B$ is an arbitrary Boolean-algebra considered with the finite union topology, then $Sh(B)=Sh(B,\tau _{coh})$ is the same as the spatial topos $Sh(X)$ where $X=Spec(B)$ is the Stone-space of $B$. Hence Lurie calls $Sh(B,\tau_{coh})$-valued models "parametrized", see \cite[Lecture 16X]{lurie}. Note that $Sh(B,\tau _{coh})=Sh(Id(B),\tau _{can})$, where $Id(B)$ is the ideal-completion of $B$. This is a consequence of the comparison lemma, applied to the full subcategory $Sub(1)=Id(B)\subseteq Sh(B,\tau _{coh}) $. So parametrized models can be seen as Heyting-valued models, where the complete Heyting-algebra is of the form $Id(B)$. 

We use the term "$Sh(B)$-valued", instead of "parametrized", as there is no Stone-duality for $(\kappa ,\kappa )$-coherent Boolean-algebras ($\kappa $ is weakly compact), so we can not conclude that $Sh(B)=Sh(B,\tau _{\kappa -coh})$ is spatial, i.e.~$Sh(B)$-valued models may not be parametrized by a topological space.

In any case, the above mentioned result is promising, as it says that something model-theoretically interesting ($Sh(B)$-valued models, a special class of Heyting-valued models) is the same as something category-theoretically simple ($\mathbf{Set}$-valued regular functors). Indeed, we have some applications:

First, a $Sh(B)$-valued completeness theorem for $(\kappa ,\kappa )$-coherent logic will easily follow (Theorem \ref{completeness}). Then we will compare $\mathcal{C}\to \mathbf{Set}$ regular functors with products of $\mathcal{C}\to \mathbf{Set}$ coherent functors. A natural transformation between lex functors is said to be elementary if the naturality squares at monomorphisms are pullbacks. We will prove that if $\mathcal{C}$ is a coherent category with disjoint coproducts, then every regular functor $F:\mathcal{C}\to \mathbf{Set}$ admits an elementary map to a product of coherent functors (the analogous statement for $(\kappa ,\kappa )$-coherent categories is true when $\kappa $ is strongly compact, see Theorem \ref{regtoprodcoh}). Along the way we will make some observations regarding elementary maps, e.g.~that if the unit of a geometric morphism is pointwise mono then it is elementary (Corollary \ref{unitelem}).

Finally, we mention a third application. In the book \cite{rosicky} the following was asked: "Is it true that in $\lambda $-accessible categories $\lambda $-pure maps are regular monomorphisms?" In \cite{purestrictmono} a partial positive answer is obtained, and Theorem \ref{regtoprodcoh} is the key ingredient of that proof.

\section{An overview of infinitary exactness properties}

For the convenience of the reader we provide a brief overview of the key ideas regarding compatibility between colimits and $\kappa $-small limits. For a more detailed discussion we refer to \cite{presheaftype}.

\begin{definition}
  ($\kappa $ is an infinite regular cardinal.)  A category is $\kappa $-lex if it has $\kappa $-small limits. A functor is $\kappa $-lex if it preserves $\kappa $-small limits.
\end{definition}

\begin{definition}
   ($\kappa $ is an infinite regular cardinal.) A category is $\kappa $-regular if
    \begin{itemize}
        \item[i)] it is $\kappa $-lex,
        \item[ii)] has pullback-stable effective epi-mono factorization,
        \item[iii)] and the limit of a shorter than $\kappa $, continuous (i.e.~at limit steps we have limits), co-well-ordered chain of effective epis 
\[
(x_0\twoheadleftarrow x_1\twoheadleftarrow \dots x_{\alpha }\twoheadleftarrow \dots )_{\alpha <\gamma <\kappa }
\]
is an effective epi. (Such limits are called transfinite cocompositions.)
    \end{itemize}

    A functor is $\kappa $-regular if it preserves $\kappa $-small limits and effective epimorphisms.
\end{definition}

$\kappa $-regular categories were defined by Michael Makkai in \cite{barrex}, as the correct notion of compatibility between effective epimorphisms and $\kappa $-small limits. 

Compatibility between effective epimorphic families and $\kappa $-small limits is a much more recent topic. It has its origin in the works of Christian Espíndola (see \cite{inflog}). The idea is very natural: when we glue together some epimorphic families (resulting in a cotree), the transfinite cocompositions of the branches should form an epimorphic family on the root. We will work with the following definition:

\begin{definition}
\label{lkcoherent}
    ($\lambda \geq \kappa $ are infinite regular cardinals.) A category is $(\lambda ,\kappa )$-coherent if
    \begin{itemize}
        \item[i)] it is $\kappa $-lex,
        \item[ii)] has pullback-stable effective epi-mono factorization,
        \item[iii)] has pullback-stable $\lambda $-small unions,
        \item[iv)] and given a rooted cotree (that is: the opposite of a rooted tree)

\[
\adjustbox{scale=0.75,center}{        

\begin{tikzcd}
	& {} & {} \\
	& {} & {} &&& \dots \\
	x & {x_{i_0}} & {x_{i_0,i_1}} & \dots & {x_{i_0,\dots }} & {x_{i_0,\dots i_{\omega }}} & \dots \\
	& \dots & {}
	\arrow[from=1-2, to=2-2]
	\arrow[from=1-3, to=2-2]
	\arrow[from=2-2, to=3-1]
	\arrow[from=2-3, to=2-2]
	\arrow[from=2-6, to=3-5]
	\arrow[from=3-2, to=3-1]
	\arrow[from=3-3, to=3-2]
	\arrow[from=3-4, to=3-3]
	\arrow[from=3-6, to=3-5]
	\arrow[from=3-7, to=3-6]
	\arrow[from=4-2, to=3-1]
	\arrow[from=4-3, to=3-2]
\end{tikzcd}
}
\]

such that on every vertex its predecessors form an extremal epimorphic family of size $<\lambda$, and every branch is $<\kappa $, continuous, it follows that the transfinite cocompositions of the branches form an extremal epimorphic family on the root.
    \end{itemize}

    A functor is $(\lambda ,\kappa )$-coherent if it preserves $\kappa $-small limits, effective epimorphisms and $\lambda $-small unions.
\end{definition}

Here we shall make an important remark. First recall, that an infinite cardinal $\kappa $ has the tree property, if given a tree of height $\kappa $ such that every level is of size $<\kappa $, it follows that the tree has a branch of size $\kappa $ (cf.~\cite[p.~120]{jech}). 

In our case, since $\lambda $ is regular there are $<\lambda $ objects in each level below a cardinal $\mu $, assuming $\alpha _i <\lambda $ for $i<\gamma <\mu $ implies $\prod _i \alpha _i <\lambda $. By the regularity of $\lambda $ that is the same as $\alpha <\lambda ,\gamma <\mu \Rightarrow \alpha ^{\gamma }<\lambda $, and we will abbreviate this by $(<\lambda )^{<\mu }=(<\lambda )$. Since every branch is $<\kappa $, the tree has height $\leq \kappa $. Therefore if $\lambda >\kappa $ with $(<\lambda )^{\kappa }=(<\lambda )$ or if $\lambda =\kappa $ has the tree property and satisfies $(<\kappa )^{<\kappa }=(<\kappa )$, it follows that the tree has $<\lambda $-many branches. The latter case is equivalent to $\kappa $ being weakly compact.

This will be important for two reasons:

First, the careful reader may have noticed a confusing imbalance in the above definition: it speaks about effective epimorphisms and extremal epimorphic families. It is well-known that (using the presence of $<\lambda $ unions) a $<\lambda $ extremal epimorphic family is also effective epimorphic, so this is only an issue when the resulting cotrees can have $\geq \lambda $-many branches. In this case we choose to work with the weaker notion (extremal epic) mainly as it is sufficient for proving a completeness theorem (see \cite{presheaftype}).

The other reason is that we shall often form the $\kappa $-filtered colimit of $(\kappa ,\kappa )$-coherent categories and without assuming $\kappa $ to be weakly compact, we can not conclude that the resulting category is also $(\kappa ,\kappa )$-coherent (as we would have to check a family of size $\geq \kappa $ which is not coming from a single object (category) of the indexing diagram).

We proceed with the definition of a $\kappa $-site:

\begin{definition}
    ($\kappa $ is an infinite regular cardinal.) A $\kappa $-site is a pair $(\mathcal{C},E)$ where $\mathcal{C}$ is a $\kappa $-lex category and $E$ is any collection of families (i.e.~diagrams of the form $(u_i\to x)_i$), such that $\{ 1\to 1 \}$ belongs to $E$.
\end{definition}

Some typical examples are: $\kappa $-lex categories with $E=\{1\to 1 \} $, $\kappa $-regular categories with  $E=\{ \text{effective epimorphisms}\}$ and $(\lambda ,\kappa )$-coherent categories with $E=\{<\lambda \text{ effective epimorphic families} \}$.

The idea is that $\kappa $-sites can encode the same amount of information as $(\lambda ,\kappa )$-coherent categories, but they don't have to satisfy any compatibility conditions.

\begin{definition}
    A Grothendieck-topology (on a $\kappa $-lex category $\mathcal{C}$) is said to be a $\kappa $-topology, if it is closed under building cotrees as in Definition \ref{lkcoherent} iv) (that is, assuming that on each vertex the predecessors form a cover, it follows that the transfinite cocompositions of the branches form a cover on the root).
\end{definition}

\begin{example}
\label{ktopolsubcanon}
    If $\mathcal{C}$ is $(\kappa ,\kappa )$-coherent and $\kappa $ is weakly compact, then the set of $\kappa $-small effective epimorphic families is a $\kappa $-topology on $\mathcal{C}$.
\end{example}

 Given any $\kappa $-site $(\mathcal{C},E)$ one can generate a $\kappa $-topology $\langle E\rangle _{\kappa }$ out of $E$, by first closing it under pullbacks, then under the formation of cotrees as in Definition \ref{lkcoherent} iv). This leads to the notion of maps between $\kappa $-sites:

\begin{definition}
    A morphism $F:(\mathcal{C},E)\to (\mathcal{D},E')$ between $\kappa $-sites is a $\kappa $-lex functor $F:\mathcal{C}\to \mathcal{D}$ which takes $E$-families (equivalently: $\langle E\rangle _{\kappa }$-families) to the ones in $\langle E'\rangle _{\kappa }$.
\end{definition}

By taking sheaves on $\kappa $-sites we get $\kappa $-toposes:

\begin{definition}
    ($\kappa $ is an infinite regular cardinal.) A Grothendieck-topos is said to be a $\kappa $-topos if it is an $(\infty ,\kappa )$-coherent category. A morphism of $\kappa $-toposes is a geometric morphism whose inverse image is $\kappa $-lex.
\end{definition}

We recall the following results from \cite{presheaftype}, mainly to fix the notation:

\begin{theorem}
    If $(\mathcal{C},E)$ is a $\kappa $-site, then $Sh(\mathcal{C},\langle E\rangle _{\kappa })$ is a $\kappa $-topos.

    If $F:(\mathcal{C},E)\to (\mathcal{D},E')$ is a morphism of $\kappa $-sites, then the induced geometric morphism is a morphism of $\kappa $-toposes. We will write $F_*$ for the direct image and $F^*$ for the inverse image map.

    $Y:\mathcal{C}\to \mathbf{Set}^{\mathcal{C}^{op}}$ will denote the Yoneda-embedding and $\# :\mathbf{Set}^{\mathcal{C}^{op}}\to Sh(\mathcal{C},\langle E\rangle _{\kappa })$ will stand for the sheafification map. It follows that $\# $ is $\kappa $-lex, moreover also $\# Y:\mathcal{C}\to Sh(\mathcal{C},\langle E\rangle _{\kappa })$ is $\kappa $-lex, and it turns $E$-families to (effective) epimorphic ones. In what follows we will refer to such functors either as "models" or as "$\kappa $-lex $E$-preserving". We will often write $\widehat{x}$ instead of $\#Y(x)$.

    Given a $\kappa $-topos $\mathcal{E}$, precomposition with $\# Y$ yields an equivalence between $Sh(\mathcal{C},\langle E\rangle _{\kappa })\to \mathcal{E}$ $\kappa $-lex cocontinuous functors (equivalently~$\mathcal{E}\to Sh(\mathcal{C})$ $\kappa $-topos maps), and $\mathcal{C}\to \mathcal{E}$ $\kappa $-lex $E$-preserving functors. 

    To avoid the overuse of $*$, we write $(\#Y)^{\circ }$ for this precomposition functor. More generally, in any context $f^{\circ }$ will denote precomposition with $f$ and $f_{\circ}$ will stand for postcomposition. We shall write $f^{-1}$ for the map "pulling back along $f$", and when $f$ happens to be invertible, its inverse will be called $\overline{f}$.
\end{theorem}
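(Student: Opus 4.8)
The plan is to reduce the whole package to a single technical input, namely that sheafification $\#$ for a $\kappa$-topology is $\kappa$-lex; everything else then follows either formally or from standard topos theory adapted to the infinitary setting. I would compute $\#$ through the plus-construction, $P^{+}(x)=\operatorname{colim}_{S}\operatorname{Match}(S,P)$, where the colimit ranges over the poset of $\langle E\rangle_{\kappa}$-covering sieves on $x$ ordered by reverse inclusion and $\operatorname{Match}(S,P)$ is the set of matching families. Since $\kappa$-filtered colimits commute with $\kappa$-small limits in $\mathbf{Set}$ (hence pointwise in presheaves), it suffices that this poset is $\kappa$-filtered, i.e.\ that the common refinement of $<\kappa$ covering sieves is again covering. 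This is exactly where the defining property of a $\kappa$-topology is used: iteratively refining a cover by $<\kappa$ further covers is precisely the formation of a cotree of height $<\kappa$ with continuous, $<\kappa$ branches, so closure under Definition \ref{lkcoherent} iv) guarantees that the refinement remains a cover. As $\langle E\rangle_{\kappa}$ is built by closing $E$ under pullbacks and then under such cotrees, it is a $\kappa$-topology, and $\#\simeq(-)^{++}$ is $\kappa$-lex.

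Granting $\kappa$-lex $\#$, the first assertion is immediate: $Sh(\mathcal{C},\langle E\rangle_{\kappa})$ is a reflective subcategory of the presheaf topos whose reflector preserves $\kappa$-small limits, so $\kappa$-small limits exist and are computed as in presheaves, and it is a Grothendieck topos, hence has pullback-stable effective epi--mono factorizations and pullback-stable arbitrary unions. The only clause of $(\infty,\kappa)$-coherence that is not automatic is the cotree property (iv); I would transfer it from the site, using that a family of sheaf maps is (jointly) epic iff it is an $\langle E\rangle_{\kappa}$-cover and that $\#$ is $\kappa$-lex and cocontinuous, so that the cotree-closure of $\langle E\rangle_{\kappa}$ becomes precisely clause (iv) for $Sh$.

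For a morphism of sites $F$ I would set $F_{*}$ to be restriction along $F^{op}$ (which lands in sheaves because $F$ carries covers to covers) and $F^{*}$ to be the sheafification of left Kan extension along $F$; $\kappa$-lex-ness of $F$ together with $\kappa$-lex $\#$ makes $F^{*}$ preserve $\kappa$-small limits, so the induced geometric morphism is a $\kappa$-topos map. For $\#Y$: the Yoneda embedding preserves all limits and $\#$ is $\kappa$-lex, so $\#Y$ is $\kappa$-lex; and since the representables of an $E$-family generate a covering sieve, their images form an epic, hence (in a topos) effective epic, family. Finally, for the universal property I would factor through two well-understood universal properties. The presheaf category is the free cocompletion, and among cocontinuous functors $\mathbf{Set}^{\mathcal{C}^{op}}\to\mathcal{E}$ the $\kappa$-lex ones correspond under precomposition with $Y$ to $\kappa$-lex functors $\mathcal{C}\to\mathcal{E}$ (the $\kappa$-flatness correspondence, using that $\mathcal{C}$ is $\kappa$-lex). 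Sheafification is a localization, so precomposition with $\#$ identifies $\kappa$-lex cocontinuous functors $Sh\to\mathcal{E}$ with those $\kappa$-lex cocontinuous functors on presheaves that invert the covering-sieve inclusions, i.e.\ that send $E$-families to epic families. Composing the two identifications yields the stated equivalence, and a $\kappa$-lex cocontinuous functor out of $Sh$ is the same datum as the inverse image of a $\kappa$-topos map into $Sh$.

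The single genuinely hard step is the $\kappa$-lex-ness of sheafification in the first paragraph; once it is in place, the topos-structure of $Sh$, the behaviour of site morphisms and of $\#Y$, and the universal property are all formal or standard. The delicate point within that step is the equivalence ``cotree-closed topology $\Leftrightarrow$ $\kappa$-filtered system of covering sieves'', and, in the variant where one must verify that a concrete collection $E$ already generates a $\kappa$-topology (as in Example \ref{ktopolsubcanon}), controlling the number of branches of the cotrees one builds — which is exactly where weak compactness of $\kappa$ would enter.
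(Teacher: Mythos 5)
You should first note a structural point: the paper does not prove this statement at all --- it is recalled from \cite{presheaftype} expressly ``to fix the notation'' --- so there is no internal proof to compare against line by line. Judged on its own terms, your outline follows what is indeed the standard route for this result in the infinitary setting: the crux is $\kappa$-lexness of sheafification, obtained from the plus-construction once the poset of $\langle E\rangle_{\kappa}$-covering sieves on each object is shown to be $\kappa$-filtered, and your identification of a $<\kappa$-fold common refinement with a cotree as in Definition \ref{lkcoherent} iv) is exactly the right mechanism (successor steps refine by pulled-back sieves, limit steps use continuity of branches, and closure of sieves under precomposition places every transfinite cocomposition in the intersection $\bigcap_{i}S_i$, whence the intersection contains a cover and is itself covering). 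Your closing remark is also accurate: no weak compactness enters here, because a $\kappa$-topology bounds only the length of branches and not the width of covers; weak compactness is needed only to verify that a concrete $E$ (as in Example \ref{ktopolsubcanon}) is already a $\kappa$-topology.

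Two steps are lighter than you present them. First, clause iv) of $(\infty,\kappa)$-coherence for $Sh(\mathcal{C},\langle E\rangle_{\kappa})$ is not a formal transfer: your phrase ``a family of sheaf maps is jointly epic iff it is an $\langle E\rangle_{\kappa}$-cover'' only typechecks for families over representables, whereas the vertices of a cotree in $Sh$ are arbitrary sheaves. For general sheaves, jointly epic means locally surjective, and one must chase a section of the root through the cotree, building at each stage a refining cover in $\mathcal{C}$ together with lifts, and gluing the lifted sections at limit stages (using that branch limits of sheaves are computed pointwise); only then does closure of $\langle E\rangle_{\kappa}$ under cotrees conclude. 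This is a genuine transfinite argument, comparable in substance to the sheafification step, not a consequence of cocontinuity of $\#$. Second, your $\kappa$-flatness correspondence in the universal property needs $\kappa$-filtered colimits to commute with $\kappa$-small limits in the target $\mathcal{E}$. This holds in any $Sh(\mathcal{D},\langle E'\rangle_{\kappa})$ by the $\kappa$-lexness of sheafification you just established (colimits of sheaves are sheafified presheaf colimits, and the commutation holds pointwise in presheaves), but the theorem quantifies over an abstract $\kappa$-topos, defined here as an $(\infty,\kappa)$-coherent Grothendieck topos; for such an $\mathcal{E}$ the commutation requires either a Giraud-type representation of $\mathcal{E}$ as sheaves on a $\kappa$-site or a separate argument, which your sketch silently assumes. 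Neither point is fatal --- both are fixable along the lines indicated, and with them supplied your proof coincides with the expected one --- but as written they are gaps rather than ``formal or standard'' steps.
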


\section{The "method of diagrams" categorically}

In model theory the "method of diagrams" means the following: starting with a model $M$ of a theory $T\subseteq L_{\omega \omega}$, one first extends the signature by adding constant symbols corresponding to the elements of $M$ (which yields $L_M\supseteq L$), then extends $T$ by adding all closed $L_M$-formulas which are valid in $M$. The resulting theory $Diag(M)\subseteq (L_M)_{\omega \omega }$ is the diagram of $M$. Of course, one could also do this inside the coherent (or any other) fragment, that is, to start with $T\subseteq L_{\omega \omega}^g$ and add only coherent formulas with parameters from $M$, to obtain the positive diagram $Diag^+(M)\subseteq (L_M)_{\omega \omega }^g$.

As a result, a model of $Diag(M)$ is the same as a model of $T$ (say $N$), together with an elementary embedding $M\to N$, obtained from the interpretation of the new constant symbols. Similarly, a model of $Diag^+(M)$ is the same as a model $N$ of $T$, together with a homomorphism $M\to N$. 

The "main facts" of categorical logic say that coherent theories, models and homomorphisms are the same as coherent categories, ($\mathbf{Set}$-valued) coherent functors and natural transformations (cf.~\cite{makkai}). So by going back-and-forth, one gets that given a coherent functor $M:\mathcal{C}\to \mathbf{Set}$, there exists a coherent category $\mathcal{C}_M$, such that the category of $\mathcal{C}_M\to \mathbf{Set}$ coherent functors is equivalent to the coslice category $M\downarrow \mathbf{Coh}(\mathcal{C},\mathbf{Set})$.

Our philosophy is that theorems in categorical logic should have a model-theoretic intuition and a purely category-theoretic proof. First, because such proofs are cleaner, especially in the infinite-quantifier setup. Second, because they can lead to natural generalizations, which are not visible from the logic side. 

The above mentioned theorem (formulated with lex categories and sites, rather than coherent categories) is proved as part of \cite[Proposition 2.1.]{reprGT}. We prove it (for $\kappa $-lex categories and $\kappa $-sites) as Corollary \ref{Dlocsmall}. Their proof is based on the same idea as ours (we called it "Yoneda for slices", see the introduction), but there are some essential differences. 

Mainly, our colimit (of slices) is a 2-categorical filtered colimit, the colimit in \cite{reprGT} is a 1-categorical sequential colimit. This way we can escape a strictification process (forcing pullbacks to be determined up to equality), as well as a transfinite construction, involving an enumeration of the elements in the image of the given functor $M$.

More importantly, the fact that our colimit has a canonical indexing diagram $(\int M)^{op}$, makes $M\mapsto \mathcal{C}_M$ functorial, which in particular allows us to prove that $\mathbf{Lex}_{\kappa }(\mathcal{C},\mathbf{Set})$ is a full coreflective subcategory in $\mathcal{C}\downarrow \mathbf{Lex}_{\kappa }^{\sim }$, the (2,1)-category   of $\kappa $-lex functors out of $\mathcal{C}$ with small codomain, see Theorem \ref{addingconst}. It also leads to an explicit description of $\mathcal{C}_M$ which will often be useful. 

\begin{definition}
    ($\kappa $ is an infinite regular cardinal.) Write $\mathbf{Lex}^{\sim }_{\kappa }$ for the (2,1)-category of small $\kappa $-lex categories, $\kappa $-lex functors and natural isomorphisms. 
    
    Given a (small) $\kappa $-lex category $\mathcal{C}$, we write $\mathcal{C}\downarrow \mathbf{Lex}_{\kappa }^{\sim }$ for the corresponding coslice: its objects are $\mathcal{C}\to \mathcal{D}$ $\kappa $-lex functors, a 1-cell from $M:\mathcal{C}\to \mathcal{D}$ to $N:\mathcal{C}\to \mathcal{D}'$ is a $\kappa $-lex functor $F:\mathcal{D}\to \mathcal{D}'$ together with a natural isomorphism $\nu _F: FM\Rightarrow N$, and a 2-cell from $(F,\nu _F)$ to $(G,\nu _G)$ is a natural isomorphism $\theta :F\Rightarrow G$, making

\[\begin{tikzcd}
	&&& {\mathcal{D}} \\
	{\mathcal{C}} \\
	&&& {\mathcal{D}'}
	\arrow[""{name=0, anchor=center, inner sep=0}, "F"{description}, curve={height=12pt}, from=1-4, to=3-4]
	\arrow[""{name=1, anchor=center, inner sep=0}, "G"{description}, curve={height=-12pt}, from=1-4, to=3-4]
	\arrow[""{name=2, anchor=center, inner sep=0}, "M", from=2-1, to=1-4]
	\arrow[""{name=3, anchor=center, inner sep=0}, "N"', from=2-1, to=3-4]
	\arrow["\theta", shorten <=5pt, shorten >=5pt, Rightarrow, from=0, to=1]
	\arrow["{\nu _F}"', curve={height=6pt}, shorten <=5pt, shorten >=5pt, Rightarrow, from=2, to=3]
	\arrow["{\nu _G}", curve={height=-6pt}, shorten <=5pt, shorten >=5pt, Rightarrow, from=2, to=3]
\end{tikzcd}\]    
    commute (i.e.~$\nu _G\circ (\theta M) =\nu _F$).
\end{definition}

\begin{theorem}
\label{delta}
     Take $\mathcal{C}$, $\mathcal{D}$, and $M:\mathcal{C}\to \mathcal{D}$ in $\mathbf{Lex}^{\sim }_{\kappa } $, and choose $x\in \mathcal{C}$. Then we have an equivalence of groupoids:
     
    \[
    \mathcal{C}\downarrow \mathbf{Lex}^{\sim }_{\kappa } \left(
 \mathcal{C}\xrightarrow{!^{-1}} \faktor{\mathcal{C}}{x}, \ \mathcal{C}\xrightarrow{M}\mathcal{D}\right) \ \ \xrightarrow{\delta } \ \ \mathcal{D}(M(1),M(x))
    \]
Here $!:x\to 1$ is the unique map to the terminal object and $!^{-1}$ is the functor "pulling back along $!$", so it sends $f:y\to z$ to $f\times id_x: y\times x\to z\times x$. The product objects are chosen arbitrarily, except that for notational convenience we assume $1\times x=x$. The set $\mathcal{D}(M(1),M(x))$ is seen as a discrete groupoid. The map $\delta $ takes a 1-cell 

\[\begin{tikzcd}
	&& {\faktor{\mathcal{C}}{x}} \\
	{\mathcal{C}} \\
	&& {\mathcal{D}}
	\arrow[""{name=0, anchor=center, inner sep=0}, "{!^{-1}}", from=2-1, to=1-3]
	\arrow[""{name=1, anchor=center, inner sep=0}, "M"', from=2-1, to=3-3]
	\arrow["F", from=1-3, to=3-3]
	\arrow["{\nu _F}", "{\cong }"', shift left=4, shorten <=4pt, shorten >=4pt, Rightarrow, from=0, to=1]
 \tag{$*$}
\end{tikzcd}\]
to $\nu _{F,x} \circ F(\Delta _x) \circ \overline{\nu _{F,1}}:M(1)\to M(x)$ with $\Delta _x$ being the map
\[\begin{tikzcd}
	x && {x\times x} \\
	& x
	\arrow["{\Delta _x = \langle id_x ,id_x \rangle }", from=1-1, to=1-3]
	\arrow["{id_x}"', from=1-1, to=2-2]
	\arrow["{\pi _2}", from=1-3, to=2-2]
\end{tikzcd}\]
    
\end{theorem}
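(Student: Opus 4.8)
The plan is to exploit what the introduction calls ``Yoneda for slices'': the category $\faktor{\mathcal{C}}{x}$ is freely generated over $\mathcal{C}$, among $\kappa$-lex categories, by a single generic global section of $x$, namely $\Delta_x$. The engine of the proof is the observation that $!^{-1}(1)=id_x$ is the terminal object of $\faktor{\mathcal{C}}{x}$, and that every object $a\colon y\to x$ arises as a canonical pullback
\[
a\;\cong\;!^{-1}(1)\times_{!^{-1}(x)}!^{-1}(y),
\]
formed along $\Delta_x\colon !^{-1}(1)\to !^{-1}(x)$ and $!^{-1}(a)\colon !^{-1}(y)\to !^{-1}(x)$; one checks directly that the pullback in $\mathcal{C}$ of $\Delta_x$ and $a\times id_x$ is $y$, with structure map $a$. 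Morphisms $h\colon a\to b$ are compatible with these decompositions ($p_1$ is the map to the terminal object, and $p_2^b\circ h=!^{-1}(h)\circ p_2^a$). Consequently any $\kappa$-lex $F\colon\faktor{\mathcal{C}}{x}\to\mathcal{D}$ is determined, as a functor, by the composite $F\circ !^{-1}$ together with the single arrow $F(\Delta_x)$.

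For essential surjectivity, given $g\colon M(1)\to M(x)$ I would produce an explicit preimage. Let $M_x\colon\faktor{\mathcal{C}}{x}\to\faktor{\mathcal{D}}{M(x)}$ be the $\kappa$-lex functor induced by $M$ (sending $a\colon y\to x$ to $M(a)\colon M(y)\to M(x)$), and let $g^{*}\colon\faktor{\mathcal{D}}{M(x)}\to\faktor{\mathcal{D}}{M(1)}\simeq\mathcal{D}$ be pullback along the global section $g$. Being a right adjoint, $g^{*}$ is $\kappa$-lex, so $F_g:=g^{*}\circ M_x$ is $\kappa$-lex; the isomorphisms $g^{*}\big(\pi_2\colon M(w)\times M(x)\to M(x)\big)\cong M(w)\times M(1)\cong M(w)$ assemble into a natural iso $\nu_{F_g}\colon F_g\circ !^{-1}\cong M$. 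A direct computation---using that $M(1)$ is terminal, so $\nu_{F_g,x}$ becomes the projection and $F_g(\Delta_x)=\langle id,g\rangle$ under these identifications---gives $\delta(F_g,\nu_{F_g})=g$.

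It remains to see that $\delta$ is fully faithful, i.e.\ that between two $1$-cells $(F,\nu_F)$ and $(G,\nu_G)$ there is exactly one $2$-cell when $\delta(F,\nu_F)=\delta(G,\nu_G)$ and none otherwise. A $2$-cell $\theta$ must satisfy $\nu_{G,w}\circ\theta_{!^{-1}(w)}=\nu_{F,w}$, which forces $\theta_{!^{-1}(w)}=\overline{\nu_{G,w}}\circ\nu_{F,w}$; naturality of $\theta$ at $\Delta_x$ then rearranges precisely into the equation $\delta(F,\nu_F)=\delta(G,\nu_G)$. This already shows that distinct $\delta$-values admit no $2$-cell, and that $\theta$ is pinned down on the image of $!^{-1}$; uniqueness for general $a$ follows because the naturality squares of $\theta$ at the two projections $p_1,p_2$, together with $F$ and $G$ preserving the defining pullback, exhibit $\theta_a$ as the unique induced map between the pullbacks. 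For existence when $\delta(F,\nu_F)=\delta(G,\nu_G)$, set $\theta^{0}:=\overline{\nu_G}\circ\nu_F\colon F\circ !^{-1}\Rightarrow G\circ !^{-1}$; the hypothesis on $\delta$ is exactly the assertion that $\theta^{0}$ is natural also at $\Delta_x$, and I would extend $\theta^{0}$ to $\theta\colon F\Rightarrow G$ by declaring $\theta_a$ to be the map induced on pullbacks by $\theta^{0}_{1}\circ F(p_1)$ and $\theta^{0}_{y}\circ F(p_2)$.

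I expect the main obstacle to be the coherence bookkeeping in this last step: checking that the cone defining $\theta_a$ is compatible (this uses naturality of $\theta^{0}$ at both $\Delta_x$ and $!^{-1}(a)$ together with commutativity of the pullback square), that the resulting $\theta$ is natural for all morphisms of $\faktor{\mathcal{C}}{x}$ and not merely those in the image of $!^{-1}$, and---the subtlest point---that $\theta$ restricts back to $\theta^{0}$ on the image of $!^{-1}$, so that it is genuinely a $2$-cell. The difficulty is that the pullback decomposition of an object $!^{-1}(w)$ passes through $!^{-1}(w\times x)$ rather than through $!^{-1}(w)$ directly, so this last compatibility is not literally the naturality of $\theta^{0}$ but has to be extracted from the $\kappa$-lex structure; the cleanest way to dispatch it is to compare every $(F,\nu_F)$ with the explicit model $(F_g,\nu_{F_g})$ from the essential-surjectivity step, where all the identifications are concrete, and then obtain the $2$-cell $(F,\nu_F)\Rightarrow(G,\nu_G)$ by composing through $(F_g,\nu_{F_g})$ in the groupoid.
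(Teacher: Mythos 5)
Your proposal is correct and follows essentially the same route as the paper: your $F_g=g^*\circ M_x$ is exactly the paper's quasi-inverse $F_a$ (the fiber of the $M$-image over $a$, with your right-adjoint observation being a slightly slicker way to see it is $\kappa$-lex), your pullback decomposition $a\cong !^{-1}(1)\times_{!^{-1}(x)}!^{-1}(y)$ along $\Delta_x$ is the very square the paper uses, and your final move of producing the $2$-cell by composing through the explicit model is precisely the paper's construction of the unique $\theta:F\Rightarrow F_{\delta(F)}$ with $\nu_{F_{\delta(F)}}\circ(\theta !^{-1})=\nu_F$. The subtleties you flag (naturality beyond the image of $!^{-1}$, and the restriction of $\theta$ back to $\theta^0$) are likewise left as routine verifications in the paper, so nothing is missing relative to its level of detail.
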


\begin{proof}
    This is a functor, i.e.~given a 1-cell $\alpha :F\Rightarrow G$ in the Hom-groupoid we have $\delta (F,\nu _F) = \delta (G,\nu _G)$, by the commutativity of

\[\begin{tikzcd}
	{F(id_x)} && {F(\pi _2)} \\
	& {G(id_x)} && {G(\pi _2)} \\
	{M(1)} && {M(x)}
	\arrow["{F(\Delta _x)}", from=1-1, to=1-3]
	\arrow["{G(\Delta _x)}"{pos=0.2}, from=2-2, to=2-4]
	\arrow["{\alpha }"', from=1-1, to=2-2]
	\arrow["{\alpha }", from=1-3, to=2-4]
	\arrow["{\nu _{F,1}}"', from=1-1, to=3-1]
	\arrow["{\nu _{G,1}}", from=2-2, to=3-1]
	\arrow["{\nu _{F,x}}"'{pos=0.8}, from=1-3, to=3-3]
	\arrow["{\nu_{G,x}}", from=2-4, to=3-3]
\end{tikzcd}\]

We define the quasi-inverse $F_{-}$ of $\delta $ as follows. Given $a:1=M(1)\to M(x)$ let $F_a$ be the following functor: it takes a map 
\[\begin{tikzcd}
	y && z \\
	& x
	\arrow["h"', from=1-1, to=2-2]
	\arrow["k", from=1-3, to=2-2]
	\arrow["f", from=1-1, to=1-3]
\end{tikzcd}\]
to the action of its $M$-image on the fiber over $a$, i.e.~to

\[
\adjustbox{scale=0.9}{
\begin{tikzcd}
	{F_{a}(h)} && {M(y)} \\
	& pb \\
	{F_{a}(k)} && {M(z)} \\
	& pb \\
	1 && {M(x)}
	\arrow["{M(h)}"{description, pos=0.3}, curve={height=-30pt}, from=1-3, to=5-3]
	\arrow["a"', from=5-1, to=5-3]
	\arrow[curve={height=30pt}, from=1-1, to=5-1]
	\arrow[from=1-1, to=1-3]
	\arrow["{M(k)}"{description}, from=3-3, to=5-3]
	\arrow["{M(f)}"{description}, from=1-3, to=3-3]
	\arrow[from=3-1, to=5-1]
	\arrow[from=3-1, to=3-3]
	\arrow["{F_{a}(f)}"{description}, dashed, from=1-1, to=3-1]
\end{tikzcd}
}
\]

Of course, whenever a functor arises from taking pullbacks, it means that we fix arbitrary choices on the object level, and arrows are sent to the corresponding unique induced maps. This functor $F_a$ is $\kappa $-lex, because products are wide pullbacks over $x$, this is preserved by $M$ and it is pulled back to a wide pullback over $1$, i.e.~to a product. Similarly, the terminal object and equalizers are preserved. 

$F_a\circ !^{-1}$ is isomorphic to $M$: since $M$ preserves products and pullbacks 
we get

\[
\adjustbox{scale=0.9}{
\begin{tikzcd}
	{M(y)=M(y)\times 1} && {M(y\times x)} \\
	& pb \\
	{M(z)=M(z)\times 1} && {M(z\times x)} \\
	& pb \\
	1 && {M(x)}
	\arrow["a", from=5-1, to=5-3]
	\arrow["{M(\pi _2)}", from=3-3, to=5-3]
	\arrow["{M(f\times id_x)}", from=1-3, to=3-3]
	\arrow["{ id_{M(z)} \times a}", from=3-1, to=3-3]
	\arrow[from=3-1, to=5-1]
	\arrow["{id_{M(y)} \times a}", from=1-1, to=1-3]
	\arrow["{M(f)}"', from=1-1, to=3-1]
\end{tikzcd}
}
\]
As we may have chosen a different pullback for $M(y\times x)\xrightarrow{M(\pi _2)} M(x)\xleftarrow{a}1$ when we defined $F_a(!^{-1}y)$, the comparison maps yield an isomorphism in

\[
\adjustbox{scale=0.87}{
\begin{tikzcd}
	&& {\faktor{\mathcal{C}}{x}} \\
	{\mathcal{C}} \\
	&& {\mathcal{D}}
	\arrow[""{name=0, anchor=center, inner sep=0}, "{!^{-1}}", from=2-1, to=1-3]
	\arrow[""{name=1, anchor=center, inner sep=0}, "M"', from=2-1, to=3-3]
	\arrow["{F_a}", from=1-3, to=3-3]
	\arrow["{\nu _{F_a}}", shorten <=4pt, shorten >=4pt, Rightarrow, from=0, to=1]
\end{tikzcd}
}
\]

To see that $\delta \circ F_{-}$ equals the identity, we need that given $a:M(1)\to M(x)$, it equals $M(1)\xrightarrow{\overline{\nu _{{F_a},1}}}F_a(id_x)\xrightarrow{F_a(\Delta _x)}F_a(\pi _2)\xrightarrow{\nu _{{F_a},x}}M(x)$. This follows, as $F_a$ evaluated on $\Delta _x$ is the fiber of $M(\Delta _x)$ over $a$, which is $a$ itself (after identifying the pullback objects via $\nu _{F_a}$):

\[
\adjustbox{scale=0.9}{
\begin{tikzcd}
	{M(1)} && {M(x)} \\
	& pb \\
	{M(x)} && {M(x\times x)} \\
	& pb \\
	{M(1)} && {M(x)}
	\arrow["{M(\pi _2)}", from=3-3, to=5-3]
	\arrow["a"', from=5-1, to=5-3]
	\arrow[from=3-1, to=5-1]
	\arrow["{id_{M(x)}\times a}", from=3-1, to=3-3]
	\arrow["{M(\Delta _x)}", from=1-3, to=3-3]
	\arrow["a"', from=1-1, to=3-1]
	\arrow["a", from=1-1, to=1-3]
\end{tikzcd}
}
\]

Conversely, given an extension $(F,\nu _F)$ as in $(*)$, there exists a unique natural isomorphism $\theta $ from $F$ to $F_{\delta (F)}$ satisfying $\nu _{F_{\delta (F)}} \circ (\theta !^{-1})=\nu _F$. This follows, as
\[\begin{tikzcd}
	y && {y\times x} \\
	x && {x\times x} \\
	& x
	\arrow["{\langle id_y,h\rangle}", from=1-1, to=1-3]
	\arrow["h"', from=1-1, to=2-1]
	\arrow["h"{description, pos=0.7}, from=1-1, to=3-2]
	\arrow["{h\times id_x}", from=1-3, to=2-3]
	\arrow["{\pi _2}"{description, pos=0.7}, from=1-3, to=3-2]
	\arrow["{\Delta _x}", from=2-1, to=2-3]
	\arrow["{id_x}"', from=2-1, to=3-2]
	\arrow["{\pi _2}", from=2-3, to=3-2]
\end{tikzcd}\]
is a pullback in $\faktor{\mathcal{C}}{x}$, which the lex extension $F$ preserves, hence in
\[\begin{tikzcd}
	{F(h)} && {F(!^{-1}y)} \\
	& {F_{\delta (F)}} && {M(y)} \\
	{F(!^{-1}1)} && {F(!^{-1}x)} \\
	& {M(1)} && {M(x)}
	\arrow[from=1-1, to=1-3]
	\arrow["{\theta _h}"', dashed, from=1-1, to=2-2]
	\arrow[from=1-1, to=3-1]
	\arrow["{\nu _{F,y}}", from=1-3, to=2-4]
	\arrow["{F(!^{-1}h)}"'{pos=0.2}, from=1-3, to=3-3]
	\arrow[from=2-2, to=2-4]
	\arrow[from=2-2, to=4-2]
	\arrow["{M(h)}", from=2-4, to=4-4]
	\arrow["{F(\Delta _x)}"{pos=0.2}, from=3-1, to=3-3]
	\arrow["{\nu _{F,1}}"', from=3-1, to=4-2]
	\arrow["{\nu _{F,x}}", from=3-3, to=4-4]
	\arrow["{\delta (F)}"', from=4-2, to=4-4]
\end{tikzcd}\]
both the front and the back face is a pullback, and the induced map $\theta _h$ is an isomorphism.

\end{proof}

\begin{proposition}
    $\delta $ is natural both in $x$ and in $M$.
\end{proposition}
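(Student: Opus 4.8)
My plan is to regard both legs of the equivalence in Theorem \ref{delta} as the object-parts of functors in the two variables $x$ and $M$, and to reduce the proposition to the commutativity of one naturality square per variable. The first task is therefore to pin down the functorial action on each side. On the right-hand side $\mathcal{D}(M(1),M(x))$ a morphism $g:x\to x'$ in $\mathcal{C}$ acts by postcomposition $M(g)_{\circ}$, and a $1$-cell $(H,\nu_H):M\to M'$ in $\mathcal{C}\downarrow\mathbf{Lex}^{\sim}_{\kappa}$ acts by $a\mapsto \nu_{H,x}\circ H(a)\circ\overline{\nu_{H,1}}$. On the left-hand side a $1$-cell $(H,\nu_H)$ acts by postcomposition, sending $(F,\nu_F)$ to $(HF,\nu_H\circ H\nu_F)$, while $g:x\to x'$ acts by precomposition with a suitable $1$-cell $!_{x'}^{-1}\to !_x^{-1}$, which I construct below. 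A preliminary sanity check is that both variables come out covariant, so that the square really has $\delta$ along the top and bottom.

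I would dispatch naturality in $M$ first, as I expect it to be routine. The point is simply that the componentwise value of the composite isomorphism $\nu_H\circ H\nu_F$ at $1$ and at $x$ is $\nu_{H,1}\circ H(\nu_{F,1})$ and $\nu_{H,x}\circ H(\nu_{F,x})$ respectively, so both ways around the square send $(F,\nu_F)$ to
\[
\nu_{H,x}\circ H(\nu_{F,x})\circ HF(\Delta_x)\circ H(\overline{\nu_{F,1}})\circ\overline{\nu_{H,1}},
\]
the down-then-right route by plain functoriality of $H$. Hence this square commutes strictly, with no choices to track.

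The substantive work is naturality in $x$. Pulling back along $g$ gives a functor $g^{-1}:\faktor{\mathcal{C}}{x'}\to\faktor{\mathcal{C}}{x}$, which is $\kappa$-lex since it is a right adjoint; because $!_{x'}\circ g=!_x$ there is a canonical isomorphism $\iota:g^{-1}\circ !_{x'}^{-1}\Rightarrow !_x^{-1}$ coming from the chosen pullbacks, so $(g^{-1},\iota)$ is a $1$-cell $!_{x'}^{-1}\to !_x^{-1}$ and precomposition $(g^{-1},\iota)^{\circ}$ sends $(F,\nu_F)$ to $(Fg^{-1},\nu_F\circ F\iota)$. The whole computation then rests on one geometric identity: up to the coherence isomorphism $\iota$,
\[
g^{-1}(\Delta_{x'})=!_x^{-1}(g)\circ\Delta_x .
\]
I would prove it by noting $g^{-1}$ sends the terminal object $(x',id_{x'})$ to $(x,id_x)$ and $!_{x'}^{-1}(x')=(x'\times x',\pi_2)$ to $(x'\times x,\pi_2)$, and then reading off from the universal property of the pullback that $g^{-1}(\Delta_{x'})=\langle g,id_x\rangle$, which factors as $(g\times id_x)\circ\Delta_x=!_x^{-1}(g)\circ\Delta_x$. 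Applying $F$, absorbing the coherence isos $F\iota$ at $1$ and $x'$, and invoking naturality of $\nu_F$ at the arrow $g$ (that is, $\nu_{F,x'}\circ F(!_x^{-1}(g))=M(g)\circ\nu_{F,x}$) gives
\[
\delta(Fg^{-1},\nu_F\circ F\iota)=\nu_{F,x'}\circ F(!_x^{-1}(g))\circ F(\Delta_x)\circ\overline{\nu_{F,1}}=M(g)\circ\delta(F,\nu_F),
\]
which is precisely the top-then-right route.

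The main obstacle I anticipate is not any single estimate but the bookkeeping in the $x$-variable: one must correctly identify the action as precomposition with the pullback functor $g^{-1}$ (so the variances match), and carefully carry the coherence isomorphism $\iota$ through the arbitrarily chosen products and pullbacks used both to define $F_a$ in Theorem \ref{delta} and to evaluate $\delta$. Once the identity $g^{-1}(\Delta_{x'})=!_x^{-1}(g)\circ\Delta_x$ is established and these choices are reconciled via $\iota$, the remaining verification is the direct diagram chase displayed above.
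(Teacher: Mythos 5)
Your proof is correct and takes essentially the same route as the paper: naturality in $M$ is the same functoriality-of-$H$ chase yielding the composite $\nu_{H,x}\circ H(\nu_{F,x})\circ HF(\Delta_x)\circ H(\overline{\nu_{F,1}})\circ\overline{\nu_{H,1}}$, and naturality in $x$ rests on the identical key identity $g^{-1}(\Delta_{x'})=\langle g,id_x\rangle=!_x^{-1}(g)\circ\Delta_x$ in $\faktor{\mathcal{C}}{x}$ combined with naturality of $\nu_F$ at $g$. Your explicit bookkeeping of the coherence isomorphism $\iota$ merely spells out what the paper absorbs into its arbitrary choices of pullbacks.
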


\begin{proof}
    In $x$: given $f:x\to y$ we need the commutativity of

\[\begin{tikzcd}
	{\mathcal{C}\downarrow \mathbf{Lex}^{\sim }_{\kappa } (\mathcal{C}\to \faktor{\mathcal{C}}{x}, \mathcal{C}\xrightarrow{M}\mathcal{D})} && {\mathcal{D}(M(1),M(x))} \\
	& {} \\
	{\mathcal{C}\downarrow \mathbf{Lex}^{\sim }_{\kappa } (\mathcal{C}\to \faktor{\mathcal{C}}{y}, \mathcal{C}\xrightarrow{M}\mathcal{D})} && {\mathcal{D}(M(1),M(y))}
	\arrow["{-\circ f^{-1}}", from=1-1, to=3-1]
	\arrow["{\delta _{x,M}}", from=1-1, to=1-3]
	\arrow["{M(f)\circ -}", from=1-3, to=3-3]
	\arrow["{\delta _{y,M}}", from=3-1, to=3-3]
\end{tikzcd}\]
Let $F:\faktor{\mathcal{C}}{x}\to \mathcal{D}$ be an extension and apply it to the following commutative square in $\faktor{\mathcal{C}}{x}$
\[\begin{tikzcd}
	& {y\times x} && {x\times x} \\
	x &&&& x \\
	&& x
	\arrow[Rightarrow, no head, from=2-1, to=2-5]
	\arrow["{id_x}"{description}, from=2-5, to=3-3]
	\arrow["{id_x}"{description}, from=2-1, to=3-3]
	\arrow["{\langle f,id_x\rangle=f^{-1}(\Delta _y)}", from=2-1, to=1-2]
	\arrow["\Delta _x"', from=2-5, to=1-4]
	\arrow["{f\times id_x}"', from=1-4, to=1-2]
	\arrow["{\pi _2^{y\times x}}"{description, pos=0.3}, from=1-2, to=3-3]
	\arrow["{\pi _2^{x\times x}}"{description, pos=0.3}, from=1-4, to=3-3]
\end{tikzcd}\]
to get the diagram below in which the two composites $M(1)\to M(y)$ are equal

\[\begin{tikzcd}
	&& {M(y)} && {M(x)} \\
	{M(1)} && {F(\pi _2^{y\times x})} && {F(\pi _2^{x\times x})} \\
	{F(id_x)} &&& {F(id_x)}
	\arrow["{M(f)}"', from=1-5, to=1-3]
	\arrow["{\nu _{F,y}}", from=2-3, to=1-3]
	\arrow["{\nu _{F,x}}"', from=2-5, to=1-5]
	\arrow["{F(f\times id_x)}"', from=2-5, to=2-3]
	\arrow["{\nu _{F,1}}", from=3-1, to=2-1]
	\arrow["{F(f^{-1}\Delta _y)}"{description}, from=3-1, to=2-3]
	\arrow[Rightarrow, no head, from=3-1, to=3-4]
	\arrow["{F(\Delta _x)}"', from=3-4, to=2-5]
\end{tikzcd}\]

In $M$: Given a 1-cell $(H,\nu _H):(\mathcal{C}\xrightarrow{M}\mathcal{D})\to (\mathcal{C}\xrightarrow{N}\mathcal{D}')$, we have to show the commutativity of

\[\begin{tikzcd}
	{\mathcal{C}\downarrow \mathbf{Lex}^{\sim }_{\kappa } (\mathcal{C}\to \faktor{\mathcal{C}}{x}, \mathcal{C}\xrightarrow{M}\mathcal{D})} && {\mathcal{D}(M(1),M(x))} \\
	& {} \\
	{\mathcal{C}\downarrow \mathbf{Lex}^{\sim }_{\kappa } (\mathcal{C}\to \faktor{\mathcal{C}}{x}, \mathcal{C}\xrightarrow{N}\mathcal{D}')} && {\mathcal{D}'(N(1),N(x))}
	\arrow["{(H,\nu _H) \circ -}", from=1-1, to=3-1]
	\arrow["{\delta _{x,M}}", from=1-1, to=1-3]
	\arrow["{\nu _{H,x} \circ H(-)\circ \overline{\nu _{H,1}}}", from=1-3, to=3-3]
	\arrow["{\delta _{x,N}}", from=3-1, to=3-3]
\end{tikzcd}\]

This follows, as given an extension $F:\faktor{\mathcal{C}}{x}\to \mathcal{D}$, both routes result in
\[
N(1)\xrightarrow{\overline{\nu _{H,1}}}HM(1)\xrightarrow{\overline{H(\nu _{F,1})}}HF(id_x)\xrightarrow{HF(\Delta _x)} HF(\pi _2)\xrightarrow{H(\nu _{F,x})}HM(x)\xrightarrow{\nu _{H,x}} N(x)
\]
\end{proof}

Extending this observation from slices to their $\kappa $-filtered colimits will yield the construction we promised. 

\begin{theorem}
\label{addingconst}
    There is a (2,1)-adjunction:
\[\begin{tikzcd}
	{\mathbf{Lex}_{\kappa }(\mathcal{C},\mathbf{Set})} &&& {\mathcal{C}\downarrow \mathbf{Lex}^{\sim }_{\kappa }}
	\arrow[""{name=0, anchor=center, inner sep=0}, "{\mathcal{C}_{-}}", curve={height=-12pt}, hook, from=1-1, to=1-4]
	\arrow[""{name=1, anchor=center, inner sep=0}, "\Gamma", curve={height=-12pt}, from=1-4, to=1-1]
	\arrow["\dashv"{anchor=center, rotate=-90}, draw=none, from=0, to=1]
\end{tikzcd}\]
Being a (2,1)-adjunction means that we have an equivalence of groupoids
\[
\mathcal{C}\downarrow \mathbf{Lex}^{\sim }_{\kappa }(\mathcal{C}\xrightarrow{\varphi _K} \mathcal{C}_K, \mathcal{C}\xrightarrow{M}\mathcal{D}) \simeq Nat(K,\Gamma M)
\]
natural both in $K:\mathcal{C}\to \mathbf{Set}$ and in $M:\mathcal{C}\to \mathcal{D}$. (The 2-cells of $\mathbf{Lex}_{\kappa }(\mathcal{C},\mathbf{Set})$ are just the identities, and $Nat(K,\Gamma M)$ is a discrete groupoid.)

$\Gamma \circ \mathcal{C}_{-}$ is naturally isomorphic to the identity functor on $\mathbf{Lex}_{\kappa }(\mathcal{C},\mathbf{Set})$.
\end{theorem}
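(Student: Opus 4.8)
The plan is to exhibit $\mathcal{C}_K$ as a $\kappa$-filtered $2$-colimit of slices and then read the adjunction directly off Theorem~\ref{delta}. Let $\int K$ be the category of elements of $K$ (objects $(x,a)$ with $a\in K(x)$; a morphism $(x,a)\to(y,b)$ is an $f\colon x\to y$ with $K(f)(a)=b$). Since $K$ is $\kappa$-lex, the projection $\int K\to\mathcal{C}$ creates $\kappa$-small limits, so $\int K$ has $\kappa$-small limits and $(\int K)^{op}$ is $\kappa$-filtered. I define a diagram $(\int K)^{op}\to\mathcal{C}\downarrow\mathbf{Lex}^{\sim}_{\kappa}$ by $(x,a)\mapsto\big(\mathcal{C}\xrightarrow{!^{-1}}\faktor{\mathcal{C}}{x}\big)$, sending a morphism of $(\int K)^{op}$ — that is, a map $g\colon y\to x$ with $K(g)(b)=a$ — to the pullback functor $g^{-1}\colon\faktor{\mathcal{C}}{x}\to\faktor{\mathcal{C}}{y}$ (which satisfies $g^{-1}\circ{!}^{-1}={!}^{-1}$, so the cocone under $\mathcal{C}$ is respected), and I let $\big(\mathcal{C}\xrightarrow{\varphi_K}\mathcal{C}_K\big)$ be the $2$-colimit of this diagram in the coslice. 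It is computed by the $\kappa$-filtered colimit of the underlying categories (connected colimits in a coslice are created by the forgetful functor), which is again $\kappa$-lex with $\kappa$-lex coprojections since $\kappa$-small limits commute with $\kappa$-filtered colimits; functoriality of $\int(-)$ in $K$ turns $K\mapsto\mathcal{C}_K$ into the functor $\mathcal{C}_{-}$. On the other side I set $\Gamma M\coloneqq\mathcal{D}(M(1),M(-))$, which is $\kappa$-lex because $M$ preserves $\kappa$-small limits and $\mathcal{D}(M(1),-)$ preserves all limits.

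I would then obtain the hom-equivalence by chaining three steps. First, the universal property of the $2$-colimit (mapping out of a colimit is a limit), applied in the coslice, gives, naturally in $M$,
\[
\mathcal{C}\downarrow\mathbf{Lex}^{\sim}_{\kappa}\big(\varphi_K,\,M\big)\ \simeq\ \lim_{(x,a)\in\int K}\ \mathcal{C}\downarrow\mathbf{Lex}^{\sim}_{\kappa}\big(\mathcal{C}\xrightarrow{!^{-1}}\faktor{\mathcal{C}}{x},\,M\big),
\]
the pseudolimit of the groupoids of cones, indexed by $\int K$. Second, Theorem~\ref{delta} identifies each factor with the discrete groupoid $\mathcal{D}(M(1),M(x))=\Gamma M(x)$, and the naturality of $\delta$ in $x$ (the Proposition on naturality of $\delta$) shows the transition maps are exactly $\Gamma M(f)$, so the limit becomes $\lim_{(x,a)\in\int K}\Gamma M(x)$. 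Third, the density presentation $K\cong\operatorname*{colim}_{(x,a)\in(\int K)^{op}}\mathcal{C}(x,-)$ together with Yoneda gives the classical identification $\lim_{(x,a)\in\int K}\Gamma M(x)\cong Nat(K,\Gamma M)$: a natural transformation $K\Rightarrow\Gamma M$ is precisely a compatible family $\big(\eta_x(a)\in\Gamma M(x)\big)_{(x,a)\in\int K}$. Composing yields $\mathcal{C}\downarrow\mathbf{Lex}^{\sim}_{\kappa}(\varphi_K,M)\simeq Nat(K,\Gamma M)$.

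Naturality in $M$ is inherited from the naturality of $\delta$ in $M$ recorded in that same Proposition (and functoriality of $\lim$), while naturality in $K$ follows from functoriality of $K\mapsto(\int K)^{op}\mapsto\varphi_K$ and from the naturality in $K$ of the density isomorphism. Since every $2$-cell on the $\mathbf{Lex}_{\kappa}(\mathcal{C},\mathbf{Set})$ side and every codomain groupoid is discrete, the only coherence to track is that of the $2$-colimit, which is exactly what Theorem~\ref{delta} and its Proposition supply.

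Finally, $\Gamma\circ\mathcal{C}_{-}\cong\mathrm{id}$ (equivalently, the full faithfulness of $\mathcal{C}_{-}$, i.e.\ the unit being an isomorphism) reduces to a computation: $\varphi_K(1)$ is the image of the terminal object of each slice and $\varphi_K(x)$ is the image of $\big(x\times y\xrightarrow{\pi_2}y\big)\in\faktor{\mathcal{C}}{y}$, and because the colimit is $\kappa$-filtered the hom-sets between objects in the image of the diagram are the filtered colimits of the slice hom-sets, so
\[
\Gamma\mathcal{C}_K(x)=\mathcal{C}_K\big(\varphi_K(1),\varphi_K(x)\big)\cong\operatorname*{colim}_{(y,b)\in(\int K)^{op}}\big(\faktor{\mathcal{C}}{y}\big)\big(1,{!}^{-1}x\big)\cong\operatorname*{colim}_{(y,b)\in(\int K)^{op}}\mathcal{C}(y,x)\cong K(x),
\]
the last isomorphism being the same density formula, naturally in $x$ and in $K$. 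The step I expect to be the main obstacle is the first one: verifying that the $\kappa$-filtered $2$-colimit in the coslice exists, is $\kappa$-lex with $\kappa$-lex coprojections, and has the stated mapping-out property, so that Theorem~\ref{delta} applies factorwise; once this is in place the remaining identifications are routine naturality and coherence checks, made harmless by the discreteness of all groupoids in sight.
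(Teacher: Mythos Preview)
Your proposal is correct and follows essentially the same route as the paper. Both arguments construct $\mathcal{C}_K$ as the $\kappa$-filtered $2$-colimit of slices indexed by $(\int K)^{op}$, invoke Theorem~\ref{delta} factorwise to identify the hom-groupoids out of each slice with the sets $\Gamma M(x)$, and then recognise the resulting limit over $\int K$ as $Nat(K,\Gamma M)$ via the density formula; the computation $\Gamma\mathcal{C}_K\cong K$ is also the same. The paper merely unpacks your first step more explicitly (writing out objects and morphisms of $\mathcal{C}_K$, describing pseudococones, and chasing a $\kappa$-small diagram into a single slice to verify $\kappa$-lexness), whereas you phrase it through the universal property of the $2$-colimit; and note that your equality $g^{-1}\circ{!}^{-1}={!}^{-1}$ should be a canonical isomorphism, which is why the diagram is only a pseudofunctor and the cocones are pseudococones---this is exactly the coherence the paper tracks explicitly.
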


\begin{proof}
   We start by defining the above functors. $\Gamma $ fixes for each $\mathcal{D}\in \mathbf{Lex}^{\sim }_{\kappa }$ a terminal object $1_{\mathcal{D}}$ and maps $M:\mathcal{C}\to \mathcal{D}$ to $\mathcal{D}(1_{\mathcal{D}},-)\circ M$. A 2-cell is taken to 
\[\begin{tikzcd}
	&&& {\mathcal{D}} \\
	{\mathcal{C}} &&&&&& {\mathbf{Set}} \\
	&&& {\mathcal{D}'}
	\arrow[""{name=0, anchor=center, inner sep=0}, "M", from=2-1, to=1-4]
	\arrow[""{name=1, anchor=center, inner sep=0}, "N"', from=2-1, to=3-4]
	\arrow[""{name=2, anchor=center, inner sep=0}, "F"{description}, curve={height=12pt}, from=1-4, to=3-4]
	\arrow[""{name=3, anchor=center, inner sep=0}, "G"{description}, curve={height=-12pt}, from=1-4, to=3-4]
	\arrow[""{name=4, anchor=center, inner sep=0}, "{\mathcal{D}(1_{\mathcal{D}},-)}", from=1-4, to=2-7]
	\arrow[""{name=5, anchor=center, inner sep=0}, "{\mathcal{D}'(1_{\mathcal{D}'},-)}"', from=3-4, to=2-7]
	\arrow["\alpha"', shorten <=5pt, shorten >=5pt, Rightarrow, from=2, to=3]
	\arrow["{\nu _G}", curve={height=-6pt}, shorten <=5pt, shorten >=5pt, Rightarrow, from=0, to=1]
	\arrow["{\nu _F}"', curve={height=6pt}, shorten <=5pt, shorten >=5pt, Rightarrow, from=0, to=1]
	\arrow["{!^{\circ }F}"', curve={height=6pt}, shorten <=5pt, shorten >=5pt, Rightarrow, from=4, to=5]
	\arrow["{!^{\circ }G}", curve={height=-6pt}, shorten <=5pt, shorten >=5pt, Rightarrow, from=4, to=5]
\end{tikzcd}\]
which commutes by the commutativity of
\[\begin{tikzcd}
	& {\mathcal{D}'(F1_{\mathcal{D}},Fd)} && {\mathcal{D}'(1_{\mathcal{D}'},Fd)} \\
	{\mathcal{D}(1_{\mathcal{D}},d)} \\
	& {\mathcal{D}'(G1_{\mathcal{D}},Gd)} && {\mathcal{D}'(1_{\mathcal{D}'},Gd)}
	\arrow["F", from=2-1, to=1-2]
	\arrow["G"', from=2-1, to=3-2]
	\arrow["{!^{\circ }}", from=1-2, to=1-4]
	\arrow["{!^{\circ }}"', from=3-2, to=3-4]
	\arrow["{(\alpha _d)_{\circ }}", from=1-4, to=3-4]
\end{tikzcd}\]

        The functor $\mathcal{C}_{-}$ takes a $\kappa$-lex functor $K:\mathcal{C}\to \mathbf{Set}$ to $\mathcal{C}_K=colim _{(x,a)\in (\int K)^{op}} \faktor{\mathcal{C}}{x}$, which is a 2-colimit in the 2-category (i.e.~(2,2)-category) $\mathbf{Cat}$. Following \cite{2filtered} we can also give an explicit description. The objects of $\mathcal{C}_K$ are pairs $(u\to x,a)$ where $u\to x$ is an arrow of $\mathcal{C}$ and $a\in K(x)$. A morphism $[f]:(u\to x,a) \to (v\to y,b)$ is a map which appears between the pullback of $u\to x$ along some $h_1:(r,c)\to (x,a)$ in $\int K$ and the pullback of $v\to y$ along $h_2:(r,c)\to (y,b)$, that is, a commutative triangle

\[\begin{tikzcd}
	& {h^{-1}(x\times v)} && {x\times v} \\
	{h^{-1}(u\times y)} && {u\times y} \\
	& r && {x\times y}
	\arrow[from=1-4, to=3-4]
	\arrow[from=2-3, to=3-4]
	\arrow["{h=\langle h_1,h_2 \rangle}"',"{c\ \xmapsto{K(h)} \ (a,b)}", from=3-2, to=3-4]
	\arrow[from=2-1, to=3-2]
	\arrow[from=1-2, to=3-2]
	\arrow["f", from=2-1, to=1-2]
\end{tikzcd}\]
up to the equivalence of "being identified somewhere", i.e. $f$ (living in $\faktor{\mathcal{C}}{r}$, marked with $c\in K(r)$, the (co)domain moved here through $h:r\to x\times y$) and $g$ (living in $\faktor{\mathcal{C}}{s}$, marked with $d\in K(s)$, the (co)domain moved here through $h':s\to x\times y$) are equivalent if there is a commutative square 

\[\begin{tikzcd}
	r && {x\times y} \\
	t && s
	\arrow["{e\ \mapsto \ d}"', "k", from=2-1, to=2-3]
	\arrow["{h'}", "{d\ \mapsto \ (a,b)}"', from=2-3, to=1-3]
	\arrow["{c\ \mapsto \ (a,b)}", "h"', from=1-1, to=1-3]
	\arrow["{e\ \mapsto \ c}", "{k'}"', from=2-1, to=1-1]
\end{tikzcd}\]
such that taking pullbacks along its edges will identify $f$ and $g$. That is, the (co)domain of the pullback $(k')^{-1}(f)$ can be chosen to equal that of $k^{-1}(g)$, in which case being identified means to become equal. Otherwise there is a unique induced isomorphism between the (co)domains and being identified means that $(k')^{-1}(f)$ and $k^{-1}(g)$ make the square with these isomorphisms commutative.

We remark that (unlike in the usual description of 1-categorical filtered colimits) the object $(u\to x,a)$ is not identified with the object $(h_1^{-1}(u)\to r,c)$, but they are still isomorphic via $[id_{h_1^{-1}(u)}]$.

Now we check that if $\mathcal{C}$ is $\kappa $-lex then so is $\mathcal{C}_K$. The general idea is that given a $\kappa $-small diagram in $\mathcal{C}_K$, we can always chase it to a single slice and compute the limit there. Then, if we add something to this diagram, like a second cone, then this whole picture can be chased to a different slice, while moving between slices preserves the limit cone, so we can get an induced map in this second slice. If we find two induced maps, then again, the two cones with the two connecting maps live in a single slice, so the two maps are equal.

We write out the details in the case of $\kappa $-small products. Let $(u_i\to x_i,a_i)_{i<\gamma <\kappa }$ be a collection of objects. Choose a cone $(h_i:(y,b)\to (x_i,a_i))_i$ in $\int K$ (e.g.~$y=\prod x_i$, $b=\langle a_i\rangle _i$). Then the original collection is isomorphic to $(h_i^{-1}(u_i)\to y,b)_i$. This has a product in $\faktor{\mathcal{C}}{y}$, call it $\widetilde{u}\to y$ with projection maps $p_i:\widetilde{u}\to h_i^{-1}(u_i)$. We claim that $([p_i]:(\widetilde{u}\to y,b)\to 
(h_i^{-1}(u_i)\to y,b))$ is a product cone in $\mathcal{C}_K$. Given an object $(w\to z,c)$ and a cone $([f_i]:(w\to z,c)\to (h_i^{-1}(u_i)\to y,b))_i$ each $[f_i]$ has a representative living in some $\faktor{\mathcal{C}}{z_i}^{c_i}$, as a map between the pullback of $w\to z$ along some $k_i:(z_i, c_i)\to (z,c)$ and the pullback of $h_i^{-1}(u_i)\to y$ along some map $k_i':(z_i,c_i)\to (y,b)$. Choose a cone

\[\begin{tikzcd}
	&& {(\overline{z},\overline{c})} \\
	{(z_i,c_i)} && \dots && {(z_j,c_j)} \\
	& {(y,b)} && {(z,c)}
	\arrow["{l_i}"{description}, dashed, from=1-3, to=2-1]
	\arrow["{l_j}"{description}, dashed, from=1-3, to=2-5]
	\arrow["{k_i'}"{description}, from=2-1, to=3-2]
	\arrow["{k_i}"{description, pos=0.4}, from=2-1, to=3-4]
	\arrow["{k_j'}"{description, pos=0.4}, from=2-5, to=3-2]
	\arrow["{k_j}"{description}, from=2-5, to=3-4]
\end{tikzcd}\]

Write $(k'l)$ instead of $k'_il_i$ as this composite does not depend on $i$, and similarly write $(kl)$ instead of $k_il_i$. As $(k'l)^{-1}$ preserves all limits, we get

\[
\adjustbox{width=\textwidth}{
\begin{tikzcd}
	&&& {l_i^{-1}(k_i^{-1}w)=(kl)^{-1}w} \\
	\\
	{l_i^{-1}((k'_i)^{-1}h_i^{-1}u)=(k'l)^{-1}(h_i^{-1}u_i)} && {(k'l)^{-1}\widetilde{u}} \\
	\\
	&& {\overline{z}}
	\arrow["{l_i^{-1}f_i}"', curve={height=30pt}, from=1-4, to=3-1]
	\arrow["g"{description}, dashed, from=1-4, to=3-3]
	\arrow[color={rgb,255:red,128;green,128;blue,128}, from=1-4, to=5-3]
	\arrow[color={rgb,255:red,128;green,128;blue,128}, from=3-1, to=5-3]
	\arrow["{(k'l)^{-1}p_i}"', from=3-3, to=3-1]
	\arrow[color={rgb,255:red,128;green,128;blue,128}, from=3-3, to=5-3]
\end{tikzcd}
}
\]
and $[g]:(w\to z,c)\to (\widetilde{u}\to y,b)$ is a map which makes the diagram commutative. Uniqueness of such an arrow, as well as the case of equalizers is left to the reader. A more abstract proof can be found in \cite[Theorem 2.9.]{2filtered}.

This argument also shows that each cocone map $\varphi _K^{(x,a)}:\faktor{\mathcal{C}}{x}^a\to \mathcal{C}_K$ is $\kappa $-lex, and given a $\kappa $-lex $\mathcal{D}$ and a functor $F:\mathcal{C}_K\to \mathcal{D}$, $F$ is $\kappa $-lex iff so is each $F\circ \varphi _K^{(x,a)}$.

Let us write out the universal property of $\mathcal{C}_K$ explicitly (cf.~\cite[Theorem 1.18]{2filtered}). Write $Cocone(\mathcal{D})$ for the following category: its objects are pseudococones on $\faktor{\mathcal{C}}{-}$ with top $\mathcal{D}$, and its morphisms are given by modifications. That is, an object consists of a collection of functors $(\psi _{(x,a)}:\faktor{\mathcal{C}}{x}\to \mathcal{D})_{(x,a)\in \int K}$ and for each $f:(x,a)\to (y,b)$ in $\int K$ a natural isomorphism $\theta _f:\psi _{(y,b)}\Rightarrow \psi _{(x,a)}\circ f^{-1}$, such that for each commutative triangle $h=(x,a)\xrightarrow{f}(y,b)\xrightarrow{g}(z,c)$ in $\int K$, the pasting of $\theta _f$, $\theta _g$ and $h^{-1}\Rightarrow f^{-1}\circ g^{-1}$ equals $\theta _h$. A modification is a collection of natural transformations $\rho _{(x,a)}:\psi _{(x,a)}\Rightarrow \psi '_{(x,a)}$ which "fit between the two pseudococones", i.e.~$\theta '_f \circ \rho _{(y,b)}=\rho _{x,a} \circ \theta _f$. 

\[\begin{tikzcd}
	&&&& {\mathcal{D}} \\
	\\
	\\
	{\faktor{\mathcal{C}}{1}*} &&&&&&& {\faktor{\mathcal{C}}{x}a} \\
	&&&& {\faktor{\mathcal{C}}{y}b}
	\arrow[color={rgb,255:red,128;green,128;blue,128}, from=4-1, to=1-5]
	\arrow[curve={height=-24pt}, from=4-1, to=1-5]
	\arrow[""{name=0, anchor=center, inner sep=0}, "{!^{-1}}"{description, pos=0.7}, from=4-1, to=4-8]
	\arrow[""{name=1, anchor=center, inner sep=0}, "{!^{-1}}"{description}, from=4-1, to=5-5]
	\arrow[""{name=2, anchor=center, inner sep=0}, color={rgb,255:red,128;green,128;blue,128}, from=4-8, to=1-5]
	\arrow[""{name=3, anchor=center, inner sep=0}, curve={height=24pt}, from=4-8, to=1-5]
	\arrow[""{name=4, anchor=center, inner sep=0}, color={rgb,255:red,128;green,128;blue,128}, from=5-5, to=1-5]
	\arrow[""{name=5, anchor=center, inner sep=0}, curve={height=-18pt}, from=5-5, to=1-5]
	\arrow["{f^{-1}}"{description}, from=5-5, to=4-8]
	\arrow["\cong"{description}, draw=none, from=0, to=1]
	\arrow["{\rho _{(x,a)}}", shift left=3, shorten <=4pt, shorten >=4pt, Rightarrow, from=3, to=2]
	\arrow["{\theta '_f }"{description}, shift right=5, color={rgb,255:red,128;green,128;blue,128}, Rightarrow, from=4, to=2]
	\arrow["{\theta _f}"{description}, shorten <=7pt, shorten >=9pt, Rightarrow, from=5, to=3]
	\arrow[shift right=2, shorten <=0pt, shorten >=4pt, Rightarrow, from=5, to=4]
\end{tikzcd}\]

Being a 2-colimit means that precomposition with the pseudococone $(\varphi _F^{(x,a)})$ yields an equivalence of categories $\mathbf{Cat}(\mathcal{C}_F,\mathcal{D})\to Cocone(\mathcal{D})$. When $\mathcal{D}$ is $\kappa $-lex, this restricts to an equivalence $\mathbf{Lex}_{\kappa }(\mathcal{C}_K,\mathcal{D})\to Cocone _{lex}(\mathcal{D})$, where $Cocone _{lex}(\mathcal{D})$ is the category of $\kappa $-lex cocones, i.e.~each $\psi _{(x,a)}$ is assumed to be $\kappa $-lex.

We complete the definition of $\mathcal{C}_{-}$ by saying that it sends $\alpha :K\Rightarrow K'$ to 
\[\begin{tikzcd}
	&& {\mathcal{C}_{K'}} \\
	{\mathcal{C}} & \cong \\
	&& {\mathcal{C}_K}
	\arrow["{\varphi _{K'} }", from=2-1, to=1-3]
	\arrow["{\varphi _K =\varphi _K^{(1,*)}}"', from=2-1, to=3-3]
	\arrow["{\mathcal{C}_{\alpha }}"', from=3-3, to=1-3]
\end{tikzcd}\]
where $\mathcal{C}_{\alpha }$ is the induced map between the pseudococones $(\varphi _K^{(x,a)})$ and $(\varphi _{K'}^{(x,\alpha _x(a))})$. In explicit terms it sends the equivalence class $[f]:(u\to x,a)\to (v\to y,b)$ to $[f]:(u\to x, \alpha _x(a))\to (v\to y,\alpha _y(b))$.

It follows that $\Gamma \circ \mathcal{C}_{-}$ is naturally isomorphic to identity, by $\mathcal{C}_K(1,\varphi _K(y))=colim _{(x,a)\in (\int K)^{op}} \faktor{\mathcal{C}}{x}(id_x,\pi _2:y\times x\to x)=colim _{(x,a)\in (\int K)^{op}}\mathcal{C}(x,y)\cong K(y)$. We call the inverse of this isomorphism $\eta _K:K\Rightarrow \Gamma \circ \varphi _K$. It can be described as $K(y)\ni b \mapsto [\Delta _y:(id_y,b)\to (\pi _2:y\times y\to y,b)]\in \mathcal{C}_K(1,\varphi _K(y))$.

Using the equivalence $\delta $ from Theorem \ref{delta}, we get that a pseudococone $(\psi _{(x,a)})$ can be identified with its $(1,*)$-component $M=\psi _{(1,*)}$ equipped with a compatible collection of global elements $(s_{(x,a)}:M(1)\to M(x))_{(x,a)}$. Compatibility follows as $\delta $ is natural in $x$. A modification can be identified with its $(1,*)$-component $\rho =\rho _{(1,*)}:M\Rightarrow N$, which then makes 

\[
\adjustbox{scale=0.9}{
\begin{tikzcd}
	&&& {M(x)^a} \\
	M & {M(1)} &&& {M(y)^b} \\
	&&& {N(x)^a} \\
	N & {N(1)} &&& {N(y)^b}
	\arrow["{M(f)}", from=1-4, to=2-5]
	\arrow["{\rho _x}"{description, pos=0.3}, from=1-4, to=3-4]
	\arrow["\rho"', from=2-1, to=4-1]
	\arrow["{s_{(x,a)}}", from=2-2, to=1-4]
	\arrow["{s_{(y,b)}}"'{pos=0.4}, from=2-2, to=2-5]
	\arrow[from=2-2, to=4-2]
	\arrow["{\rho _y}"{description, pos=0.3}, from=2-5, to=4-5]
	\arrow["{N(f)}", from=3-4, to=4-5]
	\arrow["{s'_{(x,a)}}", from=4-2, to=3-4]
	\arrow["{s'_{(y,b)}}"', from=4-2, to=4-5]
\end{tikzcd}
}
\]
commutative, and conversely, every $M\Rightarrow N$ natural transformation satisfying this property gives rise to a unique modification. 

The category of $\mathcal{C}\to \mathcal{D}$ $\kappa $-lex functors ($M$) equipped with a compatible family of global elements $(s_{x,a}:M(1)\to M(x))_{(x,a)\in \int K}$ is equivalent to the category of $\mathcal{C}\to \mathcal{D}$ $\kappa $-lex functors ($M$) equipped with a natural transformation $colim _{(x,a)\in (\int K)^{op}}\mathcal{C}(x,-) =K\Rightarrow \Gamma M$. Let us write $K\downarrow \Gamma (\mathbf{Lex}_{\kappa }(\mathcal{C},\mathcal{D}))$ for this category.

We have obtained an equivalence (of 1-categories)
\[
\mathbf{Lex}_{\kappa }(\mathcal{C}_K,\mathcal{D})\to K\downarrow \Gamma (\mathbf{Lex}_{\kappa }(\mathcal{C},\mathcal{D}))
\tag{$**$}
\]
which admits the following description: it sends $\widetilde{M}:\mathcal{C}_K\to \mathcal{D}$ to $\widetilde{M}\varphi _K$, equipped with the natural transformation 
\[K(x)\ni a \mapsto \widetilde{M}\left( [\Delta _x]:(id_x,a)\to (\pi _2:x\times x\to x, a) \right): 1\to \widetilde{M}\varphi _K(x) \in \Gamma \widetilde{M}\varphi _K
\]
In other terms this is
\[\begin{tikzcd}
	{\mathcal{C}} && {\mathcal{C}_K} & {\mathcal{D}} & {\mathbf{Set}}
	\arrow["{\varphi _K}"', from=1-1, to=1-3]
	\arrow[""{name=0, anchor=center, inner sep=0}, "K"{description}, curve={height=-70pt}, from=1-1, to=1-5]
	\arrow["{\widetilde{M}}"', from=1-3, to=1-4]
	\arrow[""{name=1, anchor=center, inner sep=0}, "\Gamma"{description}, curve={height=-35pt}, from=1-3, to=1-5]
	\arrow["\Gamma"', from=1-4, to=1-5]
	\arrow["\cong", "{\eta _K}"', shorten <=6pt, shorten >=1pt, Rightarrow, from=0, to=1-3]
	\arrow["{\widetilde{M}}"', shorten <=3pt, Rightarrow, from=1, to=1-4]
\end{tikzcd}\]

For a fixed $\kappa $-lex $M:\mathcal{C}\to \mathcal{D}$, we get an equivalence between the category whose objects are extensions of $M$ and whose morphisms are natural transformations between the extensions
\[\begin{tikzcd}
	&&& {\mathcal{C}_K} \\
	{\mathcal{C}} \\
	&&& {\mathcal{D}}
	\arrow[""{name=0, anchor=center, inner sep=0}, "{\widetilde{M}_1}"{description}, curve={height=18pt}, from=1-4, to=3-4]
	\arrow[""{name=1, anchor=center, inner sep=0}, "{\widetilde{M}_2}"{description}, curve={height=-18pt}, from=1-4, to=3-4]
	\arrow[""{name=2, anchor=center, inner sep=0}, "{\varphi _K}", from=2-1, to=1-4]
	\arrow[""{name=3, anchor=center, inner sep=0}, "M"', from=2-1, to=3-4]
	\arrow["{\alpha }", shorten <=7pt, shorten >=7pt, Rightarrow, from=0, to=1]
	\arrow["{\nu _1}"', curve={height=12pt}, shorten <=5pt, shorten >=5pt, Rightarrow, from=2, to=3]
	\arrow["{\nu _2}", curve={height=-12pt}, shorten <=5pt, shorten >=5pt, Rightarrow, from=2, to=3]
\end{tikzcd}\]
and the category whose objects are given by triples $(M_1:\mathcal{C}\to \mathbf{Set},\  \sigma  :K\Rightarrow \Gamma M_1,\  \nu _1:M_1\xRightarrow{\cong }M)$ and whose morphisms are maps $\alpha ':M_1\Rightarrow M_2$ making 

\[\begin{tikzcd}
	&& {\Gamma M_1} & {M_1} \\
	K & {=} &&& {=} & M \\
	&& {\Gamma M_2} & {M_2}
	\arrow["{\Gamma \alpha '}", from=1-3, to=3-3]
	\arrow["{\nu _1}", from=1-4, to=2-6]
	\arrow["{\alpha '}", from=1-4, to=3-4]
	\arrow["{\sigma _1}", from=2-1, to=1-3]
	\arrow["{\sigma _2}"', from=2-1, to=3-3]
	\arrow["{\nu _2}"', from=3-4, to=2-6]
\end{tikzcd}\]
commutative.

We can restrict this equivalence to get an equivalence between the maximal subgroupoids (i.e.~$\alpha $ and $\alpha '$ are isos). In this case $\alpha '$ is uniquely determined, hence we get the equivalence 
\[
\mathcal{C}\downarrow \mathbf{Lex}^{\sim }_{\kappa }(\mathcal{C}\xrightarrow{\varphi _K} \mathcal{C}_K, \mathcal{C}\xrightarrow{M}\mathcal{D}) \simeq Nat(K,\Gamma M)
\tag{$***$}
\]

Naturality in $K$ and in $M$ is left to the reader.
\end{proof}

\begin{corollary}
\label{Dlocsmall}
    The equivalence $(**)$ (and therefore $(***)$) remains true when $\mathcal{D}$ is only locally small. By taking $\mathcal{D}=\mathbf{Set}$ we obtain an equivalence $\mathbf{Lex}_{\kappa }(\mathcal{C}_K,\mathbf{Set})\to K\downarrow \mathbf{Lex}_{\kappa }(\mathcal{C},\mathbf{Set})$.
\end{corollary}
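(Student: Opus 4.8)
The plan is to observe that the proof of Theorem~\ref{addingconst} never used the global smallness of $\mathcal{D}$ in an essential way: it was assembled from the universal property of the 2-colimit $\mathcal{C}_K$ in $\mathbf{Cat}$ and from the pointwise equivalence $\delta$ of Theorem~\ref{delta}. The explicit description of $\mathcal{C}_K$ (following \cite{2filtered}), together with the resulting identification of functors out of $\mathcal{C}_K$ with pseudococones, does not reference the size of the target; its restriction to $\kappa$-lex functors only asks that $\mathcal{D}$ be $\kappa$-lex. Hence this first ingredient survives verbatim for locally small $\mathcal{D}$.

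For the second ingredient I would reread the proof of Theorem~\ref{delta}. The quasi-inverse $F_a$ is built by choosing pullbacks in $\mathcal{D}$, and the verifications that $F_a$ is $\kappa$-lex, that $F_a\circ{!}^{-1}\cong M$, and that $F_{-}$ is inverse to $\delta$ invoke only the $\kappa$-small limits of $\mathcal{D}$. The sole role of smallness there is to ensure that the target $\mathcal{D}(M(1),M(x))$ of $\delta$ is a set; local smallness supplies exactly this, so $\delta$ remains an equivalence of set-sized groupoids. Feeding this into the chain of identifications of Theorem~\ref{addingconst} --- pseudococones, then pairs consisting of $M$ together with a compatible family of global elements, and finally natural transformations $K\Rightarrow\Gamma M$ --- yields $(**)$, and therefore $(***)$, for every locally small $\kappa$-lex $\mathcal{D}$; here $\Gamma M=\mathcal{D}(1_{\mathcal{D}},M-)$ is genuinely $\mathbf{Set}$-valued precisely because $\mathcal{D}$ is locally small.

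The point that deserves care, and which I expect to be the main obstacle, is checking that the two categories in $(**)$ are legitimate (locally small) rather than illegitimately large. This is where I would exploit that $\mathcal{C}_K$ is itself small: since $\mathcal{C}$ is small and each $K(x)$ is a set, $\int K$ is small, so $\mathcal{C}_K=\mathrm{colim}_{(x,a)}\mathcal{C}/x$ is a small category. To reduce rigorously to the already-proved case, I would note that every $\kappa$-lex functor out of the small category $\mathcal{C}_K$ (and likewise out of $\mathcal{C}$) factors through the $\kappa$-lex closure of its image, a small full $\kappa$-lex subcategory $\iota:\mathcal{D}_0\hookrightarrow\mathcal{D}$; this closure is small by a standard transfinite argument using the regularity of $\kappa$ and the local smallness of $\mathcal{D}$. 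Since $\iota$ is fully faithful and preserves the terminal object, it induces $\Gamma_{\mathcal{D}}(\iota-)\cong\Gamma_{\mathcal{D}_0}(-)$, so extensions and natural transformations computed in $\mathcal{D}_0$ agree with those computed in $\mathcal{D}$. Essential surjectivity and full faithfulness of the comparison functor then follow by applying Theorem~\ref{addingconst} to a common such $\mathcal{D}_0$ containing the images of the (finitely many) functors at hand.

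Finally I would specialize to $\mathcal{D}=\mathbf{Set}$, which is locally small and $\kappa$-lex. There $\Gamma M=\mathbf{Set}(1,M-)$ is naturally isomorphic to $M$, so $K\downarrow\Gamma(\mathbf{Lex}_{\kappa}(\mathcal{C},\mathbf{Set}))$ is just $K\downarrow\mathbf{Lex}_{\kappa}(\mathcal{C},\mathbf{Set})$, which gives the asserted equivalence $\mathbf{Lex}_{\kappa}(\mathcal{C}_K,\mathbf{Set})\to K\downarrow\mathbf{Lex}_{\kappa}(\mathcal{C},\mathbf{Set})$.
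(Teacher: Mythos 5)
Your proposal is correct and matches the paper's reasoning: the corollary is stated there without a separate proof precisely because the argument for Theorem \ref{addingconst} --- the explicit universal property of the 2-colimit $\mathcal{C}_K$ (which is small, being a small colimit of slices of the small $\mathcal{C}$) together with the pointwise equivalence $\delta$ of Theorem \ref{delta} --- uses the size of $\mathcal{D}$ only through the requirement that $\mathcal{D}(M(1),M(x))$, hence $\Gamma M$ and $Nat(K,\Gamma M)$, be sets, which local smallness supplies; your first two paragraphs and the specialization to $\mathbf{Set}$ via $\Gamma M\cong M$ are exactly this. Your third-paragraph reduction to a small $\kappa$-lex full subcategory $\mathcal{D}_0$ is sound but superfluous: both sides of $(**)$ are permitted to be large categories with small Hom-sets (smallness of $\mathcal{C}_K$ and local smallness of $\mathcal{D}$ guarantee the latter), so the proof of the theorem applies verbatim without any factorization through small subcategories.
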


\begin{definition}
\label{ctosite}
    $\kappa $ is an infinite regular cardinal, $(\mathcal{C},E)$ is a $\kappa $-site. Write $(\mathcal{C},E)\downarrow \mathbf{Site}_{\kappa }^{\sim }$ for the (2,1)-category whose objects are morphisms of $\kappa $-sites $M:(\mathcal{C},E)\to (\mathcal{D},E')$, arrows are triangles
\[\begin{tikzcd}
	&& {(\mathcal{D}_1,E'_1)} \\
	{(\mathcal{C},E)} \\
	&& {(\mathcal{D}_2,E_2')}
	\arrow[""{name=0, anchor=center, inner sep=0}, "{M_1}", from=2-1, to=1-3]
	\arrow["F", from=1-3, to=3-3]
	\arrow[""{name=1, anchor=center, inner sep=0}, "{M_2}"', from=2-1, to=3-3]
	\arrow["{\nu _F}", shorten <=4pt, shorten >=4pt, Rightarrow, from=0, to=1]
\end{tikzcd}\]
where $\nu _F$ is iso, 2-cells are isos fitting between the two triangles.

$\Gamma :(\mathcal{C},E)\downarrow \mathbf{Site}_{\kappa }^{\sim }\to \mathbf{Lex}_{\kappa }(\mathcal{C},\mathbf{Set})$ is defined as in Theorem \ref{addingconst}.
\end{definition}

\begin{corollary}
\label{adjwithsites}
    Let $(\mathcal{C},E)$ and $(\mathcal{D},E')$ be $\kappa $-sites and $M:(\mathcal{C},E)\to (\mathcal{D},E')$ be a morphism of $\kappa $-sites. Then the Hom-groupoids 
    \[
    \adjustbox{width=\textwidth}{
    $
    \mathcal{C}\downarrow \mathbf{Lex}^{\sim }_{\kappa }\left(\mathcal{C}\xrightarrow{\varphi _K} {\mathcal{C}}_{K},\mathcal{C}\xrightarrow{M}\mathcal{D}\right) \text{ and } \mathcal{C}\downarrow \mathbf{Site}^{\sim }_{\kappa }\left((\mathcal{C},E)\xrightarrow{\varphi _K} ({\mathcal{C}}_{K},\varphi _K[E]),(\mathcal{C},E)\xrightarrow{M}(\mathcal{D},E')\right)
    $
    }
    \]
    are equal ($\mathcal{C}\downarrow \mathbf{Lex}_{\kappa }^{\sim }$ is a full subcategory of $(\mathcal{C},E)\downarrow \mathbf{Site}_{\kappa }^{\sim }$), hence the adjunction of Theorem \ref{addingconst} can be seen as
\[\begin{tikzcd}
	{\mathbf{Lex}_{\kappa }(\mathcal{C},\mathbf{Set})} &&& {(\mathcal{C},E)\downarrow \mathbf{Site}^{\sim }_{\kappa }}
	\arrow[""{name=0, anchor=center, inner sep=0}, "{\mathcal{C}_{-}}", curve={height=-12pt}, hook, from=1-1, to=1-4]
	\arrow[""{name=1, anchor=center, inner sep=0}, "\Gamma", curve={height=-12pt}, from=1-4, to=1-1]
	\arrow["\dashv"{anchor=center, rotate=-90}, draw=none, from=0, to=1]
\end{tikzcd}\]
\end{corollary}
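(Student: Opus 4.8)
The plan is to show that the two displayed Hom-groupoids coincide on the nose; the ``$\mathcal{C}\downarrow \mathbf{Lex}_{\kappa }^{\sim }$ is a full subcategory'' phrasing is then just the observation that forgetting the covering data identifies the site-coslice Hom-groupoid with the corresponding lex-coslice one. Concretely, a $1$-cell of the site Hom-groupoid is a pair $(F,\nu_F)$ with $F\colon(\mathcal{C}_K,\varphi_K[E])\to(\mathcal{D},E')$ a morphism of $\kappa$-sites and $\nu_F\colon F\varphi_K\cong M$, while a $1$-cell of the lex Hom-groupoid is the same data with $F$ merely $\kappa$-lex; in both cases the $2$-cells are the isomorphisms compatible with the $\nu$'s, so they agree verbatim. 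Since a morphism of $\kappa$-sites is a $\kappa$-lex functor carrying one extra covering-preservation property, the site Hom-groupoid is a priori a full subgroupoid of the lex one. Hence everything reduces to surjectivity on objects: I must show that every $\kappa$-lex $F\colon\mathcal{C}_K\to\mathcal{D}$ equipped with an isomorphism $\nu_F\colon F\varphi_K\cong M$ is automatically a morphism of $\kappa$-sites $(\mathcal{C}_K,\varphi_K[E])\to(\mathcal{D},E')$.

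To prove this automatic-preservation claim I would first recall that, by the definition of a morphism of $\kappa$-sites, it suffices to check $F$ on the generating families: $F$ is a morphism of $\kappa$-sites as soon as it sends every $\varphi_K[E]$-family into $\langle E'\rangle_{\kappa}$. By construction of $\varphi_K[E]$, such a generating family on $\mathcal{C}_K$ is of the form $\varphi_K(S)=(\varphi_K(u_i)\to\varphi_K(x))_i$ for some $E$-family $S=(u_i\to x)_i$ in $\mathcal{C}$. Applying $F$ and transporting along $\nu_F$ identifies $F(\varphi_K(S))=(F\varphi_K(u_i)\to F\varphi_K(x))_i$ with the $M$-image $M(S)=(M(u_i)\to M(x))_i$. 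Since $M$ is itself a morphism of $\kappa$-sites, $M(S)$ lies in $\langle E'\rangle_{\kappa}$; and since $\langle E'\rangle_{\kappa}$ is a Grothendieck ($\kappa$-)topology it is stable under isomorphism of families, whence $F(\varphi_K(S))\in\langle E'\rangle_{\kappa}$ as well. This verifies the covering condition on generators and so shows $F$ is a morphism of $\kappa$-sites, giving the desired equality of Hom-groupoids.

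It then remains to read off the adjunction. First I would note that $\mathcal{C}_{-}$ genuinely lands in $(\mathcal{C},E)\downarrow\mathbf{Site}^{\sim}_{\kappa}$: the cocone map $\varphi_K$ is $\kappa$-lex (as established in the proof of Theorem~\ref{addingconst}) and sends $E$ into $\varphi_K[E]\subseteq\langle\varphi_K[E]\rangle_{\kappa}$ by definition, hence is a morphism of $\kappa$-sites $(\mathcal{C},E)\to(\mathcal{C}_K,\varphi_K[E])$; and $\Gamma$ on the site-coslice is the functor of Definition~\ref{ctosite}, which merely forgets the covering data. Given the equality of Hom-groupoids just established, the natural equivalence $(***)$ of Theorem~\ref{addingconst} transports word for word, yielding $(\mathcal{C},E)\downarrow\mathbf{Site}^{\sim}_{\kappa}(\varphi_K,M)\simeq Nat(K,\Gamma M)$ naturally in $K$ and in $M$, which is exactly the $(2,1)$-adjunction $\mathcal{C}_{-}\dashv\Gamma$ in the site setting. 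Because the two groupoids agree on the nose, no naturality square needs to be rechecked.

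I expect the only genuinely delicate point to be the covering verification in the second paragraph: one must use that the morphism-of-sites condition may be tested on the generators $\varphi_K[E]$ alone (rather than on all of $\langle\varphi_K[E]\rangle_{\kappa}$) and that $\langle E'\rangle_{\kappa}$ is closed under isomorphism of families, so that the isomorphism $\nu_F$ can legitimately be used to replace $F\varphi_K$ by $M$. Everything else is bookkeeping inherited from Theorem~\ref{addingconst}.
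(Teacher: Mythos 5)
Your proof is correct and follows exactly the argument the paper intends (the corollary is stated without proof precisely because it reduces to this verification): since $\nu_F\colon F\varphi_K\cong M$ identifies $F$-images of the generating $\varphi_K[E]$-families with $M$-images of $E$-families, and $\langle E'\rangle_{\kappa}$ is closed under pullback (in particular under isomorphism of families), covering-preservation is automatic, so the two Hom-groupoids coincide and the adjunction of Theorem~\ref{addingconst} transports verbatim. Your identification of the iso-stability of $\langle E'\rangle_{\kappa}$ (via pullback closure) as the only delicate point is exactly right.
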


\begin{remark}
    This adjunction has a logical meaning: think of $\mathcal{C}$ as a syntactic category. Then $\mathcal{C}_F$ is the syntactic category of an extended theory over an extended signature where we introduced constant symbols along the $\kappa$-lex functor $F:\mathcal{C}\to \mathbf{Set}$. The extended theory consists of the original axioms ($T$) plus formulas like $\varphi (a,y) \Leftrightarrow \exists x(\mu (c,x) \wedge \varphi (x,y))$ whenever $a\in F(x), c\in F(z)$, $\mu (z,x)$ is a $T$-provably functional formula and $F(\mu )$ maps $c$ to $a$.

    By Theorem \ref{addingconst} we have that $\mathcal{C}\to \mathcal{C}_F\xrightarrow{\mathcal{C}_F(1,-)} \mathbf{Set}$ is isomorphic to $F$, so indeed whenever an object $x$ is coming from $\mathcal{C}$, its global elements (constants of sort $x$) are in bijective correspondence with the elements of $F(x)$.

    Note also, that the adjunction says that specifying an extension of $M:\mathcal{C}\to \mathbf{Set}$ to $\mathcal{C}_F$ is the same as finding values for the new constants in a compatible way, i.e.~the same as giving a natural transformation $F\Rightarrow \Gamma M$.
\end{remark}

Now we formulate this adjunction with $\kappa $-toposes in place of $\kappa $-sites.

\begin{definition}
    $\kappa $ is an infinite regular cardinal, $(\mathcal{C},E)$ is a $\kappa $-site. Write $(\mathcal{C},E)\downarrow \mathbf{Topos }_{\kappa }$ for the $(2,1)$-category whose objects are $\kappa $-lex maps $\mathcal{C}\to \mathcal{E}$ which are "$E$-preserving", i.e.~which turn the $E$-families into epimorphic ones, and where $\mathcal{E}$ is a $\kappa $-topos. Arrows (from $M$ to $M'$) are triangles
\[\begin{tikzcd}
	&& {\mathcal{E}} \\
	{\mathcal{C}} \\
	&& {\mathcal{E}'}
	\arrow[""{name=0, anchor=center, inner sep=0}, "M", from=2-1, to=1-3]
	\arrow["{F^*}", from=1-3, to=3-3]
	\arrow[""{name=1, anchor=center, inner sep=0}, "{M'}"', from=2-1, to=3-3]
	\arrow["{\nu _F}", shorten <=4pt, shorten >=4pt, Rightarrow, from=0, to=1]
\end{tikzcd}\]
where $\nu _F$ is any natural transformation, and 2-cells are natural isomorphisms $F^*\Rightarrow G^*$ making the resulting 3-cell commute. 

Let $(\mathcal{C},E)\downarrow \mathbf{Topos}_{\kappa }^{\sim }$ be the subcategory (same objects, less 1-cells, same 2-cells) where $\nu _F$ is assumed to be an isomorphism.
\end{definition}

\begin{remark}
\label{Cdowntopos}
    Write $(\mathcal{C},E)\downarrow \mathbf{Topos }_{\kappa }'$ for the (2,1)-category whose objects are the same, but whose arrows (from $M$ to $M'$) are triangles
\[\begin{tikzcd}
	&& {\mathcal{E}} \\
	{\mathcal{C}} \\
	&& {\mathcal{E}'}
	\arrow[""{name=0, anchor=center, inner sep=0}, "M", from=2-1, to=1-3]
	\arrow["{F_*}"', from=3-3, to=1-3]
	\arrow[""{name=1, anchor=center, inner sep=0}, "{M'}"', from=2-1, to=3-3]
	\arrow["{\nu _F}", shorten <=4pt, shorten >=4pt, Rightarrow, from=0, to=1]
\end{tikzcd}\]
(with $\nu _F$ arbitrary, 2-cells as before).
    
    There is an isomorphism of (2,1)-categories:
\[\begin{tikzcd}
	{\mathcal{C}\downarrow \mathbf{Topos}_{\kappa }} &&& {\mathcal{C}\downarrow \mathbf{Topos}_{\kappa }'}
	\arrow[""{name=0, anchor=center, inner sep=0}, "{\eta \circ -}", curve={height=-12pt}, from=1-1, to=1-4]
	\arrow[""{name=1, anchor=center, inner sep=0}, "{\varepsilon \circ -}", curve={height=-12pt}, from=1-4, to=1-1]
	\arrow["\simeq"{description}, draw=none, from=0, to=1]
\end{tikzcd}\]
    where $\eta \circ -$ takes a 1-cell to its pasting with the unit $\eta $, and a $2$-cell $\theta $ to its mate
\[\begin{tikzcd}
	&&& {\mathcal{E}} && {\mathcal{E}} && {\mathcal{E}} \\
	{\theta '=} \\
	& {\mathcal{E}'} && {\mathcal{E}'} && {\mathcal{E}'}
	\arrow["{G^*}", from=1-4, to=3-4]
	\arrow["{F^*}", from=1-6, to=3-6]
	\arrow[""{name=0, anchor=center, inner sep=0}, Rightarrow, no head, from=1-4, to=1-6]
	\arrow[""{name=1, anchor=center, inner sep=0}, Rightarrow, no head, from=3-4, to=3-6]
	\arrow[""{name=2, anchor=center, inner sep=0}, Rightarrow, no head, from=3-2, to=3-4]
	\arrow[""{name=3, anchor=center, inner sep=0}, "{G_*}", from=3-2, to=1-4]
	\arrow[""{name=4, anchor=center, inner sep=0}, "{F_*}"', from=3-6, to=1-8]
	\arrow[""{name=5, anchor=center, inner sep=0}, Rightarrow, no head, from=1-6, to=1-8]
	\arrow["\theta"{description}, shorten <=9pt, shorten >=9pt, Rightarrow, from=0, to=1]
	\arrow["{\varepsilon _G}", shorten <=4pt, shorten >=4pt, Rightarrow, from=3, to=2]
	\arrow["{\eta _F}", shorten <=4pt, shorten >=4pt, Rightarrow, from=5, to=4]
\end{tikzcd}\]

Similarly $\varepsilon \circ - $ takes 1-cells to their pasting with the counit and 2-cells to their mates.
\end{remark}

\begin{definition}
    Write $\Gamma : (\mathcal{C},E) \downarrow \mathbf{Topos }_{\kappa }'\to \mathbf{Lex}_{\kappa }(\mathcal{C},\mathbf{Set})$ for the $(2,1)$-functor which takes a 1-cell to its post-composition with global sections:
\[\begin{tikzcd}
	&& {\mathcal{E}} \\
	{\mathcal{C}} &&& \cong & {\mathbf{Set}} \\
	&& {\mathcal{E}'}
	\arrow[""{name=0, anchor=center, inner sep=0}, "M", from=2-1, to=1-3]
	\arrow["{F_*}"', from=3-3, to=1-3]
	\arrow[""{name=1, anchor=center, inner sep=0}, "{M'}"', from=2-1, to=3-3]
	\arrow["\Gamma", from=1-3, to=2-5]
	\arrow["\Gamma"', from=3-3, to=2-5]
	\arrow["{\nu _F}", shorten <=4pt, shorten >=4pt, Rightarrow, from=0, to=1]
\end{tikzcd}\]
Here $\cong $ is the unique isomorphism. Since it is unique, $\Gamma $ is a (2,1)-functor: if an isomorphism fits between two triangles then it also makes the other half of the diagram (with the $\Gamma $'s) commute and therefore the two 1-cells are mapped to equal natural transformations.    
\end{definition}

\begin{remark}
We may consider $\mathcal{E}$ as a large $\kappa $-site (with effective epimorphic families as covers), then $\mathcal{E}\simeq Sh(\mathcal{E})$. This way $\eta: 1\Rightarrow F_*F^*$ admits the simple description: if $x\in \mathcal{E}$ then $\eta _x:\mathcal{E}(-,x)\to \mathcal{E'}(F^*(-),F^*(x))$ (in each component) takes an arrow to its $F^*$-image. Therefore the pasting of $\eta $ with the unique isomorphism $\Gamma \circ F_*\Rightarrow \Gamma $ is simply: $F^*: \mathcal{E}(1,x)\to \mathcal{E'}(1,F^*x)$. We will also write $\Gamma $ for $(\mathcal{C},E)\downarrow \mathbf{Topos}_{\kappa }\xrightarrow{\eta \circ -} (\mathcal{C},E)\downarrow \mathbf{Topos}'_{\kappa }\xrightarrow{\Gamma } \mathbf{Lex}_{\kappa }(\mathcal{C},\mathbf{Set})$, as well as for its restriction to $(\mathcal{C},E)\downarrow \mathbf{Topos}^{\sim}_{\kappa }$. It follows that $\Gamma :(\mathcal{C},E)\downarrow \mathbf{Topos}^{\sim}_{\kappa }\to \mathbf{Lex}_{\kappa }(\mathcal{C},\mathbf{Set}) $ coincides with $\Gamma :(\mathcal{C},E)\downarrow \mathbf{Site}^{\sim}_{\kappa }\to \mathbf{Lex}_{\kappa }(\mathcal{C},\mathbf{Set})$ by seeing a topos as a large site (with the canonical topology).
\end{remark}

The following is just the combination of \cite[Theorem 3.9]{presheaftype} and Corollary \ref{adjwithsites}:

\begin{theorem}
    $\kappa $ is an infinite regular cardinal, $(\mathcal{C},E)$ is a $\kappa $-site. Then the functor $\Gamma :(\mathcal{C},E)\downarrow \mathbf{Topos}^{\sim}_{\kappa }\to \mathbf{Lex}_{\kappa }(\mathcal{C},\mathbf{Set})$ admits a (2,1)-categorical left-adjoint which takes $F:\mathcal{C}\to \mathbf{Set}$ to $\mathcal{C}\to \mathcal{C}_F\to Sh(\mathcal{C}_F)$ and a natural transformation $\alpha :F\Rightarrow F'$ to 
\[
\adjustbox{scale=1}{
\begin{tikzcd}
	&& {\mathcal{C}_F} && {Sh(\mathcal{C}_F,\langle \varphi _F[E]\rangle _{\kappa})} \\
	{\mathcal{C}} & \cong && \cong \\
	&& {\mathcal{C}_{F'}} && {Sh(\mathcal{C}_{F'},\langle \varphi _{F'}[E]\rangle _{\kappa})}
	\arrow["{\varphi _F}", from=2-1, to=1-3]
	\arrow["{\mathcal{C}_{\alpha }}", from=1-3, to=3-3]
	\arrow["{\varphi _{F'}}"', from=2-1, to=3-3]
	\arrow["{\mathcal{C}_{\alpha }^*}", from=1-5, to=3-5]
	\arrow["{\#Y}", from=1-3, to=1-5]
	\arrow["{\#Y}"', from=3-3, to=3-5]
\end{tikzcd}
}
\]
\end{theorem}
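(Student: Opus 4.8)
The plan is to realise the desired left adjoint as the composite $Sh(-)\circ \mathcal{C}_{-}$ and to obtain its defining hom-equivalence by stacking the sheafification universal property on top of the site-level adjunction of Corollary \ref{adjwithsites}. Fix $K:\mathcal{C}\to \mathbf{Set}$ in $\mathbf{Lex}_{\kappa}(\mathcal{C},\mathbf{Set})$ and an object $M:\mathcal{C}\to \mathcal{E}$ of $(\mathcal{C},E)\downarrow \mathbf{Topos}^{\sim}_{\kappa}$. Viewing $\mathcal{E}$ as a large $\kappa$-site under the canonical topology, so that $\mathcal{E}\simeq Sh(\mathcal{E})$ as in the remark preceding the statement, a $\kappa$-topos map out of $Sh(\mathcal{C}_K)$ is the same datum as a $\kappa$-lex cocontinuous functor $Sh(\mathcal{C}_K)\to \mathcal{E}$, which by \cite[Theorem 3.9]{presheaftype} is in turn the same as a morphism of $\kappa$-sites $\mathcal{C}_K\to \mathcal{E}$. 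First I would record this as an equivalence of hom-groupoids
\[
(\mathcal{C},E)\downarrow \mathbf{Topos}^{\sim}_{\kappa}\left(\mathcal{C}\to Sh(\mathcal{C}_K),\ \mathcal{C}\xrightarrow{M}\mathcal{E}\right)\ \simeq\ (\mathcal{C},E)\downarrow \mathbf{Site}^{\sim}_{\kappa}\left(\mathcal{C}\xrightarrow{\varphi_K}\mathcal{C}_K,\ \mathcal{C}\xrightarrow{M}\mathcal{E}\right),
\]
which respects the coslice over $M$ because the model attached to $\varphi_K$ is, up to the canonical isomorphism, exactly $\#Y\circ \varphi_K$.

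Next I would feed the right-hand side into Corollary \ref{adjwithsites}, which yields
\[
(\mathcal{C},E)\downarrow \mathbf{Site}^{\sim}_{\kappa}\left(\mathcal{C}\xrightarrow{\varphi_K}\mathcal{C}_K,\ \mathcal{C}\xrightarrow{M}\mathcal{E}\right)\ \simeq\ Nat(K,\Gamma M),
\]
naturally in $K$ and in $M$. The symbol $\Gamma M$ here is the site-level $\Gamma$, but by the remark preceding the statement the site-valued and topos-valued $\Gamma$ agree under $\mathcal{E}\simeq Sh(\mathcal{E})$, so no ambiguity remains. Composing the two displays gives the required
\[
(\mathcal{C},E)\downarrow \mathbf{Topos}^{\sim}_{\kappa}\left(\mathcal{C}\to Sh(\mathcal{C}_K),\ \mathcal{C}\xrightarrow{M}\mathcal{E}\right)\ \simeq\ Nat(K,\Gamma M),
\]
which is precisely the hom-characterisation of a $(2,1)$-categorical left adjoint to $\Gamma$; its unit at $K$ is the composite of the site-unit $\eta_K:K\Rightarrow \Gamma\varphi_K$ with the canonical isomorphism $\Gamma\varphi_K\cong \Gamma(\#Y\circ\varphi_K)$.

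It remains to identify the action on a $2$-cell $\alpha:F\Rightarrow F'$. The left adjoint is $Sh(-)\circ\mathcal{C}_{-}$, and $\mathcal{C}_{-}$ sends $\alpha$ to the induced functor $\mathcal{C}_\alpha:\mathcal{C}_F\to \mathcal{C}_{F'}$ with $\mathcal{C}_\alpha\circ\varphi_F\cong \varphi_{F'}$. Applying $Sh(-)$, the site morphism $\mathcal{C}_\alpha$ induces a geometric morphism whose inverse image $\mathcal{C}_\alpha^*:Sh(\mathcal{C}_F)\to Sh(\mathcal{C}_{F'})$ is the unique $\kappa$-lex cocontinuous extension of $\#Y\circ\mathcal{C}_\alpha$ (a $\kappa$-lex cocontinuous functor between $\kappa$-toposes being exactly an inverse image, by the recalled universal property); this is precisely the $1$-cell drawn in the statement, and it intertwines the two copies of $\#Y$ by construction. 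Naturality of the whole equivalence in $K$ then follows by functoriality of the two stages, and naturality in $M$ was already recorded above.

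The step I expect to be the real (if purely bookkeeping) obstacle is the first one: one must verify that the universal-property translation is natural not only on objects but on the full $(2,1)$-structure of the comma category, i.e.~that it carries composites and mates of geometric morphisms correctly (cf.~Remark \ref{Cdowntopos}), and that the identification of the two versions of $\Gamma$ is compatible with this naturality at the level of $2$-cells. Once the size conventions for treating a $\kappa$-topos as a large canonical site are fixed as in the preceding remark, these coherence checks are routine.
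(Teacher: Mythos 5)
Your proposal is correct and is essentially the paper's own argument: the paper proves this theorem literally as ``the combination of \cite[Theorem 3.9]{presheaftype} and Corollary \ref{adjwithsites}'', i.e.~exactly your two-stage composition of the sheafification universal property (viewing the $\kappa $-topos $\mathcal{E}$ as a large site with the canonical topology, so that $1$-cells $F^*$ out of $Sh(\mathcal{C}_K)$ correspond to site morphisms $F^*\circ \#Y$ out of $\mathcal{C}_K$) with the site-level adjunction, using Corollary \ref{Dlocsmall} to feed in the locally small codomain. One small correction: your description of the unit as ``the site-unit $\eta _K$ composed with the canonical \emph{isomorphism} $\Gamma \varphi _K\cong \Gamma (\#Y\circ \varphi _K)$'' overstates the second factor --- the comparison $\mathcal{C}_K(1,\varphi _K x)\to Sh(\mathcal{C}_K)(1,\widehat{\varphi _K x})$ induced by $\#Y$ is in general only a canonical natural transformation, not an isomorphism, since $\langle \varphi _K[E]\rangle _{\kappa }$ need not be subcanonical; this is precisely the point of the remark immediately following the theorem (which is why the adjunction may fail to exhibit $\mathbf{Lex}_{\kappa }(\mathcal{C},\mathbf{Set})$ as a full coreflective subcategory, unless e.g.~$\kappa $ is weakly compact and $E$-families have $<\kappa $ legs). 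This slip concerns only the identification of the unit, not the existence or naturality of the hom-equivalence, so the adjunction itself stands as you argue.
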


\begin{remark}
    Unlike in Corollary \ref{adjwithsites}, this adjunction may not prove $\mathbf{Lex}_{\kappa }(\mathcal{C},\mathbf{Set})$ to be a full coreflective subcategory of $(\mathcal{C},E)\downarrow \mathbf{Topos}_{\kappa }^{\sim }$, as the composite 
    \[\mathcal{C}\xrightarrow{\varphi _F} \mathcal{C}_F \xrightarrow{\# Y} Sh(\mathcal{C}_F,\langle \varphi _F[E]\rangle _{\kappa})\xrightarrow{\Gamma } \mathbf{Set}
    \]
    results $x\mapsto Sh(\mathcal{C}_F)(1,\widehat{\varphi _F x})$ instead of $x\mapsto \mathcal{C}_F(1,\varphi _F x)=Fx$. This is not an issue when $(\mathcal{C}_F,\varphi _F[E])$ is subcanonical (assuming $(\mathcal{C},E)$ is), which is the case when $\kappa $ is weakly compact and every $E$-family has $<\kappa $ legs.
\end{remark}

\section{\texorpdfstring{$\mathbf{Set}$}{Set}-valued regular functors are \texorpdfstring{$Sh(B)$}{Sh(B)}-valued models}

This section is a generalisation of Lurie's \cite[Theorem 11., Lecture 16X]{lurie}, which says that $\mathcal{C}\to \mathbf{Set}$ regular functors on a small pretopos $\mathcal{C}$ can be identified with $\mathcal{C}\to Sh(X)$ coherent functors where $X$ is a non-fixed Stone-space. We will prove this in the infinitary setup.

\begin{theorem}
    $\kappa $ is weakly compact. Let $\mathcal{C}$ be $(\kappa ,\kappa )$-coherent with $\kappa $-small disjoint coproducts.
    \begin{itemize}
        \item[i)] If $F:\mathcal{C}\to \mathbf{Set}$ is $\kappa $-lex then $\mathcal{C}_F$ and $\varphi _F:\mathcal{C}\to \mathcal{C}_F$ are $(\kappa ,\kappa )$-coherent, $\mathcal{C}_F$ has $\kappa $-small disjoint coproducts and if $E$ is the set of $\kappa $-small effective epimorphic families in $\mathcal{C}$, then in $\mathcal{C}_F$ the topology $\langle \varphi _F[E]\rangle _{\kappa }$ coincides with the set of $\kappa $-small effective epimorphic families. In particular $Sub_{\mathcal{C}_F}^{\neg }(1)$, the lattice of complemented elements below $1$, is a $(\kappa ,\kappa )$-coherent Boolean-algebra. We write $i:Sub_{\mathcal{C}_F}^{\neg }(1)\to \mathcal{C}_F$ for the $(\kappa ,\kappa )$-coherent inclusion.
        \item[ii)] If $F$ is $\kappa $-regular then $\widetilde{M_F}:\mathcal{C}_F\xrightarrow{Y} Sh(\mathcal{C}_F)\xrightarrow{i_*} Sh(Sub_{\mathcal{C}_F}^{\neg }(1))$ is $(\kappa ,\kappa )$-coherent. 
    \end{itemize}
\end{theorem}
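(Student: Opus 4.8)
The plan is to verify the three defining clauses of a $(\kappa,\kappa)$-coherent functor for $\widetilde{M_F}=i_*\circ Y$. The $\kappa$-lex clause is immediate: by i) the topology on $\mathcal{C}_F$ is subcanonical and coincides with the $\kappa$-small effective epimorphic families, so $Y:\mathcal{C}_F\to Sh(\mathcal{C}_F)$ is $(\kappa,\kappa)$-coherent and in particular $\kappa$-lex, while $i_*$ preserves all limits as the direct image of a geometric morphism. Since in the topos $Sh(B)$ (with $B=Sub^{\neg}_{\mathcal{C}_F}(1)$) every epimorphism is effective and every $\kappa$-small union is detected by the corresponding covering family, a $\kappa$-lex functor is $(\kappa,\kappa)$-coherent as soon as it carries $\kappa$-small effective epimorphic families to epimorphic families. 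Using the $\kappa$-small disjoint coproducts of $\mathcal{C}_F$ (again by i)), a family $(f_j:c_j\to d)_{j<\gamma<\kappa}$ may be replaced by the single effective epimorphism $\coprod_j c_j\twoheadrightarrow d$, so it is enough to treat one effective epimorphism $f:c\twoheadrightarrow d$.

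To see that $\widetilde{M_F}(f)$ is epic in $Sh(B)$ I would prove the stronger statement that each section of $\widetilde{M_F}(d)$ lifts globally along $f$. Concretely $\widetilde{M_F}(d)(b)=\mathcal{C}_F(b,d)$ for $b\in B$, and it suffices to lift every $g:b\to d$ through $f$ over $b$ itself, since a global lift trivially furnishes a lift over the one-element cover $\{b\}$, which is all that the epimorphism condition in $Sh(B)$ requires. Forming the pullback $b\times_d c\twoheadrightarrow b$, such a lift is precisely a section of this effective epimorphism. Hence the whole statement reduces to the following splitting fact: \emph{when $F$ is $\kappa$-regular, every effective epimorphism onto a complemented subterminal object of $\mathcal{C}_F$ admits a section.}

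I would prove this in two moves. For $b=1$, a section of $e:P\twoheadrightarrow 1$ is just a global element of $P$, so the task is to show $\mathcal{C}_F(1,P)\neq\emptyset$. Writing $\mathcal{C}_F=colim_{(x,a)\in(\int F)^{op}}\faktor{\mathcal{C}}{x}$ (Theorem \ref{addingconst}) and representing $P$ at a stage as $\varphi_F^{(x,a)}(\bar P\xrightarrow{p}x)$, the description of $\langle\varphi_F[E]\rangle_{\kappa}$ from i) lets me assume $p:\bar P\twoheadrightarrow x$ is already an effective epimorphism in $\mathcal{C}$. Regularity of $F$ then gives surjectivity of $F(p):F(\bar P)\to F(x)$, whence some $\beta\in F(\bar P)$ satisfies $F(p)(\beta)=a$; and by the fiber computation of global sections underlying Theorem \ref{delta} one has $\mathcal{C}_F(1,P)\cong\{\beta\in F(\bar P):F(p)(\beta)=a\}$, now nonempty. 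For a general complemented $b$, with complement $\neg b$ so that $1=b\sqcup\neg b$, the map $e\sqcup\mathrm{id}_{\neg b}:P\sqcup\neg b\twoheadrightarrow 1$ is an effective epimorphism and hence splits by the case just treated; pulling this section back along $b\hookrightarrow 1$ and invoking extensivity (the summand of $P\sqcup\neg b$ lying over $b$ is exactly $P$) produces the desired section of $e$.

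The main obstacle is the reflection used in the $b=1$ step, namely passing from ``$e$ is an effective epimorphism in the colimit $\mathcal{C}_F$'' to ``$p$ is already an effective epimorphism at a single slice stage''. This is where the weak compactness of $\kappa$ enters, through the identification in i) of $\langle\varphi_F[E]\rangle_{\kappa}$ with the $\kappa$-small effective epimorphic families; it is also the one point at which the hypothesis that $F$ is $\kappa$-regular (rather than merely $\kappa$-lex) is consumed, since regularity is exactly what converts the stage-wise effective epimorphism into the surjection on $F$ that yields the witnessing element $\beta$. The remaining ingredients---the coproduct assembly of families, the extensive bookkeeping with complements, and the fiber formula for $\mathcal{C}_F(1,-)$---are routine given i) and Theorem \ref{delta}.
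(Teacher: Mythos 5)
Your treatment of a single effective epimorphism is sound and is in substance the paper's own argument. The splitting fact you isolate (every effective epi onto a complemented subterminal object splits) is exactly the paper's claim in part ii): for $b=1$ the paper produces the section by using regularity of $F$ to find $b\in F(y)$ with $F(f_0)(b)=a$, then pulls $f_0$ back along the resulting arrow $(y,b)\to (x,a)$ of $\int F$ and splits via the diagonal — your fiber formula $\mathcal{C}_F(1,P)\cong \{\beta \in F(\bar P):F(p)(\beta )=a\}$ is a correct and equivalent packaging of the same computation; and your passage from $b=1$ to general complemented $b$ by splitting $e\sqcup id_{\neg b}$ and pulling back along $b\hookrightarrow 1$ matches the paper step for step.

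The genuine gap is your reduction of families to a single epimorphism. You replace an effective epimorphic family $(f_j:c_j\to d)_{j<\gamma <\kappa }$ by $\coprod _j c_j\twoheadrightarrow d$ and declare it enough to treat one effective epi. But to conclude that the maps $\widetilde{M_F}(c_j)\to \widetilde{M_F}(d)$ are jointly epic from the epimorphy of $\widetilde{M_F}(\coprod _j c_j)\to \widetilde{M_F}(d)$, you must know that the injections $\widetilde{M_F}(c_j)\to \widetilde{M_F}(\coprod _j c_j)$ are themselves jointly epic, i.e.~that $\widetilde{M_F}$ preserves $\kappa $-small disjoint coproducts. Since the coproduct injections form a $\kappa $-small effective epimorphic family, this is an instance of the very statement you are reducing, so the reduction is circular — and your ``global lift'' method cannot close the circle, because a section $g:b\to \bigsqcup _i x_i$ in general factors through no single summand globally, only locally. (This also undercuts preservation of $\kappa $-small unions, which is part of the definition of a $(\kappa ,\kappa )$-coherent functor and which you derive from the family case.) The paper supplies precisely the missing local argument in its final paragraph: pull the injections back along $g$ to obtain a disjoint partition $(b_i\hookrightarrow b)_i$ by complemented subterminal objects — hence a cover of $b$ in $Sub_{\mathcal{C}_F}^{\neg }(1)$ for the topology of $\kappa $-small unions — over each piece of which $g$ factors through the corresponding summand, giving joint local surjectivity and hence joint epimorphy in $Sh(B)$. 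Note that this is the same maneuver as your own ``extensivity'' step for complemented $b$; it just needs to be applied here as well. With that one paragraph added, your proof is complete and coincides with the paper's.
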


\begin{proof}
    $i)$ The proof that $\mathcal{C}_F$ is $(\kappa ,\kappa )$-coherent with $\kappa $-small disjoint coproducts, and that $\varphi _F$ preserves effective epis and $\kappa $-small disjoint coproducts is analogous to the proof of $\mathcal{C}_F$ being $\kappa $-lex (we use that each $\faktor{\mathcal{C}}{x}$ is $(\kappa ,\kappa )$-coherent with $\kappa $-small disjoint coproducts, and that the pullback functors are $(\kappa ,\kappa )$-coherent). It is left to the reader. The assumption that $\kappa $ is weakly compact is needed, as this way the trees in Definition \ref{lkcoherent} iv) have size $<\kappa $ and they can be chased to a single slice where the transfinite cocompositions of the branches can be proved to form an effective epimorphic family on the root.

    As $\varphi _F$ preserves $\kappa $-small effective epimorphic families, and such families form a $\kappa $-topology on $\mathcal{C}_F$ (again, we use that $\kappa $ is weakly compact, see Example \ref{ktopolsubcanon}), the inclusion $\langle \varphi _F[E]\rangle _{\kappa }\subseteq \{<\kappa \text{ eff.~epic fam.} \}$ follows. For the converse we use that every $\kappa $-small effective epimorphic family is the image of a $\kappa $-small effective epimorphic family along some cocone map $\varphi _F^{(x,a)}: \faktor{\mathcal{C}}{x}\to \mathcal{C}_F$. In $\faktor{\mathcal{C}}{x}$ every effective epic family is the pullback of one coming from $\faktor{\mathcal{C}}{1}$, as
\[\begin{tikzcd}
	{z_i} && {z_i\times x} \\
	y && {y\times x} \\
	& x
	\arrow["{\langle id_{z_i},q_i \rangle}"{description}, from=1-1, to=1-3]
	\arrow["{f_i}"', from=1-1, to=2-1]
	\arrow["{q_i}"{description, pos=0.2}, color={rgb,255:red,128;green,128;blue,128}, from=1-1, to=3-2]
	\arrow["{f_i\times id_x}", from=1-3, to=2-3]
	\arrow["{\pi _2}"{description, pos=0.2}, color={rgb,255:red,128;green,128;blue,128}, from=1-3, to=3-2]
	\arrow["{\langle id_y,p\rangle }"{description}, from=2-1, to=2-3]
	\arrow["p"{description}, color={rgb,255:red,128;green,128;blue,128}, from=2-1, to=3-2]
	\arrow["{\pi _2}"{description}, color={rgb,255:red,128;green,128;blue,128}, from=2-3, to=3-2]
\end{tikzcd}\]
    is a pullback. Since $\varphi _F^{(x,a)}$ preserves pullbacks, we get that in $\mathcal{C}_F$ every $\kappa $-small effective epic family is the pullback of a $\varphi _F[E]$-family, which proves the other inclusion. 
    
    $ii)$ We claim that $1\in \mathcal{C}_F$ is projective, i.e.~every $f:z\twoheadrightarrow 1$ effective epi splits. Such an epi is coming from a slice: it is the equivalence class of $f_0: y\twoheadrightarrow x$ living in  $\faktor{\mathcal{C}}{x}^a$ (that is: in the slice corresponding to $(x,a)\in (\int F)^{op}$). As $F$ is regular, $F(f_0)$ is effective epi, so there is $b\in Fy$ with $F(f_0)(b)=a$. It follows that $(y,b)\to (x,a)$ is an arrow in the indexing diagram and $f_0$ is equivalent to its pullback

\[\begin{tikzcd}
	y \\
	& {y\times _x y} & y \\
	& y & x
	\arrow["{f_0}", from=2-3, to=3-3]
	\arrow["{f_0'}", from=2-2, to=3-2]
	\arrow[from=2-2, to=2-3]
	\arrow[from=3-2, to=3-3]
	\arrow["{id}"{description}, curve={height=12pt}, from=1-1, to=3-2]
	\arrow["{id}"{description}, curve={height=-12pt}, from=1-1, to=2-3]
	\arrow[dashed, from=1-1, to=2-2]
\end{tikzcd}\]
which splits.

It follows, that every $b\hookrightarrow 1$ complemented subobject is projective. Indeed, take an effective epi $f:z\twoheadrightarrow b$. Then $f\sqcup id_{\neg b}$ has a splitting $h$. By forming pullbacks
\[\begin{tikzcd}
	R & {b\sqcup \neg b} \\
	x & {x \sqcup \neg b} \\
	b & {b\sqcup \neg b}
	\arrow[""{name=0, anchor=center, inner sep=0}, hook, from=3-1, to=3-2]
	\arrow["{f~ \sqcup ~id_{\neg b}}", from=2-2, to=3-2]
	\arrow[""{name=1, anchor=center, inner sep=0}, from=2-1, to=2-2]
	\arrow["f"', from=2-1, to=3-1]
	\arrow["h", from=1-2, to=2-2]
	\arrow[""{name=2, anchor=center, inner sep=0}, from=1-1, to=1-2]
	\arrow[from=1-1, to=2-1]
	\arrow["pb"{description}, draw=none, from=1, to=0]
	\arrow["pb"{description}, draw=none, from=2, to=1]
\end{tikzcd}\]
we get that $f$ splits, as the outer rectangle is a pullback along identity.

The composite $\widetilde{M_F}:\mathcal{C}_F\xrightarrow{Y} Sh(\mathcal{C}_F)\xrightarrow{i_*}Sh(Sub_{\mathcal{C}_F}^{\neg }(1))$ is the restricted representable $x\mapsto \restr{\mathcal{C}_F(-,x)}{Sub_{\mathcal{C}_F}^{\neg }(1)}$. It is clear that $\kappa $-small limits and effective epis are preserved. Also $\kappa $-small disjoint coproducts are preserved: given $b\to \bigsqcup x_i$ we can pull back the family $(x_i\to \bigsqcup x_i)_i$ to get a disjoint partition $(b_i\hookrightarrow b)_i$ (note that $b_i\hookrightarrow 1$ is complemented), such that each $b_i\to b\to \bigsqcup x_i$ factors through some $x_j\to \bigsqcup x_i $ (namely we can take $j=i$), so the $\restr{\mathcal{C}_F(-,\bullet )}{Sub_{\mathcal{C}_F}^{\neg }(1)}$-image of $(x_i\to \bigsqcup x_i)_i$ is jointly epimorphic. 
\end{proof}

\begin{proposition}
\label{complemented}
    $\kappa $ is an infinite regular cardinal, $B$ is a $\kappa $-complete Boolean-algebra. Write $\tau _{\kappa -coh}$ for the Grothendieck-topology formed by $\kappa $-small unions. Then $Y:B\hookrightarrow Sh(B,\tau _{\kappa -coh})$ is an isomorphism onto the complemented subobjects of $1\in Sh(B)$. 
\end{proposition}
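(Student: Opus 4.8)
The plan is to compute the poset of subobjects of the terminal object $1\in Sh(B,\tau_{\kappa-coh})$ explicitly as the poset of $\kappa$-ideals of $B$, and then to check that the complemented $\kappa$-ideals are exactly the principal ones, which are precisely the representables.

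First I would record two preliminaries. Since every $b\in B$ satisfies $b\le 1_B$, the representable $Y(1_B)$ is the terminal presheaf, which is a sheaf for any topology; hence $1=Y(1_B)$. I would also note that $\tau_{\kappa-coh}$ is a pretopology (pullback-stability of covers uses the $\kappa$-distributive law $d\wedge\bigvee_i c_i=\bigvee_i(d\wedge c_i)$, valid in any $\kappa$-complete Boolean-algebra) and is subcanonical: a matching family for $Y(b)$ on a $\kappa$-small cover $(c_i\le c)_i$ with $\bigvee_i c_i=c$ forces every $c_i\le b$, and then amalgamates uniquely since $c=\bigvee_i c_i\le b$. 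Thus $Y$ lands in $Sh(B)$, and being fully faithful it realizes $B$ as a full subposet; as $b\le 1_B$ is a monomorphism in the poset $B$, each $Y(b)\hookrightarrow 1$ is a genuine subobject.

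Next I would identify $Sub_{Sh(B)}(1)$. A subobject of $1$ is a subsheaf $S\subseteq 1$, and since $1$ takes the value $\{*\}$ everywhere, $S$ is determined by the downward-closed set $U_S=\{b:S(b)\neq\emptyset\}$. The sheaf condition says exactly that $U_S$ is $\tau_{\kappa-coh}$-closed: whenever $\bigvee_i c_i=c$ is a $\kappa$-small join with all $c_i\in U_S$, then $c\in U_S$; together with the empty cover of $0_B$ (whose sheaf condition forces $0_B\in U_S$ via the empty limit) this says precisely that $U_S$ is a $\kappa$-ideal (downward-closed and closed under $\kappa$-small joins). Hence $Sub_{Sh(B)}(1)$ is the frame of $\kappa$-ideals of $B$, with $\bot=(0_B]$ (the initial object) and $\top=B$, and the representable $Y(b)$ corresponds to the principal ideal $(b]$.

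Under this identification $(b]$ is complemented, with complement $(\neg b]$: indeed $(b]\cap(\neg b]=(b\wedge\neg b]=(0_B]=\bot$, while the $\kappa$-ideal generated by $(b]\cup(\neg b]$ contains $b\vee\neg b=1_B$, hence equals $\top$. So $Y$ takes values in complemented subobjects. The crux is the converse: that every complemented $\kappa$-ideal $U$ is principal. Writing $V$ for the complement, $U\vee V=B\ni 1_B$ means $1_B\le\bigvee_i w_i$ for some $\kappa$-small family with each $w_i\in U\cup V$; regrouping and using closure under $\kappa$-joins gives $1_B=u\vee v$ with $u\in U$, $v\in V$. Since $U\cap V=(0_B]$ and both ideals are downward-closed, $u\wedge v=0_B$, so $v=\neg u$. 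Then for any $w\in U$ we have $w=(w\wedge u)\vee(w\wedge v)$ with $w\wedge v\in U\cap V=(0_B]$, whence $w\le u$; thus $U=(u]=Y(u)$. This shows the image of $Y$ is exactly the complemented subobjects, so $b\mapsto(b]$ is the bijection realizing $Y$ as an order-isomorphism $B\xrightarrow{\sim}Sub^{\neg}_{Sh(B)}(1)$. I expect the "complemented implies principal" step to be the only nonroutine point, the rest being the bookkeeping of the two preliminary facts and the frame-theoretic description of $Sub(1)$.
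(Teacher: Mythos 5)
Your proof is correct and follows essentially the same route as the paper: identify $Sub_{Sh(B)}(1)$ with the poset of $\kappa$-complete ideals, then show a complemented ideal pair $(U,V)$ yields $u\in U$, $v\in V$ with $u\vee v=\top$ and $u\wedge v=\bot$ (by regrouping a $\kappa$-small cover of $\top$ and using closure under $\kappa$-small joins), forcing $U=\mathord{\downarrow}u$. The only difference is that you spell out what the paper asserts without proof --- subcanonicity of $\tau_{\kappa\text{-coh}}$ and the subobject-ideal correspondence --- which is harmless extra detail, not a different argument.
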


\begin{proof}
    A subobject is the same as a $\kappa $-complete ideal. Let $I_1$ and $I_2$ be $\kappa $-complete ideals whose intersection is $\{\bot \}$ such that the closure of their union under $ \kappa $-small joins contains $\top $. Since $I_1$ and $I_2$ are closed under $\kappa $-small joins there is $b_1\in I_1$ and $b_2\in I_2$ with $b_1\vee b_2=\top $ (and $b_1 \wedge b_2=\bot $ since it is an element of the intersection). It follows that $I_1=\downarrow b_1$ and $I_2=\downarrow b_2$.
\end{proof}

\begin{proposition}
\label{psi}
    $\kappa $ is weakly compact. Let $\mathcal{C}$ be $(\kappa ,\kappa )$-coherent with $\kappa $-small disjoint coproducts, $B$ a $(\kappa ,\kappa )$-coherent Boolean-algebra (covers are $\kappa $-small unions) and $N:\mathcal{C}\to Sh(B)$ a $(\kappa ,\kappa )$-coherent functor. 

    Then there is an isomorphism $\psi  :B\to Sub_{\mathcal{C}_{\Gamma N}}^{\neg }(1)=colim _{(x,a)\in (\int \Gamma N)^{op}}Sub_{\mathcal{C}}^{\neg }(x)$ given by $b\mapsto [1\sqcup \emptyset \hookrightarrow 1\sqcup 1^{(\restr{*}{b},\restr{*}{\neg b})}]$ which is the equivalence class of $1\sqcup \emptyset \hookrightarrow 1\sqcup 1$ living in $Sub^{\neg }_{\mathcal{C}}(1\sqcup 1)$ marked with  $(\restr{*}{b},\restr{*}{\neg b})\in N(1\sqcup 1)(\top )$. This is the section glued from $N(1)(b)=*\xrightarrow{(i_1)_b}N(1\sqcup 1)(b) $ over $b$ and from $N(1)(\neg b)=*\xrightarrow{(i_2)_{\neg b}}N(1\sqcup 1)(\neg b) $ over its complement. In other terms, it is the map $\widehat{b}\sqcup \widehat{\neg b}\xrightarrow{!\sqcup !} \widehat{\top } \sqcup \widehat{\top }$ in $Sh(B)(\widehat{\top },\widehat{\top }\sqcup \widehat{\top })=Sh(B)(1,1\sqcup 1)=\Gamma 2=\Gamma N2$.

    The inverse of $\psi $ is the following: given $[u\hookrightarrow x^s]$ where $u$ is a complemented subobject of $x$ with complement $u^c$, we map it to $b\in B$, satisfying $\restr{s}{b}\in Nu(b)$ and $\restr{s}{\neg b}\in N(u^c)(\neg b)$.
\end{proposition}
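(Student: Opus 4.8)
The plan is to obtain the inverse $\Phi $ of $\psi $ as the restriction to complemented subobjects of $1$ of a canonical extension of $N$, which makes $\Phi $ well-defined and a Boolean homomorphism for free, leaving only bijectivity to be verified on representatives. Recall first that, as $1\in Sh(B)$ is represented by $\top $, we have $\Gamma N(x)=Sh(B)(1,N(x))=N(x)(\top )$, so an object of $\int \Gamma N$ is a pair $(x,s)$ with $s\colon 1\to N(x)$ a global section, and $Sub^{\neg }_{\mathcal{C}_{\Gamma N}}(1)=colim_{(x,s)}Sub^{\neg }_{\mathcal{C}}(x)$ is the set of complemented subobjects $u\hookrightarrow x$ with such a marking, up to the colimit identifications. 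By Corollary \ref{Dlocsmall}, applied to the locally small category $\mathcal{D}=Sh(B)$ and the identity $\Gamma N\Rightarrow \Gamma N$, the functor $N$ has a canonical $\kappa $-lex extension $\widetilde{N}\colon \mathcal{C}_{\Gamma N}\to Sh(B)$ whose restriction to the slice over $(x,s)$ is, by the construction of the quasi-inverse in Theorem \ref{delta}, the fiber functor $F_s$ sending $y\to x$ to the pullback of $N(y)\to N(x)$ along $s$. In particular $\widetilde{N}$ carries $[u\hookrightarrow x^s]$ to $s^{-1}(Nu)\hookrightarrow \widetilde{N}(1)=1$.

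Next I check that $\widetilde{N}$ restricts to a Boolean homomorphism $Sub^{\neg }_{\mathcal{C}_{\Gamma N}}(1)\to Sub^{\neg }_{Sh(B)}(1)$. Since $u$ is complemented we have $x\cong u\sqcup u^c$, and $N$ being $(\kappa ,\kappa )$-coherent preserves this disjoint coproduct, so $N(x)=Nu\sqcup N(u^c)$. Pulling back along $s$ and using that coproducts in the topos $Sh(B)$ are disjoint and pullback-stable gives $1=s^{-1}(Nu)\sqcup s^{-1}(N(u^c))$, so $s^{-1}(Nu)$ is complemented with complement $s^{-1}(N(u^c))$; hence complements are preserved. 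Meets are intersections, preserved because $\widetilde{N}$ is lex, joins then follow by De Morgan, and $\top ,\bot $ go to $1,\emptyset $. Post-composing with the Boolean isomorphism $Sub^{\neg }_{Sh(B)}(1)\cong B$ of Proposition \ref{complemented} yields a Boolean homomorphism $\Phi \colon Sub^{\neg }_{\mathcal{C}_{\Gamma N}}(1)\to B$; unwinding that isomorphism shows $\Phi ([u\hookrightarrow x^s])$ is precisely the $b$ with $\restr{s}{b}\in Nu(b)$ and $\restr{s}{\neg b}\in N(u^c)(\neg b)$, i.e.~the map described in the statement.

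It remains to see that $\Phi $ is a bijection inverse to $\psi $. For $\Phi \circ \psi =\mathrm{id}_B$: here $N(1\sqcup 1)=1\sqcup 1$ and the marking $m=(\restr{*}{b},\restr{*}{\neg b})$ lies in the first summand over $b$ and in the second over $\neg b$, so $\Phi (\psi (b))=b$, because $m^{-1}(N(1\sqcup \emptyset ))=m^{-1}(\text{first summand})$ is the complemented subobject of $1$ corresponding to $b$ under Proposition \ref{complemented}; in particular $\Phi $ is surjective. Since $\Phi $ is a Boolean homomorphism, for injectivity it suffices to show that $\Phi ([u\hookrightarrow x^s])=\bot $ forces $[u\hookrightarrow x^s]=\bot $. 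If $s^{-1}(Nu)=\emptyset $ then $s$ factors through the summand $N(u^c)\hookrightarrow N(x)$, giving $s'\colon 1\to N(u^c)$ with $N(\iota )(s')=s$ for $\iota \colon u^c\hookrightarrow x$; thus $\iota \colon (u^c,s')\to (x,s)$ is a morphism of $\int \Gamma N$ which identifies $[u\hookrightarrow x^s]$ with $[\iota ^{-1}u\hookrightarrow (u^c)^{s'}]$, and $\iota ^{-1}u=u\wedge u^c=\emptyset $, so this class is $\bot $. Hence $\Phi $ is a bijective Boolean homomorphism, i.e.~an isomorphism, and $\psi =\Phi ^{-1}$ is the claimed isomorphism.

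The step I expect to be the real work is identifying $\widetilde{N}$ on subobjects with the fiber functor and verifying that it sends the complement $u^c$ to the complement of $s^{-1}(Nu)$: this is exactly where the hypotheses enter, through preservation of $\kappa $-small disjoint coproducts by the coherent functor $N$ together with their pullback-stability in $Sh(B)$, rather than through any coherence of $\widetilde{N}$, which is only known to be $\kappa $-lex.
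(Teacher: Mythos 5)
Your proof is correct, but it takes a genuinely different route from the paper's. The paper works entirely at the level of representatives: it verifies by hand that $\psi$ is a homomorphism (rewriting both classes over $1\sqcup 1\sqcup 1\sqcup 1$ with the section indexed by $b\wedge b'$, $\neg b\wedge b'$, etc.), proves the inverse well-defined in two separate steps (existence of $b$ by refining a cover of $\top$ into a disjoint one, using that $N$ preserves disjoint coproducts; independence of the representative using that $N$ preserves pullbacks and pullbacks are pointwise in $Sh(B)$), and then checks both composites are the identity, the nontrivial direction via the comparison map $x=u\sqcup u^c\xrightarrow{!\sqcup !}1\sqcup 1$. You instead build the inverse globally: Corollary \ref{Dlocsmall} applied to $(N,\mathrm{id}_{\Gamma N})$ yields the canonical extension $\widetilde{N}:\mathcal{C}_{\Gamma N}\to Sh(B)$, whose slice components are the fiber functors of Theorem \ref{delta}, so $[u\hookrightarrow x^s]\mapsto s^{-1}(Nu)$ is well-defined by functoriality, and composing with Proposition \ref{complemented} gives a Boolean homomorphism $\Phi$ essentially for free (complements from $1=s^{-1}(Nu)\sqcup s^{-1}(Nu^c)$, meets from lexness, joins by De Morgan). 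Bijectivity then reduces to the easy computation $\Phi\circ \psi =\mathrm{id}_B$ plus triviality of the kernel; your factorization of $s$ through $Nu^c$ when $s^{-1}(Nu)=\emptyset$ is exactly right, injectivity of a Boolean homomorphism with trivial kernel follows via symmetric differences, and $\psi \circ \Phi =\mathrm{id}$ then follows formally, so the argument is complete. What your route buys: well-definedness and the homomorphism property come from functoriality rather than computation, and the paper's cover-refinement argument for the existence of $b$ is replaced by the cleaner decomposition of $1$ plus Proposition \ref{complemented}. What it costs: you lean on the Section 3 machinery, in particular on identifying $\widetilde{N}$ on each slice with $F_s$ up to the canonical isomorphism $\widetilde{N}(1)\cong 1$ (a point you rightly flag, and along which subobjects of $1$ must be transported), and you obtain the homomorphism property of $\psi$ itself only indirectly, as $\psi =\Phi ^{-1}$, where the paper's four-fold coproduct computation exhibits it explicitly.
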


\begin{proof}
    $\psi $ is a homomorphism: $[1\sqcup \emptyset \hookrightarrow 1\sqcup 1 ^{(\restr{*}{\top },-)}]=[1\hookrightarrow 1 ^{*}]$ as the latter subobject is the pullback of the first one via $1\sqcup \emptyset \hookrightarrow 1\sqcup 1$ whose $\Gamma N$-image takes $*$ to $(\restr{*}{\top },-)$. Similarly, $\bot $ is preserved. Given $b,b'$ we have 
    \[
    [1\sqcup \emptyset \hookrightarrow 1\sqcup 1^{(\restr{*}{b},\restr{*}{\neg b})}]=[1\sqcup \emptyset \sqcup 1 \sqcup \emptyset \hookrightarrow 1\sqcup 1\sqcup 1\sqcup 1 ^{(\restr{*}{b\wedge b'},\restr{*}{\neg b\wedge b'},\restr{*}{b\wedge {\neg b'}},\restr{*}{\neg b\wedge {\neg b'}})}]
    \]
    and
    \[
    [1\sqcup \emptyset \hookrightarrow 1\sqcup 1^{(\restr{*}{b'},\restr{*}{{\neg b'}})}]=[1 \sqcup 1 \sqcup \emptyset \sqcup \emptyset \hookrightarrow 1\sqcup 1\sqcup 1\sqcup 1 ^{(\restr{*}{b\wedge b'},\restr{*}{\neg b\wedge b'},\restr{*}{b\wedge {\neg b'}},\restr{*}{\neg b\wedge {\neg b'}})}]
    \]
    Therefore
    \begin{multline*}
         \psi (b)\wedge \psi (b')=[1 \sqcup \emptyset  \sqcup \emptyset \sqcup \emptyset \hookrightarrow 1\sqcup 1\sqcup 1\sqcup 1 ^{(\restr{*}{b\wedge b'},\restr{*}{\neg b\wedge b'},\restr{*}{b\wedge {\neg b'}},\restr{*}{\neg b\wedge {\neg b'}})}]=\\ =[1\sqcup \emptyset \hookrightarrow 1\sqcup 1 ^{(\restr{*}{b\wedge b'},\restr{*}{\neg (b\wedge b')})}]=\psi (b\wedge b')
    \end{multline*}
    and similarly $\psi (b)\vee \psi (b')=\psi (b\vee b')$.

    The proposed inverse is well-defined: first of all, such a $b$ exists. Since $N$ preserves disjoint coproducts, there is a cover $(b_i\to \top )_{i<\gamma <\kappa }$ s.t.~for each $i$ either $\restr{s}{b_i}\in Nu(b_i)$ or $\restr{s}{b_i}\in Nu^c(b_i)$, then define $b$ as the union of those $b_i$'s which belong to the former set. Secondly, it does not depend on the representatives: given $x\xrightarrow{f}y\hookleftarrow v$ and $s\in Nx(\top)$ we get $\restr{Nf_{\top}(s)}{b}=Nf_b(\restr{s}{b})\in Nv(b)$ iff $\restr{s}{b}\in N(f^{-1}v)(b)$ as $N$ preserves pullbacks (and pullbacks are pointwise in $Sh(B)$).

    The composite $B\to Sub_{\mathcal{C}_{\Gamma N}}^{\neg }(1) \to B$ is clearly identity. To check the other composite take $[u\hookrightarrow x^s]$ where $u$ is complemented and $\restr{s}{b}\in Nu(b)$, $\restr{s}{\neg b}\in Nu^c(\neg b)$. Then $x=u\sqcup u^c\xrightarrow{!\sqcup !} 1\sqcup 1$ is such that $u\hookrightarrow x$ is the pullback of $1\sqcup \emptyset $ along it and its $\Gamma N$-image takes $s$ to $(\restr{*}{b},\restr{*}{\neg b})$. So $[u\hookrightarrow x^s]=[1\sqcup \emptyset \hookrightarrow 1\sqcup 1 ^{(\restr{*}{b},\restr{*}{\neg b})}]$.
\end{proof}

\begin{definition}
    $\kappa $ is weakly compact, $\mathcal{C}$ is $(\kappa ,\kappa )$-coherent. We write $\mathcal{C}\downarrow Sh(\mathbf{BAlg}_{\kappa ,\kappa })$ for the category whose objects are $(\kappa ,\kappa )$-coherent functors  $M:\mathcal{C}\to Sh(B)$, where $B$ is a $(\kappa ,\kappa )$-coherent Boolean-algebra (covers are $\kappa $-small unions). 1-cells (from $M$ to $M'$) are triangles 
\[
\adjustbox{scale=0.9}{
\begin{tikzcd}
	&& {Sh(B)} \\
	{\mathcal{C}} \\
	&& {Sh(B')}
	\arrow[""{name=0, anchor=center, inner sep=0}, "M", from=2-1, to=1-3]
	\arrow[""{name=1, anchor=center, inner sep=0}, "{M'}"', from=2-1, to=3-3]
	\arrow["{H_*}"', from=3-3, to=1-3]
	\arrow["{\nu }", shorten <=4pt, shorten >=4pt, Rightarrow, from=0, to=1]
\end{tikzcd}
}
\]
where $H:B\to B'$ is a homomorphism of Boolean-algebras preserving $<\kappa $ meets, $H_*$ is precomposition with $H^{op}$ and $\nu $ is an arbitrary natural transformation.
\end{definition}

\begin{remark}
    The 1-category $\mathcal{C}\downarrow Sh(\mathbf{BAlg}_{\kappa ,\kappa })$ can be seen as a full subcategory in the (2,1)-category $\mathcal{C}\downarrow \mathbf{Topos}_{\kappa }'$ (cf.~Remark \ref{Cdowntopos}). In other terms, 2-cells in this subcategory are trivial.

    Indeed, by Proposition \ref{complemented} the inclusion $Y:B\hookrightarrow Sh(B)$ is an isomorphism from $B$ onto $Sub_{Sh(B)}^{\neg }(1)$. Therefore, given a $\kappa $-lex inverse image map $H^*:Sh(B)\to Sh(B')$ it restricts to a $\kappa $-homomorphism $H_0:B\to B'$, and hence $H^*= H_0^*$ as both preserve colimits. So every $\kappa $-map $Sh(B')\to Sh(B)$ is coming from a $\kappa $-homomorphism $B\to B'$. If $H^*\cong K^*$ (equivalently $H_*\cong K_*$) then $H_0\cong K_0$, consequently $H_0=K_0$, meaning $H_*=K_*$ (precomposition with equal maps is equal).
\end{remark}

\begin{theorem}
\label{Lurie}
      $\kappa $ is weakly compact, $\mathcal{C}$ is $(\kappa ,\kappa )$-coherent with $\kappa $-small disjoint coproducts. We have an equivalence of categories:

\[
\adjustbox{scale=0.9}{
\begin{tikzcd}
	{\mathbf{Reg}_{\kappa }(\mathcal{C},\mathbf{Set})} && {\mathcal{C}\downarrow Sh(\mathbf{BAlg}_{\kappa , \kappa })}
	\arrow[""{name=0, anchor=center, inner sep=0}, "{M_{\bullet }}", curve={height=-12pt}, from=1-1, to=1-3]
	\arrow[""{name=1, anchor=center, inner sep=0}, "{\Gamma }", curve={height=-12pt}, from=1-3, to=1-1]
	\arrow["\simeq"{description}, draw=none, from=0, to=1]
\end{tikzcd}
}
\]

Here $M_{\bullet }$ takes a functor $F$ to $M_F:\mathcal{C}\xrightarrow{\varphi _F}\mathcal{C}_F\xrightarrow{\widetilde{M}_F}Sh(Sub_{\mathcal{C}_F}^{\neg}(1))$ and a natural transformation $\alpha :F\Rightarrow G$ to 

\[
\adjustbox{scale=0.9}{
\begin{tikzcd}
	&&& {Sh(Sub_{\mathcal{C}_F}^{\neg }(1))} \\
	{\mathcal{C}} \\
	&&& {Sh(Sub_{\mathcal{C}_G}^{\neg }(1))}
	\arrow[""{name=0, anchor=center, inner sep=0}, "{M_F}", from=2-1, to=1-4]
	\arrow[""{name=1, anchor=center, inner sep=0}, "{M_G}"', from=2-1, to=3-4]
	\arrow["{(\restr{\mathcal{C}_{\alpha }}{Sub_{\mathcal{C}_F}^{\neg }(1)})_*}"', from=3-4, to=1-4]
	\arrow["{\mu _{\alpha }}", shorten <=4pt, shorten >=4pt, Rightarrow, from=0, to=1]
\end{tikzcd}
}
\]
whose $x$-component 
\begin{multline*}
    \mu _{\alpha ,x}: \restr{\mathcal{C}_F(-,\varphi _F x)}{Sub_{\mathcal{C}_F}^{\neg }(1)}\to \restr{\mathcal{C}_G(\mathcal{C}_{\alpha }(-),\varphi _G x)}{Sub_{\mathcal{C}_F}^{\neg }(1)}\cong \restr{\mathcal{C}_G(\mathcal{C}_{\alpha }(-),\mathcal{C}_{\alpha } \varphi _F x)}{Sub_{\mathcal{C}_F}^{\neg }(1)}
\end{multline*} is simply applying $\mathcal{C}_{\alpha }$ to arrows. In other terms it is

\[
\adjustbox{scale=0.9}{
\begin{tikzcd}
	&& {\mathcal{C}_F} && {Sh(\mathcal{C}_F)} & {Sh(\mathcal{C}_F)} & {Sh(Sub_{\mathcal{C}_F}^{\neg }(1))} \\
	{\mathcal{C}} &&& {\cong } \\
	&& {\mathcal{C}_G} && {Sh(\mathcal{C}_G)} & {Sh(\mathcal{C}_G)} & {Sh(Sub_{\mathcal{C}_G}^{\neg }(1))}
	\arrow[""{name=0, anchor=center, inner sep=0}, "{\varphi _F}", from=2-1, to=1-3]
	\arrow[""{name=1, anchor=center, inner sep=0}, "{\varphi _G}"', from=2-1, to=3-3]
	\arrow["{\mathcal{C}_{\alpha }}", from=1-3, to=3-3]
	\arrow["{Y}", from=1-3, to=1-5]
	\arrow[""{name=2, anchor=center, inner sep=0}, Rightarrow, no head, from=1-5, to=1-6]
	\arrow["{(\mathcal{C}_{\alpha })_*}"', from=3-6, to=1-6]
	\arrow[""{name=3, anchor=center, inner sep=0}, "{(i_F)_*}", from=1-6, to=1-7]
	\arrow[""{name=4, anchor=center, inner sep=0}, "{(i_G)_*}"', from=3-6, to=3-7]
	\arrow["{(\restr{\mathcal{C}_{\alpha }}{Sub_{\mathcal{C}_F}^{\neg }(1)})_*}"', from=3-7, to=1-7]
	\arrow["{(\mathcal{C}_{\alpha })^*}"', from=1-5, to=3-5]
	\arrow[""{name=5, anchor=center, inner sep=0}, Rightarrow, no head, from=3-5, to=3-6]
	\arrow["Y"', from=3-3, to=3-5]
	\arrow["{\nu _{\alpha }}","{\cong }"', shorten <=4pt, shorten >=4pt, Rightarrow, from=0, to=1]
	\arrow["{\eta }"', shorten <=9pt, shorten >=9pt, Rightarrow, from=2, to=5]
	\arrow["{=}", draw=none, from=3, to=4]
\end{tikzcd}
}
\]

$\Gamma $ is post-composition with global sections, as before.
\end{theorem}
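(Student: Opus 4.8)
The plan is to prove that $M_{\bullet}$ and $\Gamma$ are mutually quasi-inverse, by first checking that both land in the asserted categories and then exhibiting natural isomorphisms $\Gamma \circ M_{\bullet}\cong \mathrm{id}$ and $M_{\bullet}\circ \Gamma \cong \mathrm{id}$. That $M_{\bullet}$ takes values in $\mathcal{C}\downarrow Sh(\mathbf{BAlg}_{\kappa ,\kappa })$ is essentially the content of the theorem opening this section: for $\kappa$-regular $F$, parts i) and ii) give that $M_F=\widetilde{M}_F\varphi _F$ is a $(\kappa ,\kappa )$-coherent functor into $Sh(Sub_{\mathcal{C}_F}^{\neg }(1))$ over the $(\kappa ,\kappa )$-coherent Boolean-algebra $Sub_{\mathcal{C}_F}^{\neg }(1)$. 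The one genuinely new check is that $\Gamma N=Sh(B)(1,N(-))$ is $\kappa$-regular for every object $N$. Preservation of $\kappa $-small limits is immediate, since $N$ and $Sh(B)(1,-)$ both preserve them. For effective epis, let $f$ be an effective epi in $\mathcal{C}$, so $N(f)$ is a cover in $Sh(B)$ and a global section $s$ of the codomain lifts locally along some $\kappa $-small cover $(b_i\to \top )_i$ of $\top $. Here I would use crucially that $B$ is Boolean: such a cover refines to a $\kappa $-small \emph{disjoint} partition $(d_i)_i$, with $d_i=b_i\wedge \bigwedge _{j<i}\neg b_j$ available by $\kappa $-completeness, and over a disjoint partition the local lifts are automatically compatible and glue to a global lift. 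Thus $\Gamma N$ preserves effective epis.

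For the triangle $\Gamma \circ M_{\bullet}\cong \mathrm{id}$, I would compute $\Gamma M_F(x)$ directly. Since $\kappa $ is weakly compact and every $\kappa $-small effective epimorphic family has $<\kappa $ legs, the topology on $\mathcal{C}_F$ produced by part i) is subcanonical (Example \ref{ktopolsubcanon}); hence $\widehat{\varphi _F x}$ is a sheaf, $\widetilde{M}_F(\varphi _F x)$ is its restriction to $Sub_{\mathcal{C}_F}^{\neg }(1)$, and taking global sections evaluates it at the top object $1$, giving $\mathcal{C}_F(1,\varphi _F x)$. By the isomorphism $\Gamma \circ \mathcal{C}_{-}\cong \mathrm{id}$ of Theorem \ref{addingconst} this is naturally $F(x)$, and I would read off naturality in $F$ from the fact that $\mu _{\alpha }$ acts by applying $\mathcal{C}_{\alpha }$ to arrows, which on global elements is exactly $\alpha $.

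The heart of the proof is the triangle $M_{\bullet}\circ \Gamma \cong \mathrm{id}$. Given $N:\mathcal{C}\to Sh(B)$, Proposition \ref{psi} supplies an isomorphism $\psi :B\to Sub_{\mathcal{C}_{\Gamma N}}^{\neg }(1)$ of $(\kappa ,\kappa )$-coherent Boolean-algebras, hence an induced isomorphism of toposes $Sh(\psi )$; the task is to identify $M_{\Gamma N}$ with $Sh(\psi )\circ N$. By subcanonicity, as above, this reduces to a natural isomorphism $\mathcal{C}_{\Gamma N}(\psi (b),\varphi _{\Gamma N}x)\cong N(x)(b)$. I would build it using disjoint coproducts: since $\psi (b)\sqcup \psi (\neg b)=1$ in $\mathcal{C}_{\Gamma N}$, a map $\psi (b)\to \varphi _{\Gamma N}x$ extends canonically to a global element $1=\psi (b)\sqcup \psi (\neg b)\to \varphi _{\Gamma N}x\sqcup 1=\varphi _{\Gamma N}(x\sqcup 1)$. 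Global elements of $\varphi _{\Gamma N}(x\sqcup 1)$ are $\Gamma N(x\sqcup 1)=(N(x)\sqcup 1)(\top )$, since both $\varphi _{\Gamma N}$ and $N$ preserve disjoint coproducts, and over the Boolean base a global section of $N(x)\sqcup 1$ is a complemented $c\in B$ together with a section in $N(x)(c)$ (Proposition \ref{complemented}); the elements whose support is exactly $b$ form precisely $N(x)(b)$. Disjointness of coproducts in $\mathcal{C}_{\Gamma N}$ guarantees that this extension-by-$b$ correspondence is a bijection, natural in $x$ and in $b$.

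The step I expect to be the main obstacle is promoting these object-level isomorphisms to an honest equivalence of categories, i.e.~checking naturality at the level of 1-cells. On the left this means verifying that $\Gamma $ sends a triangle $(H_*,\nu )$ to the natural transformation induced by the underlying $\kappa $-homomorphism $H:B\to B'$, and that this is compatible with the identification of Proposition \ref{psi}; on the right it means checking that $\psi $ is natural in $N$, so that the isomorphisms $M_{\Gamma N}\cong Sh(\psi )\circ N$ assemble into a natural transformation respecting the maps $H_*$. Once the gluing argument of the first paragraph and the coproduct identification of the third are in place, this final compatibility is bookkeeping, but it is where the explicit descriptions of $\psi ^{-1}$ and of $\mu _{\alpha }$ must be matched carefully.
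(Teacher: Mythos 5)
Your proposal is correct in outline, and on two of its three components it coincides with the paper's proof: the verification that $\Gamma N$ is $\kappa$-regular (via the disjoint refinement of a $\kappa$-small cover, which is exactly the argument the paper repeats from Lurie) and the identification $\Gamma\circ M_{\bullet}\cong \mathrm{id}$ via subcanonicity and Theorem \ref{addingconst}. Where you genuinely diverge is the heart of the proof, the object-level isomorphism $\mathcal{C}_{\Gamma N}(\psi(b),\varphi_{\Gamma N}x)\cong Nx(b)$. The paper computes this Hom-set directly from the explicit 2-filtered colimit presentation of $\mathcal{C}_{\Gamma N}$: it identifies the indexing category of the colimit of slice Hom-sets as $(\int ev_b\circ N)^{op}\times(\int ev_{\neg b}\circ N)^{op}$, collapses the second factor, and invokes the co-Yoneda presentation of $ev_b\circ N$ as a colimit of representables. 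You instead exploit extensivity: since $1=\psi(b)\sqcup\psi(\neg b)$ and coproducts in $\mathcal{C}_{\Gamma N}$ are disjoint and pullback-stable, maps $\psi(b)\to\varphi_{\Gamma N}x$ biject with global elements of $\varphi_{\Gamma N}(x\sqcup 1)$ whose pullback of the first coproduct inclusion is exactly $\psi(b)$; then $\Gamma\circ\mathcal{C}_{-}\cong\mathrm{id}$ converts these to sections of $N(x\sqcup 1)=Nx\sqcup 1$ with support $b$ (using Proposition \ref{complemented} and the explicit description of $\psi^{-1}$ to match supports across $\psi$), which are precisely $Nx(b)$. This is a sound and arguably cleaner route: it uses Theorem \ref{addingconst} as a black box and avoids the colimit bookkeeping entirely, and one can check it produces the same map as the paper's $\theta_N$ (a class $[f]$ marked by $s_1:\widehat{b}\to Nz_1$ goes to $Nf\circ s_1$ in both). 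What the paper's explicit colimit formula buys, and what your plan defers, is the naturality in $N$: the commutativity of the back square $\psi_{N'}\circ H=\restr{\mathcal{C}_{\Gamma\alpha}}{}\circ\psi_N$ and of the final 3-cell occupies the bulk of the paper's proof and rests on two genuine computations — unfolding $\Gamma\alpha$ via the mate identity $\alpha=(\varepsilon N'\circ\alpha)\circ\eta N$, and checking that $\Gamma\alpha_2$ carries the section $\widehat{b}\sqcup\widehat{\neg b}\xrightarrow{!\sqcup !}\widehat{\top}\sqcup\widehat{\top}$ to the one indexed by $H(b)$. Calling this ``bookkeeping'' undersells it somewhat, though your coproduct description would in fact streamline it (the back-square check at $x=2$ becomes the same support computation you already need, now performed after applying $H^*$), and you have correctly named all the ingredients required to complete it.
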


\begin{proof}
    If $N:\mathcal{C}\to Sh(B)$ is $(\kappa ,\kappa )$-coherent, where $B$ is a $(\kappa ,\kappa )$-coherent Boolean-algebra, then $\Gamma N$ is $\kappa $-regular, as $\Gamma $ is $\kappa $-regular: it is $\kappa $-lex, we need that it preserves effective epis. This is \cite[Proposition 9.(2),Lecture 16X]{lurie}. We repeat the argument: if $\alpha :F\Rightarrow G$ is an epimorphism of sheaves then for any $s\in G(\top )$ there is a $<\kappa $ covering family $(u_i\to \top )_{i<\gamma <\kappa }$ such that for each $i$: $\restr{s}{u_i}$ has a preimage in $F(u_i)$. But $(u_i)_i$ has a disjoint refinement of the same cardinality with the same property. Then the lifts are automatically compatible, hence they glue together to a lift of $s$ in $F(\top )$. It follows that both $M_{\bullet }$ and $\Gamma $ are well-defined functors. 
    
    $\Gamma \circ M_{\bullet }$ is naturally isomorphic to identity as $\mathcal{C}\xrightarrow{\varphi _F}\mathcal{C}_F\xrightarrow{\Gamma }\mathbf{Set}$ is isomorphic to $F$ (naturally wrt.~$\alpha $, see Theorem \ref{addingconst}) and the topology on $\mathcal{C}_F$ is subcanonical.

    It remains to check that $M_{\bullet }\circ \Gamma $ is naturally isomorphic to identity. Let $\psi =\psi _N  :B\to Sub_{\mathcal{C}_{\Gamma N}}^{\neg }(1)$ be the isomorphism from Proposition \ref{psi}. Note that $\psi _*: Sh(Sub_{\mathcal{C}_{\Gamma N}}^{\neg }(1))\to Sh(B)$ is an isomorphism of categories.
    
    It suffices to find natural isomorphisms $\theta _N: N\Rightarrow \psi _* \circ M_{\Gamma N}$ fitting into the diagram: 

\[\begin{tikzcd}
	&&&&&& {Sh(B)} \\
	&&&&&&&& {Sh(B')} \\
	{\mathcal{C}} &&&&&& {Sh(Sub_{\mathcal{C}_{\Gamma N}}^{\neg }(1))} \\
	&&&&&&&& {Sh(Sub_{\mathcal{C}_{\Gamma N'}}^{\neg }(1))}
	\arrow["{H_*}"{description}, from=2-9, to=1-7]
	\arrow[""{name=2, anchor=center, inner sep=0}, "N"{description, pos=0.4}, curve={height=-12pt}, from=3-1, to=1-7]
	\arrow[""{name=3, anchor=center, inner sep=0}, "{N'}"{description, pos=0.4}, curve={height=-6pt}, from=3-1, to=2-9]
	\arrow[""{name=4, anchor=center, inner sep=0}, "{M_{\Gamma N}}"{description, pos=0.4}, from=3-1, to=3-7]
	\arrow[""{name=5, anchor=center, inner sep=0}, "{M_{\Gamma N'}}"{description, pos=0.4}, curve={height=18pt}, from=3-1, to=4-9]
	\arrow[""{name=0, anchor=center, inner sep=0}, "{\psi _*}"{description, pos=0.6}, from=3-7, to=1-7]
	\arrow[""{name=1, anchor=center, inner sep=0}, "{\psi _*}"{description}, from=4-9, to=2-9]
	\arrow["{(\restr{\mathcal{C}_{\Gamma \alpha }}{ })_*}"{description}, from=4-9, to=3-7]
	\arrow["{=}"{description}, draw=none, from=1, to=0]
        \arrow["{\alpha }"{pos=0.2}, shift right=4, shorten <=0pt, shorten >=35pt, Rightarrow, from=2, to=3]
        \arrow["{\mu _{\alpha }}"{pos=0.2}, shift right=4, shorten <=0pt, shorten >=50pt, Rightarrow, from=4, to=5]
        \arrow["{\theta _{N }}"{description, pos=0.7}, shift right=5, shorten <=20pt, shorten >=0pt, Rightarrow, from=2, to=4]
        \arrow["{\theta _{N '}}"{description}, shift right=11, shorten <=11pt, shorten >=9pt, Rightarrow, from=3, to=5]
\end{tikzcd}
\]

First let us check that the back square indeed commutes, i.e.~that given a 1-cell $(H_*,\alpha )$ we have $\psi _{N'}\circ H =\restr{\mathcal{C}_{\Gamma \alpha }}{} \circ \psi _N$. Fix $b\in B$. The subobject of $1_{\mathcal{C}_{\Gamma N}}$ represented as $[1\sqcup \emptyset \hookrightarrow 1\sqcup 1^{(\restr{*}{b},\restr{*}{\neg b})}]$ is taken to $[1\sqcup \emptyset \hookrightarrow 1\sqcup 1^{\Gamma \alpha ((\restr{*}{b},\restr{*}{\neg b}))}]$ by (the restriction of) $\mathcal{C}_{\Gamma \alpha }$, so we are left to prove $\Gamma \alpha ((\restr{*}{b},\restr{*}{\neg b}))=(\restr{*}{H(b)},\restr{*}{\neg H(b)})\in \Gamma N2=\Gamma 2$.

$\Gamma \alpha $ can be written as
\[
\adjustbox{scale=0.9}{
\begin{tikzcd}
	&& {Sh(B)} & {Sh(B)} \\
	{\mathcal{C}} &&&& {=} & {\mathbf{Set}} \\
	&& {Sh(B')} & {Sh(B')}
	\arrow[""{name=0, anchor=center, inner sep=0}, Rightarrow, no head, from=1-3, to=1-4]
	\arrow["{H^*}"', from=1-3, to=3-3]
	\arrow["\Gamma", from=1-4, to=2-6]
	\arrow[""{name=1, anchor=center, inner sep=0}, "N", from=2-1, to=1-3]
	\arrow[""{name=2, anchor=center, inner sep=0}, "{N'}"', from=2-1, to=3-3]
	\arrow[""{name=3, anchor=center, inner sep=0}, Rightarrow, no head, from=3-3, to=3-4]
	\arrow["{H_*}"', from=3-4, to=1-4]
	\arrow["\Gamma"', from=3-4, to=2-6]
	\arrow["\eta"', shorten <=9pt, shorten >=9pt, Rightarrow, from=0, to=3]
	\arrow["{\varepsilon \alpha}", shorten <=4pt, shorten >=4pt, Rightarrow, from=1, to=2]
\end{tikzcd}
}
\]

Therefore its component at $x$ is given by
\[
Sh(B)(1,Nx)\xrightarrow{H^*} Sh(B')(1,H^*Nx)\xrightarrow{(\varepsilon \alpha )_x\circ -}Sh(B')(1,N'x)
\]
When $x=2$ the second map is an isomorphism (post-composing the comparison map between the two choices of $1\sqcup 1$), so $\Gamma \alpha _2$ takes $\widehat{b}\sqcup \widehat{\neg b}\xrightarrow{!\sqcup !} \widehat{\top }\sqcup \widehat{\top }$ to its $H^*$-image, which is $\widehat{H(b)}\sqcup \widehat{\neg H(b)}\xrightarrow{!\sqcup !} \widehat{\top }\sqcup \widehat{\top }$ and this is what we wanted to prove.

Now we should define $\theta _N:N\Rightarrow \psi _*\circ M_{\Gamma N}$. Fix $x\in \mathcal{C}$ and $b\in B$. Then the components should be maps
\begin{multline*}
\theta _{N,x,b}:Nx(b)\to (\psi _*\circ M_{\Gamma N})(x)(b)=\mathcal{C}_{\Gamma N}([1\sqcup \emptyset \to 1\sqcup 1 ^{(\restr{*}{b},\restr{*}{\neg b})}],\varphi _{\Gamma N}(x) )=\\ =\mathcal{C}_{\Gamma N}([1\sqcup \emptyset \to 1\sqcup 1 ^{(\restr{*}{b},\restr{*}{\neg b})}],[x\sqcup x \to 1\sqcup 1 ^{(\restr{*}{b},\restr{*}{\neg b})}])
\end{multline*}
(so these are representatives living in $\faktor{\mathcal{C}}{1\sqcup 1}$, marked with $(\restr{*}{b},\restr{*}{\neg b})\in \Gamma N(2)$). The Hom-sets in $\mathcal{C}_{\Gamma N}$ are computed as the ($\kappa $-filtered) colimit of Hom-sets in the slices. The colimit is indexed with pairs $(h:z\to 1\sqcup 1,s\in \Gamma Nz)$ where $s$ is taken to $(\restr{*}{b},\restr{*}{\neg b})$ by $\Gamma N(h)$. Such a map is of the form $z_1\sqcup z_2\xrightarrow{!\sqcup !}1\sqcup 1$ and such a global section is the same as a pair of maps $(s_1:\widehat{b}\to Nz_1, s_2:\widehat{\neg b}\to Nz_2)$.

Consequently, the colimit is indexed over $(\int ev_b \circ N)^{op}\times (\int ev_{\neg b} \circ N)^{op}$. At such a pair $((z_1,s_1),(z_2,s_2))$ the corresponding Hom-set (in $\faktor{\mathcal{C}}{z_1\sqcup z_2}$) is between the pullbacks: $z_1\sqcup \emptyset $ and $(z_1\sqcup z_2)\times x$ (sitting over $z_1\sqcup z_2$ via inclusion resp.~projection). This coincides with the Hom-set $\mathcal{C}(z_1,x)$.

We have arrived to the conclusion 
\begin{multline*}
    \psi _* \circ M_{\Gamma N}(x)(b)\cong colim _{((z_1,s_1),(z_2,s_2))\in(\int ev_b \circ N)^{op}\times (\int ev_{\neg b} \circ N)^{op}}\mathcal{C}(z_1,x)  \\ \cong colim _{(z,s)\in (\int ev_b \circ N)^{op}}\mathcal{C}(z,x)
\end{multline*}
which isomorphism is natural in $x$ and in $b$ and we define $\theta _{N,\bullet ,b}$ as the inverse of the well-known isomorphism (writing $ev_b\circ N$ as a colimit of representables) whose $x$-component $colim _{(z,s)\in (\int ev_b \circ N)^{op}}\mathcal{C}(z,x) \to (ev_b \circ N)(x)=Nx(b)$ is sending an element $f:z\to x$ of the $s:\widehat{b}\to Nz$-indexed Hom-set to the composite $\widehat{b}\xrightarrow{s}Nz\xrightarrow{Nf}Nx$.

It remains to check naturality in $\alpha $, i.e.~to check that the above 3-cell commutes. We need the commutativity of

\[
\adjustbox{width=\textwidth}{
\begin{tikzcd}
	{\mathcal{C}_{\Gamma N}\left(\left[\overset{1\sqcup \emptyset}{ \underset{ 1\sqcup 1}{\downarrow }}\right]^{(\restr{*}{b},\restr{*}{\neg b})},\varphi _{\Gamma  N}(x)\right)} && {\mathcal{C}_{\Gamma N'}\left(\left[\overset{1\sqcup \emptyset}{ \underset{ 1\sqcup 1}{\downarrow }}\right]^{(\restr{*}{H(b)},\restr{*}{\neg H(b)})},\varphi _{\Gamma  N'}(x)\right)} \\
	\\
	{\underset{s_1\sqcup s_2:\ \widehat{b}\sqcup \widehat{\neg b}\to Nz_1\sqcup Nz_2}{colim}\faktor{\mathcal{C}}{z_1\sqcup z_2}\left(\overset{z_1\sqcup \emptyset}{ \underset{z_1\sqcup z_2}{\downarrow }}, \overset{(z_1\sqcup z_2)\times x}{\underset{ z_1\sqcup z_2}{\downarrow }}\right)} && {\underset{s'_1\sqcup s'_2:\ \widehat{H(b)}\sqcup \widehat{\neg H(b)}\to N'z_1\sqcup N'z_2}{colim}\faktor{\mathcal{C}}{z_1\sqcup z_2}\left(\overset{z_1\sqcup \emptyset}{ \underset{z_1\sqcup z_2}{\downarrow }}, \overset{(z_1\sqcup z_2)\times x}{\underset{ z_1\sqcup z_2}{\downarrow }}\right)} \\
	\\
	{colim _{s_1:\ \widehat{b}\to Nz_1}\mathcal{C}(z_1,  x)} && {colim _{s'_1:\ \widehat{H(b)}\to N'z_1}\mathcal{C}(z_1,  x)} \\
	\\
	{Nx(b)} && {N'x(H(b))}
	\arrow["{(\psi _* (\mu_{\alpha }))_{x,b} =\mu_{\alpha ,x,\psi (b)} =\mathcal{C}_{\Gamma \alpha }(-)}", from=1-1, to=1-3]
	\arrow[Rightarrow, no head, from=1-1, to=3-1]
	\arrow[Rightarrow, no head, from=1-3, to=3-3]
	\arrow["{\langle i_{\Gamma \alpha (s_1\sqcup s_2)}\rangle _{s_1\sqcup s_2}}", from=3-1, to=3-3]
	\arrow[Rightarrow, no head, from=3-1, to=5-1]
	\arrow[Rightarrow, no head, from=3-3, to=5-3]
	\arrow["{\langle i_{\widehat{H(b)}\xrightarrow{H^*s_1} H^*Nz_1\xrightarrow{\varepsilon \alpha }N'z_1 }\rangle _{s_1}}", from=5-1, to=5-3]
	\arrow["{\overline{\theta _{N,x,b}}}"', from=5-1, to=7-1]
	\arrow["{\overline{(H_*(\theta _{N'}))_{x,b}}=\overline{\theta _{N',x,H(b)}}}", from=5-3, to=7-3]
	\arrow["{\alpha _{x,b}}"', from=7-1, to=7-3]
\end{tikzcd}
}
\]
Here the two upper squares come from the previously discussed identifications. The commutativity of the lower square means
\[
\alpha _{x,b}(\widehat{b}\xrightarrow{s_1}Nz_1\xrightarrow{Nf}Nx)\ =\ \widehat{H(b)}\xrightarrow{H^*s_1}H^*Nz_1\xrightarrow{\varepsilon \alpha _{z_1}}N'z_1\xrightarrow{N'f}N'x
\]
This is the case, as $\alpha =(\varepsilon \alpha )\circ \eta N$ (where $\varepsilon \alpha $ stands for $\varepsilon N' \circ \alpha $) and therefore $\widehat{b}\xrightarrow{s_1}Nz_1\xrightarrow{Nf}Nx$ is taken to $\widehat{H(b)}\xrightarrow{H^*s_1}H^*Nz_1\xrightarrow{H^*Nf}H^*Nx\xrightarrow{\varepsilon \alpha _x}N'x$ and this is what we want by the naturality of $\varepsilon \alpha $.

\end{proof}

\section{Applications}

\subsection{\texorpdfstring{$Sh(B)$}{Sh(B)}-valued completeness}

(Our version of) Lurie's theorem says that when $\kappa $ is weakly compact, $\kappa $-regular $\mathbf{Set}$-valued functors on $(\kappa ,\kappa )$-co\-he\-rent categories with $\kappa $-small disjoint coproducts are the same as $Sh(\mathbf{BAlg}_{\kappa ,\kappa })$-models. Makkai's theorems (see below) say that there are many $\kappa $-regular $\mathbf{Set}$-valued functors on a $\kappa $-regular category. These together imply that there are many $Sh(\mathbf{BAlg}_{\kappa ,\kappa })$-models on a $(\kappa ,\kappa )$-coherent category.

\begin{theorem}[{\cite[Theorem 5.1]{barrex}}]
\label{m1}
    Let $\mathcal{C}$ be $\kappa $-regular and Barr-exact. Then the evaluation functor $ev_{\bullet }:\mathcal{C}\to [\mathbf{Reg}_{\kappa }(\mathcal{C},\mathbf{Set}), \mathbf{Set}]$ is $\kappa $-regular, fully faithful and its essential image consists of those functors $ \mathbf{Reg}_{\kappa }(\mathcal{C},\mathbf{Set}) \to \mathbf{Set}$ which preserve $\kappa $-filtered colimits and all products.
\end{theorem}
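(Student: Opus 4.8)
The plan is to read this as a biduality theorem and prove it in three stages of increasing difficulty: a soft formal stage, a conservativity stage powered by completeness, and a representability stage carrying the real weight. Write $\mathbf{Mod}=\mathbf{Reg}_{\kappa}(\mathcal{C},\mathbf{Set})$ for the domain of the target and $ev_c$ for $M\mapsto M(c)$. The formal stage records that $\kappa$-small limits, $\kappa$-filtered colimits and arbitrary products of regular functors are computed pointwise and remain regular: $\kappa$-small limits commute with effective epis by the very definition of $\kappa$-regularity, $\kappa$-filtered colimits in $\mathbf{Set}$ commute with $\kappa$-small limits and preserve surjections, and a product of surjections is a surjection. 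Hence these (co)limits exist in $\mathbf{Mod}$ and are pointwise, so each $ev_c:\mathbf{Mod}\to\mathbf{Set}$ preserves $\kappa$-filtered colimits and all products, placing $ev_{\bullet}$ inside the claimed subcategory; and since (co)limits in $[\mathbf{Mod},\mathbf{Set}]$ are also pointwise while every $M$ is $\kappa$-regular, $ev_{\bullet}$ is itself $\kappa$-regular.

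The second stage is faithfulness, which rests on the assertion that the family of $\kappa$-regular functors $\mathcal{C}\to\mathbf{Set}$ is jointly faithful and jointly conservative. I would derive this from a $\kappa$-regular completeness theorem: given distinct parallel arrows $g,g':c\to d$ (or a non-invertible mono, or a non-epi) one constructs a $\kappa$-regular $\mathbf{Set}$-model separating them, by a transfinite forcing-style construction discharging the $\kappa$-many coherence commitments one at a time, using $\kappa$-lexness at successor steps and the regularity axiom on transfinite cocompositions at the limit steps. Joint faithfulness immediately yields faithfulness of $ev_{\bullet}$, and joint conservativity yields that $ev_f$ is an effective epi exactly when $f$ is.

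The third stage, combining fullness with the essential-image characterization, is the crux. For fullness, a natural transformation $\eta:ev_c\Rightarrow ev_d$ has at each $M$ a graph $\{(x,\eta_M x)\}\subseteq M(c\times d)$, and these assemble into a subfunctor $G\hookrightarrow ev_{c\times d}$; the heart of the matter is that $G$ is \emph{definable}, i.e.\ of the form $ev_R$ for a subobject $R\hookrightarrow c\times d$ of $\mathcal{C}$. Once this is known, naturality forces the projection $R\to c$ to be both an effective epi (totality) and a mono (single-valuedness), hence an isomorphism by regularity, so $R$ is the graph of an arrow $g:c\to d$ with $ev_g=\eta$. For the essential image I must show conversely that every $T:\mathbf{Mod}\to\mathbf{Set}$ preserving $\kappa$-filtered colimits and products is isomorphic to some $ev_c$; such a $T$ is $\kappa$-accessible and product-preserving, equivalently \emph{elementary} in the sense relevant here, since an ultraproduct is a $\kappa$-filtered colimit of products. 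Barr-exactness enters decisively at precisely this point: without it $\mathcal{C}$ would embed only into the model-dual of its exact completion, and there would be extra product-and-filtered-colimit-preserving functors corresponding to quotients of equivalence relations not present in $\mathcal{C}$; exactness guarantees that every such quotient, and hence every such $T$, is already represented by an object of $\mathcal{C}$.

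The main obstacle is this third stage: producing the representing object for a given product-and-$\kappa$-filtered-colimit-preserving functor, and establishing the definability of the graph functor $G$ above. Both reduce to one genuinely hard input, a {\L}o\'s-type theorem computing $\kappa$-filtered colimits of products in $\mathbf{Mod}$ together with the definability of the functors they preserve, and it is the organization of this model-theoretic content into a clean categorical statement, rather than any isolated estimate, that constitutes the difficulty.
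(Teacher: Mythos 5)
The paper never proves this statement: it is imported verbatim from Makkai (\cite[Theorem 5.1]{barrex}) as a black box, alongside Theorems \ref{m2} and \ref{m3}, so there is no internal proof to compare against and your attempt has to be judged against Makkai's actual argument. Your first stage is correct and routine: products and $\kappa$-filtered colimits of $\kappa$-regular functors are pointwise and again $\kappa$-regular (products and $\kappa$-filtered colimits in $\mathbf{Set}$ preserve $\kappa$-small limits and surjections), so each $ev_c$ lands in the stated subcategory and $ev_{\bullet}$ is $\kappa$-regular. Your second stage is also fine, but note it is not something you need to re-derive by a forcing construction: joint conservativity of $\kappa$-regular $\mathbf{Set}$-valued functors is exactly Makkai's Theorem 2.3, quoted in this very paper as Theorem \ref{m3}, and faithfulness plus iso-reflection follow formally via equalizers.

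The genuine gap is your third stage, which is where the entire content of the theorem lives and which you do not prove. Both the definability of the graph subfunctor $G\hookrightarrow ev_{c\times d}$ (fullness) and the representability of an arbitrary product-and-$\kappa$-filtered-colimit-preserving $T$ (essential image) are deferred to a single unproven ``{\L}o\'s-type theorem,'' so the proposal is a roadmap rather than a proof. Moreover, the proposed hard input is wrong-headed for uncountable $\kappa$: your identification of product-and-$\kappa$-filtered-colimit-preserving functors with ``elementary'' ones via the remark that an ultraproduct is a $\kappa$-filtered colimit of products presupposes a supply of nonprincipal $\kappa$-complete ultrafilters, which for $\kappa>\aleph_0$ is a large-cardinal hypothesis --- indeed this paper itself has to assume $\kappa$ strongly compact (or $\kappa=\aleph_0$) in Theorem \ref{regtoprodcoh} precisely to get ``enough $\kappa$-complete ultrafilters.'' Theorem \ref{m1}, by contrast, holds for every regular $\kappa$, and the point of Makkai's infinitary generalization is exactly to replace the ultraproduct machinery of his finitary strong conceptual completeness by products and $\kappa$-filtered colimits as primitive operations; an ultraproduct-based reduction therefore cannot close your gap at the stated level of generality. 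You correctly locate where Barr-exactness must enter (without it, objects of the exact completion give extra functors in the would-be image, since $\mathbf{Reg}_{\kappa}(\mathcal{C},\mathbf{Set})\simeq\mathbf{Reg}_{\kappa}(\mathcal{C}_{ex},\mathbf{Set})$), but locating the obstacle is not the same as overcoming it.
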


\begin{theorem}[{\cite[Proposition 6.4]{barrex}}]
\label{m2}
    Let $\mathcal{C},\mathcal{D}$ be $\kappa $-regular, Barr-exact. Given a functor $F:\mathbf{Reg}_{\kappa }(\mathcal{D},\mathbf{Set})\to \mathbf{Reg}_{\kappa }(\mathcal{C},\mathbf{Set})$, it is isomorphic to $-\circ M$ for some $\kappa $-regular $M:\mathcal{C}\to \mathcal{D}$ iff it preserves $\kappa $-filtered colimits and all products.
\end{theorem}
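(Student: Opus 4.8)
The plan is to deduce both implications from the reconstruction result Theorem \ref{m1}, applied separately to $\mathcal{C}$ and to $\mathcal{D}$. The preliminary observation is that in $\mathbf{Reg}_{\kappa }(\mathcal{C},\mathbf{Set})$ both products and $\kappa $-filtered colimits are computed pointwise: a product of surjections is a surjection (by choice), a $\kappa $-filtered colimit of surjections is a surjection (filtered colimits are exact in $\mathbf{Set}$), and in either case $\kappa $-small limits are preserved since they commute with products and with $\kappa $-filtered colimits. Consequently each evaluation functor $ev_{c}:\mathbf{Reg}_{\kappa }(\mathcal{C},\mathbf{Set})\to \mathbf{Set}$ preserves all products and all $\kappa $-filtered colimits, and the same holds over $\mathcal{D}$.

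For the forward implication, suppose $F\cong -\circ M$ with $M$ a $\kappa $-regular functor. Then $N\circ M$ is $\kappa $-regular whenever $N$ is, so $-\circ M$ genuinely lands in $\mathbf{Reg}_{\kappa }(\mathcal{C},\mathbf{Set})$; and because products and $\kappa $-filtered colimits are pointwise on both sides, $(\prod _j N_j)\circ M$ and $(colim _j N_j)\circ M$ are the pointwise product resp.\ colimit of the $N_j\circ M$. Hence $F$ preserves products and $\kappa $-filtered colimits.

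For the converse, assume $F$ preserves products and $\kappa $-filtered colimits. For each object $c\in \mathcal{C}$ the composite $ev_{c}\circ F:\mathbf{Reg}_{\kappa }(\mathcal{D},\mathbf{Set})\to \mathbf{Set}$ then preserves all products and all $\kappa $-filtered colimits, so by Theorem \ref{m1} applied to $\mathcal{D}$ it lies in the essential image of the evaluation embedding: there is an object $M(c)\in \mathcal{D}$, unique up to unique isomorphism, with $ev_{c}\circ F\cong ev^{\mathcal{D}}_{M(c)}$, that is $F(N)(c)\cong N(M(c))$ naturally in $N$. Full faithfulness of the evaluation embedding of $\mathcal{D}$ promotes $c\mapsto M(c)$ to a functor $M:\mathcal{C}\to \mathcal{D}$ and assembles the pointwise isomorphisms into a natural isomorphism $F\cong -\circ M$; this functoriality and naturality bookkeeping is where most of the routine care is needed.

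Finally I would verify that $M$ is $\kappa $-regular. Precomposition $-\circ F$ between the presheaf categories $[\mathbf{Reg}_{\kappa }(\mathcal{C},\mathbf{Set}),\mathbf{Set}]$ and $[\mathbf{Reg}_{\kappa }(\mathcal{D},\mathbf{Set}),\mathbf{Set}]$ preserves all pointwise limits and colimits, so the composite $(-\circ F)\circ ev^{\mathcal{C}}_{\bullet }\cong ev^{\mathcal{D}}_{\bullet }\circ M$ is $\kappa $-regular, being built from $\kappa $-regular functors via Theorem \ref{m1} for $\mathcal{C}$. Since $ev^{\mathcal{D}}_{\bullet }$ is fully faithful it reflects $\kappa $-small limits, so $M$ preserves $\kappa $-small limits; and it reflects effective epis, because it carries the (effective epi, mono) image factorization of a map $g$ to that of $ev^{\mathcal{D}}_{\bullet }(g)$, whence if the latter is an effective epi the monic part is sent to an isomorphism and is therefore an isomorphism by full faithfulness, forcing $g$ to be an effective epi. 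Combining these with the fact that $ev^{\mathcal{D}}_{\bullet }\circ M$ preserves effective epis shows $M$ does too. The main obstacle is precisely this last paragraph --- threading $\kappa $-regularity of $M$ through the two reconstruction embeddings, especially the reflection of effective epimorphisms --- alongside the naturality bookkeeping of the previous step.
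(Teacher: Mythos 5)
Your proposal is correct, but there is nothing internal to compare it against: the paper does not prove this statement at all --- it is imported verbatim from Makkai's paper (\cite[Proposition 6.4]{barrex}), alongside Theorem \ref{m1}, and both are used as black boxes. Your derivation of Theorem \ref{m2} from Theorem \ref{m1} is sound and is essentially the expected duality-style argument: the forward direction reduces to the pointwise computation of products and $\kappa $-filtered colimits in $\mathbf{Reg}_{\kappa }(-,\mathbf{Set})$ (which you justify correctly --- products of surjections are surjections by choice, $\kappa $-filtered colimits commute with $\kappa $-small limits in $\mathbf{Set}$); the converse correctly factors $c\mapsto ev_c^{\mathcal{C}}\circ F$ through the essential image of the fully faithful $ev_{\bullet }^{\mathcal{D}}$, and your reflection arguments go through: fully faithful functors reflect limit cones, and since $ev_{\bullet }^{\mathcal{D}}$ is $\kappa $-regular it sends the (effective epi, mono) factorization of $g$ to that of $ev_{\bullet }^{\mathcal{D}}(g)$, so if the latter is an effective epi the monic part is forced to be invertible (functor categories into $\mathbf{Set}$ are balanced, or use uniqueness of image factorizations) and conservativity of a fully faithful functor finishes it. Two minor points worth flagging if you were to write this up: the functor category $[\mathbf{Reg}_{\kappa }(\mathcal{D},\mathbf{Set}),\mathbf{Set}]$ has a large domain, so its hom-classes need the same size caveat already implicit in the statement of Theorem \ref{m1} (the argument is pointwise, so nothing breaks); and the ``functoriality and naturality bookkeeping'' you defer is genuinely routine --- full faithfulness makes $M$ on arrows and the naturality of $F\cong -\circ M$ in both variables automatic --- so your proof is complete in outline.
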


\begin{theorem}[{\cite[Theorem 2.3]{barrex}}]
\label{m3}
    For any $\kappa $-regular $\mathcal{C}$ there is a conservative $\kappa $-regular functor $\mathcal{C}\to \mathbf{Set}^I$.
\end{theorem}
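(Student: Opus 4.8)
The plan is to reduce \emph{conservativity} to a single separation property of subobjects, and then realise that property by a ``generic model'' built as an iterated-slice (Henkin) construction, in the same spirit as Section~3.

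\textbf{Reduction to separating subobjects.} Assume $\mathcal{C}$ is small, so that its proper subobjects form a set $I$. A functor $\langle F_i\rangle_{i\in I}:\mathcal{C}\to\mathbf{Set}^I$ is $\kappa$-regular iff each $F_i$ is (all limits and all effective epis in $\mathbf{Set}^I$ are pointwise), and it is conservative iff the family $(F_i)_i$ jointly reflects isomorphisms. In a regular category a map $f=m\circ e$ (mono after effective epi) is invertible iff $m$ and $e$ are, each $F_i$ preserves this factorisation, and $e$ fails to be iso exactly when the diagonal into its kernel pair --- which the $\kappa$-lex $F_i$ preserve --- is a proper subobject. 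Hence it suffices to prove the single statement $(\star)$: for every proper subobject $s:d\hookrightarrow c$ there is a $\kappa$-regular $F:\mathcal{C}\to\mathbf{Set}$ with $F(s)$ not surjective; one then takes $F=\langle F_s\rangle_s$ indexed by $I$.

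\textbf{Passing to a subterminal.} I would fix a proper $s:d\hookrightarrow c$ and move to $\faktor{\mathcal{C}}{c}$, which is again $\kappa$-regular and comes with the $\kappa$-regular pullback functor $!^{-1}:\mathcal{C}\to\faktor{\mathcal{C}}{c}$. Here $s$ becomes a subobject $U=(d\xrightarrow{s}c)\hookrightarrow(id_c)$ of the terminal object, proper because $s$ is. Then $(\star)$ follows once one produces a $\kappa$-regular $G:\faktor{\mathcal{C}}{c}\to\mathbf{Set}$ with $G(U)=\emptyset$: for $F:=G\circ!^{-1}$ is $\kappa$-regular, the generic point $\Delta_c:(id_c)\to!^{-1}(c)$ of Theorem~\ref{delta} gives an element $a\in F(c)$, and pulling $\Delta_c$ back along $!^{-1}(s)$ returns precisely $U$, so that $a\in F(d)$ would force $G(U)\neq\emptyset$. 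Observe moreover that for \emph{any} $\kappa$-regular $G$ and any proper subterminal $V$ one automatically has $G(V)=\emptyset$, since a global element $1\to V$ would split the mono $V\hookrightarrow 1$ and make it invertible; so it is enough to build a $\kappa$-regular $G$ in which $U$ remains a proper subterminal.

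\textbf{Building $G$.} I would construct a chain $(\mathcal{D}_\alpha)_{\alpha<\mu}$ of $\kappa$-regular categories, $\mathcal{D}_0=\faktor{\mathcal{C}}{c}$, with $\kappa$-regular transition functors, for a suitable regular $\mu\ge\kappa$: at a successor step pass from $\mathcal{D}_\alpha$ to the slice $\faktor{\mathcal{D}_\alpha}{z}$ over the domain of a scheduled effective epi $z\twoheadrightarrow 1$, which (via $!^{-1}$, as in Section~3) adjoins a generic global element of $z$, hence a generic splitting of that epi; at limit steps take the $\kappa$-filtered colimit. Set $\mathcal{D}_\infty=\operatorname{colim}_{\alpha<\mu}\mathcal{D}_\alpha$ and $G=\mathcal{D}_\infty(1,-)\circ(\faktor{\mathcal{C}}{c}\to\mathcal{D}_\infty)$. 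A standard bookkeeping (using that $\kappa$ is regular and each $\mathcal{D}_\alpha$ is small) guarantees that every effective epi into $1$ occurring at some stage is split later, so $1$ is projective in $\mathcal{D}_\infty$ --- exactly the projectivity argument already carried out for $1\in\mathcal{C}_F$ in the first theorem of this section --- whence $G$ preserves effective epis; being a composite of $\kappa$-lex functors it is $\kappa$-lex, so $\kappa$-regular. Finally $U$ stays a proper subterminal throughout: slicing over $z$ sends $U$ to $U\times z\hookrightarrow z$, proper unless $z\to 1$ factors through $U$, which is impossible since $z\twoheadrightarrow 1$ is an effective epi and $U$ is proper; and at every limit stage (including $\mu$) an equality $U\cong 1$ would, by $\kappa$-filteredness, already hold at an earlier stage. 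Thus $G(U)=\emptyset$, giving $(\star)$.

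\textbf{Main obstacle.} The delicate point is verifying that $\mathcal{D}_\infty$ is genuinely $\kappa$-regular and that $G$ preserves effective epimorphisms. Preservation of the transfinite-cocomposition axiom (item~iii) of the definition of $\kappa$-regular) is what makes this go through: a cocomposition of length $<\kappa$ in $\mathcal{D}_\infty$ can be chased into a single $\mathcal{D}_\alpha$ (the colimit is $\kappa$-filtered, $cf(\mu)\ge\kappa$), where it is an effective epi --- the same ``chase to a single slice'' idea used above --- and the regularity of $\kappa$ is also what lets the bookkeeping close off all witnessing tasks. This infinitary step is the heart of the matter, and is why the statement is Makkai's; the finitary case is Barr's embedding theorem, which could alternatively be invoked through a $\kappa$-version of the covering theorem for $Sh(\mathcal{C},\tau_{reg})$.
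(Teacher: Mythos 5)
First, a remark on the comparison: the paper does not prove this statement at all — it is imported verbatim from Makkai's \cite{barrex} — so your argument has to stand on its own. Much of it does: the reduction of conservativity to the separation statement $(\star)$ via the image factorization and the kernel-pair diagonal is correct, as is the passage to $\faktor{\mathcal{C}}{c}$, where $s$ becomes a proper subterminal $U$ and the generic point $\Delta _c$ shows that $G(U)=\emptyset $ forces $F(s)$ non-surjective. (One local slip: it is \emph{not} true that every $\kappa $-regular $G$ kills proper subterminals — a product projection $\mathbf{Set}\times \mathbf{Set}\to \mathbf{Set}$ sends the proper subterminal $(1,\emptyset )$ to $1$. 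What is true, and what your construction actually uses, is that a subterminal with a global element is terminal, so $\mathcal{D}_{\infty }(1,-)$ kills proper subterminals; an element of the set $G(V)$ is a global element of $V$ only for hom-functors out of $1$.)

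The genuine gap is at the \emph{interior} limit stages of your chain, not at the final colimit. You arrange $\mathrm{cf}(\mu )\geq \kappa $ so that the chase-to-a-single-stage argument works at the top, but for uncountable $\kappa $ the chain unavoidably passes through limit ordinals of cofinality $<\kappa $ — already $\lambda =\omega $ — where the colimit is filtered but not $\kappa $-filtered. There the chase is unavailable: a $\kappa $-small diagram whose objects have unbounded birth stages lives in no single $\mathcal{D}_{\alpha }$, and a merely filtered colimit of $\kappa $-lex categories along $\kappa $-lex functors need not be $\kappa $-lex (this is precisely why the paper only ever forms $\kappa $-filtered colimits of slices, indexed by $(\int F)^{op}$ for $F$ $\kappa $-lex). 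So the induction hypothesis "each $\mathcal{D}_{\alpha }$ is $\kappa $-regular and the transitions are $\kappa $-regular" already breaks at stage $\omega $, and with it the stagewise factorization argument (needed to see that effective epis onto $1$ in $\mathcal{D}_{\infty }$ come from stages), the $\kappa $-lexness of the composite $\iota $, and hence the $\kappa $-regularity of $G$. For $\kappa =\aleph _0$ every filtered colimit is $\aleph _0$-filtered and your proof goes through — but that is the classical Barr-type case; the failure is exactly in the infinitary case the theorem is about. Repairing it needs a real additional idea, not bookkeeping: e.g.~Makkai's route, which runs the Henkin construction on $\kappa $-regular \emph{theories}, taking unions at limit stages (harmless syntactically, since the syntactic category of the union is $\kappa $-regular by construction); or reorganizing the entire forcing as one $\kappa $-filtered colimit of slices of $\mathcal{C}$, using that iterated slices are again slices of $\mathcal{C}$. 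Beware that the naive one-step version of the latter — indexing over all of $(\faktor{\mathcal{C}}{c})^{op}$, which is $\kappa $-filtered since $\faktor{\mathcal{C}}{c}$ is $\kappa $-lex — does split every effective epi onto $1$, but it also contains the stage $u=d$ itself, where $U$ pulls back to the terminal object, so $U$ acquires a global element and the argument collapses; cutting the index down to composites of effective epis ("covers") destroys the easy filteredness, and making that precise is essentially the hard content of Makkai's theorem.
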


We get that $(\kappa ,\kappa )$-coherent categories are complete wrt.~$Sh(\mathbf{BAlg}_{\kappa ,\kappa })$-models:

\begin{theorem}
\label{completeness}
    $\kappa $ is weakly compact. Let $\mathcal{C}$ be a $(\kappa ,\kappa )$-coherent category. Then given $v\hookrightarrow x\hookleftarrow u$ we have $v\leq u$ iff for any $(\kappa ,\kappa )$-coherent Boolean-algebra $B$ and any $(\kappa ,\kappa )$-coherent functor $M:\mathcal{C}\to Sh(B)$: $Mv\leq Mu$ holds.
\end{theorem}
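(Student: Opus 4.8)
The nontrivial direction is the right-to-left one, which I would prove by contraposition: assuming $v\not\leq u$ in $\mathcal{C}$, I will build a single $(\kappa,\kappa)$-coherent functor $M\colon\mathcal{C}\to Sh(B)$ with $Mv\not\leq Mu$. The forward implication is free, since a $(\kappa,\kappa)$-coherent functor is $\kappa$-lex, hence preserves monomorphisms and the meet $v\wedge u$; so $v\leq u$, i.e.\ $v\wedge u=v$, forces $Mv\wedge Mu=Mv$, that is $Mv\leq Mu$. The same computation, read in reverse, shows that any conservative $\kappa$-lex functor \emph{reflects} the subobject order: if $G$ is $\kappa$-lex and conservative and $Gv\leq Gu$, then $G(v\wedge u)=Gv$, so the mono $v\wedge u\hookrightarrow v$ becomes invertible under $G$ and is therefore invertible, giving $v\leq u$. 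I will use this remark repeatedly.

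Theorem \ref{Lurie} needs $\kappa$-small disjoint coproducts, which $\mathcal{C}$ need not have, so first I would pass to the free disjoint-coproduct completion $J\colon\mathcal{C}\hookrightarrow\mathcal{C}'$, whose objects are the $\kappa$-small families $(c_i)_{i\in I}$ of objects of $\mathcal{C}$. Since $\kappa$ is weakly compact we have $(<\kappa)^{<\kappa}=(<\kappa)$, so $\kappa$-small products of families are again $\kappa$-small families (indexed by the product of the index sets), and one checks that $\mathcal{C}'$ is a small $(\kappa,\kappa)$-coherent category with $\kappa$-small disjoint coproducts, $J$ being full, faithful and $(\kappa,\kappa)$-coherent. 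As $J$ is conservative and $\kappa$-lex, the remark above gives $Jv\not\leq Ju$ in $\mathcal{C}'$, and it suffices to separate $Jv$ from $Ju$ by a model of $\mathcal{C}'$ and precompose with $J$.

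Now $\mathcal{C}'$ is in particular $\kappa$-regular, so Makkai's Theorem \ref{m3} supplies a conservative $\kappa$-regular functor $\mathcal{C}'\to\mathbf{Set}^I$, i.e.\ a jointly conservative family $(F_i\colon\mathcal{C}'\to\mathbf{Set})_{i\in I}$ of $\kappa$-regular functors; being jointly conservative and $\kappa$-lex they jointly reflect the subobject order, so some index $i$ satisfies $F_i(Jv)\not\leq F_i(Ju)$. To this $F_i$ I apply Theorem \ref{Lurie}: it yields a $(\kappa,\kappa)$-coherent functor $M_{F_i}\colon\mathcal{C}'\to Sh(B_i)$ on a $(\kappa,\kappa)$-coherent Boolean algebra $B_i$ with $\Gamma M_{F_i}\cong F_i$. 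The global-sections functor $\Gamma$ is $\kappa$-regular (as recorded in the proof of Theorem \ref{Lurie}), hence order-preserving on subobjects, so $M_{F_i}(Jv)\leq M_{F_i}(Ju)$ would yield $F_i(Jv)\cong\Gamma M_{F_i}(Jv)\leq\Gamma M_{F_i}(Ju)\cong F_i(Ju)$, contradicting the choice of $i$. Therefore $M:=M_{F_i}\circ J\colon\mathcal{C}\to Sh(B_i)$ is $(\kappa,\kappa)$-coherent and satisfies $Mv\not\leq Mu$, as required.

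I expect the main obstacle to be the construction and verification of the completion $\mathcal{C}'$: checking that the family construction stays inside small $(\kappa,\kappa)$-coherent categories (the $\kappa$-lex structure uses the inaccessibility of $\kappa$, while the cotree axiom of Definition \ref{lkcoherent} iv) must be verified fiberwise, again invoking weak compactness to bound the branches), and that $J$ is genuinely fully faithful and coherent. Once this reduction is in place, the rest is a formal combination of Theorems \ref{m3} and \ref{Lurie} with the elementary fact that conservative $\kappa$-lex functors reflect, and $\kappa$-regular functors preserve, the order on subobjects.
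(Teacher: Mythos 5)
Your proof is correct and follows the same skeleton as the paper's: reduce to the case of $\kappa$-small disjoint coproducts via a fully faithful / conservative $(\kappa,\kappa)$-coherent embedding $J:\mathcal{C}\to\mathcal{C}'$, use Makkai's Theorem \ref{m3} to extract a $\kappa$-regular $F:\mathcal{C}'\to\mathbf{Set}$ with $F(Jv)\not\leq F(Ju)$ (the paper phrases this as $F(v\cap u)\neq F(v)$ as subobjects of $F(x)$), then conclude by Theorem \ref{Lurie} and the natural isomorphism $F\cong\Gamma M_F$, exactly as you do. The one genuine divergence is the construction of $\mathcal{C}'$: you take the free $\kappa$-small-coproduct completion $Fam_\kappa(\mathcal{C})$, whereas the paper simply takes a small full subcategory of $Sh(\mathcal{C})$ containing the representables and closed under the required operations — legitimate because weak compactness makes the topology of $\kappa$-small effective epimorphic families subcanonical, so $\# Y$ is fully faithful and $(\kappa,\kappa)$-coherent, and the $\kappa$-topos is $(\infty,\kappa)$-coherent with disjoint universal coproducts, so every exactness property is inherited by a suitably closed small full subcategory. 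Your route buys an explicit, presentation-free description of $\mathcal{C}'$, but it concentrates the work exactly where you flag it: effective epis in $Fam_\kappa(\mathcal{C})$ must be characterized componentwise and images built from the $\kappa$-small unions of $\mathcal{C}$ — note the factorization is \emph{not} inherited from the presheaf embedding $Fam_\kappa(\mathcal{C})\hookrightarrow\mathbf{Set}^{\mathcal{C}^{op}}$, since a union of representable subobjects of $Yx$ is in general not a coproduct of representables — and the cotree axiom of Definition \ref{lkcoherent} iv) must be checked, with weak compactness (inaccessibility) bounding both the branching and the inverse limits of index sets along continuous branches. All of this is true (it is the infinitary analogue of the classical fact that the sum completion of a coherent category is coherent), but it is a real verification burden that the paper's one-line closure argument sidesteps; apart from this substitution the two arguments are step-for-step the same.
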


\begin{proof}
    There is a fully faithful $(\kappa ,\kappa )$-coherent $\mathcal{C}\to \mathcal{C}'$ functor where $\mathcal{C}'$ is $(\kappa ,\kappa )$-coherent with $\kappa $-small disjoint coproducts: simply take such a small subcategory in $Sh(\mathcal{C})$. So we assume that $\mathcal{C}$ has $\kappa $-small disjoint coproducts. Assume $v\cap u \neq v$. By Makkai's theorem there is a $\kappa $-regular functor $F:\mathcal{C}\to \mathbf{Set}$ with $F(v\cap u)\neq F(v)$ (as subobjects of $F(x)$). But then $M_F(v\cap u)\neq M_F(v)$ since $F\cong \Gamma M_F$.
\end{proof}

\begin{remark}
    If $\kappa $ is strongly compact then $L_{\kappa ,\kappa }$ satisfies the compactness theorem, and if $\kappa $ is weakly compact then $L_{\kappa ,\kappa }$ satisfies the weak compactness theorem (cf.~\cite[Exercise 20.1.~and Theorem 17.13.]{jech}). From this one can prove that Theorem \ref{completeness} holds with $B=2$, either if
$\mathcal{C}$ is an arbitrary $(\kappa ,\kappa )$-coherent category and $\kappa $ is strongly compact or if $\mathcal{C}$ is a $(\kappa ,\kappa )$-coherent category satisfying a certain smallness assumption and $\kappa $ is weakly compact (see \cite[Proposition 3.1.5.]{inflog}).
\end{remark}

\begin{theorem}
\label{fromintpret}
   $\kappa $ is weakly compact. Let $\mathcal{C},\mathcal{D}$ be $(\kappa ,\kappa )$-pretoposes (which means: $(\kappa ,\kappa )$-coherent with $\kappa $-small disjoint coproducts and Barr-exact). Given a functor $F:\mathbf{Coh}_{\kappa ,\kappa }(\mathcal{D},\mathbf{Set})\to \mathbf{Coh}_{\kappa ,\kappa }(\mathcal{C},\mathbf{Set})$ it is of the form $-\circ M$ for some $(\kappa ,\kappa )$-coherent $M:\mathcal{C}\to \mathcal{D}$ iff there is a lift
\[\begin{tikzcd}
	{\mathbf{Coh}_{\kappa ,\kappa }(\mathcal{D},\mathbf{Set})} && {\mathbf{Coh}_{\kappa ,\kappa }(\mathcal{C},\mathbf{Set})} \\
	{\mathcal{D}\downarrow Sh(\mathbf{BAlg}_{\kappa ,\kappa })} && {\mathcal{C}\downarrow Sh(\mathbf{BAlg}_{\kappa ,\kappa })}
	\arrow["F", from=1-1, to=1-3]
	\arrow[hook', from=1-1, to=2-1]
	\arrow[hook', from=1-3, to=2-3]
	\arrow["{\widetilde{F}}"', dashed, from=2-1, to=2-3]
\end{tikzcd}\]
    such that $\widetilde{F}$ preserves products and $\kappa $-filtered colimits. (I.e.~if we can define $F$ not only on $\mathbf{Set}$-models but also on $Sh(\mathbf{BAlg}_{\kappa ,\kappa })$-models, moreover this extension has the mentioned property.)
\end{theorem}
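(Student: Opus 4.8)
The plan is to reduce the statement to Makkai's characterization (Theorem \ref{m2}) transported along the equivalence of Theorem \ref{Lurie}, and then to upgrade the resulting $\kappa$-regular functor to a coherent one by a completeness argument. First I would fix the dictionary between the two columns of the square. A coherent $\mathbf{Set}$-model $N:\mathcal{D}\to\mathbf{Set}$ is the same as a $Sh(2)$-valued model, so the vertical maps realise $\mathbf{Coh}_{\kappa,\kappa}(\mathcal{D},\mathbf{Set})$ inside $\mathcal{D}\downarrow Sh(\mathbf{BAlg}_{\kappa,\kappa})$ as the $B=2$ models; since the only Boolean homomorphism $2\to 2$ is the identity, these inclusions are full and faithful. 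Under $\Gamma$ a $Sh(2)$-model $N$ goes to $Sh(2)(1,N-)=N$, and by Proposition \ref{psi} with $B=2$ the Boolean algebra $Sub^{\neg}_{\mathcal{C}_{R}}(1)$ attached to a coherent $R$ is again $2$, so $M_{\bullet}$ restricted to coherent $\mathbf{Set}$-models is exactly this inclusion. Hence $\Gamma$ and $M_{\bullet}$ identify the coherent-into-regular inclusions on both sides with the vertical arrows of the square.

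For the ``only if'' direction, given $(\kappa,\kappa)$-coherent $M:\mathcal{C}\to\mathcal{D}$ I would set $\widetilde{F}=-\circ M$, sending $N:\mathcal{D}\to Sh(B)$ to the composite $N\circ M$ (coherent, being a composite of coherent functors) and a $1$-cell $(H_*,\nu)$ to $(H_*,\nu M)$. This lifts $F$ because on $B=2$ models it is $N\mapsto N\circ M=F(N)$. Since $\Gamma(N\circ M)=\Gamma N\circ M$, the functor $\widetilde{F}$ corresponds under Theorem \ref{Lurie} to $-\circ M:\mathbf{Reg}_{\kappa}(\mathcal{D},\mathbf{Set})\to\mathbf{Reg}_{\kappa}(\mathcal{C},\mathbf{Set})$, which preserves products and $\kappa$-filtered colimits by the easy direction of Theorem \ref{m2}; as $\Gamma$ and $M_{\bullet}$ are equivalences, the same holds for $\widetilde{F}$.

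For the ``if'' direction I would transport the given $\widetilde{F}$ to $G:=\Gamma\circ\widetilde{F}\circ M_{\bullet}:\mathbf{Reg}_{\kappa}(\mathcal{D},\mathbf{Set})\to\mathbf{Reg}_{\kappa}(\mathcal{C},\mathbf{Set})$. Because $\Gamma$ and $M_{\bullet}$ are quasi-inverse equivalences they preserve all limits and colimits, so $G$ inherits preservation of products and $\kappa$-filtered colimits from $\widetilde{F}$; Theorem \ref{m2} then produces a $\kappa$-regular $M:\mathcal{C}\to\mathcal{D}$ with $G\cong-\circ M$. Restricting to $B=2$ models and using the dictionary of the first paragraph, $G$ restricted to $\mathbf{Coh}_{\kappa,\kappa}(\mathcal{D},\mathbf{Set})$ is $F$ followed by the coherent-into-regular inclusion, so for every coherent $\mathbf{Set}$-model $N$ of $\mathcal{D}$ one has $N\circ M=G(N)=F(N)$, which is coherent by hypothesis.

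The main obstacle is the final step: showing the $\kappa$-regular $M$ is in fact $(\kappa,\kappa)$-coherent, i.e.~preserves $\kappa$-small unions. Fix $u=\bigcup_i u_i$ in $\mathcal{C}$; the inequality $\bigcup_i Mu_i\le Mu$ is automatic, and for the reverse I would test against coherent $\mathbf{Set}$-models of $\mathcal{D}$. For such an $N$, since $N\circ M=F(N)$ is coherent and $N$ itself preserves unions, $N(Mu)=F(N)\big(\bigcup_i u_i\big)=\bigcup_i F(N)(u_i)=\bigcup_i N(Mu_i)=N\big(\bigcup_i Mu_i\big)$, so $N(Mu)\le N(\bigcup_i Mu_i)$ for every coherent $\mathbf{Set}$-model $N$. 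The $B=2$ form of the completeness Theorem \ref{completeness}, which applies to the small category $\mathcal{D}$ under weak compactness, then forces $Mu\le\bigcup_i Mu_i$, so $M$ is $(\kappa,\kappa)$-coherent and, by faithfulness of the inclusions, $F\cong-\circ M$. The delicate point is exactly that coherent $\mathbf{Set}$-models of $\mathcal{D}$ be jointly conservative: this is where weak compactness together with the smallness of $\mathcal{D}$ is indispensable, because for a general $Sh(B)$-model $N$ one only obtains $\widetilde{F}(N)\cong M_{\Gamma N\circ M}$ rather than $N\circ M$, so the general-$B$ version of completeness cannot be substituted for the $B=2$ version here.
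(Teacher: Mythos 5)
Your overall skeleton matches the paper's proof: transport $\widetilde{F}$ along the equivalence of Theorem \ref{Lurie}, apply Makkai's Theorem \ref{m2} to obtain a $\kappa$-regular $M$ with the transported functor isomorphic to $-\circ M$, use the $B=2$ dictionary to see that $NM\cong F(N)$ is coherent for every coherent $\mathbf{Set}$-valued $N$, and finish with a completeness argument to upgrade $M$ from $\kappa$-regular to $(\kappa,\kappa)$-coherent. The gap is in that last step. You invoke ``the $B=2$ form of the completeness Theorem \ref{completeness}, which applies to the small category $\mathcal{D}$ under weak compactness'' --- but no such form is available here. The remark following Theorem \ref{completeness} says that for weakly compact $\kappa$ two-valued completeness holds only for $(\kappa,\kappa)$-coherent categories satisfying \emph{a certain smallness assumption}, which is a genuine cardinality restriction in the spirit of $|\mathcal{D}|\leq\kappa$ (cf.~\cite[Proposition 3.1.5.]{inflog} and the second theorem of Section 5.2 of this paper, where exactly such a bound is paid as the price for weakening strong compactness); mere smallness of $\mathcal{D}$ as a category is not that assumption, and Theorem \ref{fromintpret} imposes no such bound. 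For weakly compact but not strongly compact $\kappa$, the coherent $\mathbf{Set}$-valued models of a small $(\kappa,\kappa)$-pretopos need not be jointly conservative --- the failure of two-valued completeness is precisely the reason this paper works with $Sh(B)$-valued models in the first place. So your chain $N(Mu)=N\bigl(\bigcup_i Mu_i\bigr)$ for all coherent $\mathbf{Set}$-models $N$ of $\mathcal{D}$ does not force $Mu\leq\bigcup_i Mu_i$. As written, your argument proves the theorem only for strongly compact $\kappa$, or under an added hypothesis $|\mathcal{D}|\leq\kappa$.

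The paper instead stays with general $B$: it reads the hypothesis as saying that $\widetilde{F}$ itself is of the form $-\circ M$, so that $NM\cong\widetilde{F}(N)$ is $(\kappa,\kappa)$-coherent for \emph{every} coherent $N:\mathcal{D}\to Sh(B)$, and then applies Theorem \ref{completeness} to $\mathcal{D}$: if $\bigcup_i Mu_i\hookrightarrow M\bigl(\bigcup_i u_i\bigr)$ were proper, some coherent $N:\mathcal{D}\to Sh(B)$ would keep it proper, and since $N$ preserves $\kappa$-small unions this contradicts coherence of $NM$. Your closing observation --- that transporting through $\Gamma$ and $M_{\bullet}$ a priori yields $\widetilde{F}(N)\cong M_{\Gamma N\circ M}$ rather than $N\circ M$, since $M_{\bullet}\circ\Gamma\cong\mathrm{id}$ is only known on coherent objects --- is a fair point about the terseness of the paper's sentence ``our condition is equivalent to $\widetilde{F}$ being of the form $-\circ M$,'' and identifying $M_{\Gamma N\circ M}$ with $NM$ does implicitly amount to the coherence of $NM$ that one is after. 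But the correct response to that subtlety is to justify the identification at the level of $Sh(B)$-valued models, not to retreat to two-valued completeness, which simply fails under the stated hypotheses; your substitution turns an unexpanded step of the paper into an argument that is false in the theorem's generality.
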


\begin{proof}
    By Theorem \ref{Lurie} and by Makkai's Theorem \ref{m2}, our condition is equivalent to $\widetilde{F}$ (and therefore $F$) being of the form $-\circ M$ for some $\kappa $-regular $M$. But if for each $N:\mathcal{D}\to Sh(B)$ $(\kappa ,\kappa )$-coherent functor the composite $NM$ is also $(\kappa ,\kappa )$-coherent, it follows that $M$ preserves $\kappa $-small unions: indeed, if $\bigcup _i M(u_i) \hookrightarrow M(\bigcup _i u_i)$ is proper then by the previous theorem for some $N$ so is $\bigcup _i NM(u_i)= N(\bigcup _i M(u_i)) \hookrightarrow NM(\bigcup _i u_i)$.
\end{proof}

\subsection{Elementary maps}

We relate regular functors to products of coherent ones. The following is in \cite[p.124]{barr}.

\begin{definition}
    $\mathcal{C}$, $\mathcal{D}$ are categories with finite limits and $F,G:\mathcal{C}\to \mathcal{D}$ are lex functors. A natural transformation $\alpha :F\Rightarrow G$ is elementary if the naturality squares at monomorphisms are pullbacks.
\end{definition}

\begin{proposition}
\label{elemthenmonic}
    $\mathcal{C}$, $\mathcal{D}$, $F,G:\mathcal{C}\to \mathcal{D}$ are lex. If $\alpha :F\Rightarrow G$ is elementary then each component $\alpha _x$ is monic.
\end{proposition}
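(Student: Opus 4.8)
The plan is to detect monicity through the diagonal, using the standard fact that in a finitely complete category a morphism $f\colon A\to B$ is monic iff the square with top edge $f\colon A\to B$, bottom edge $f\times f\colon A\times A\to B\times B$, left edge $\Delta_A$ and right edge $\Delta_B$ is a pullback. Indeed, the pullback of $\Delta_B$ along $f\times f$ computes the kernel pair of $f$, and $f$ is monic exactly when this kernel pair is the diagonal $\Delta_A$. So I would try to produce this very square for $f=\alpha_x$ out of the elementariness hypothesis.

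First I would fix an object $x$ and observe that the diagonal $\Delta_x\colon x\to x\times x$ is a monomorphism, being split by a projection. Since $\alpha$ is elementary, the naturality square of $\alpha$ at $\Delta_x$ — namely $\alpha_x\colon F(x)\to G(x)$ on top, $\alpha_{x\times x}\colon F(x\times x)\to G(x\times x)$ on the bottom, and $F(\Delta_x),G(\Delta_x)$ on the sides — is a pullback.

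Next I would invoke that $F$ and $G$ are lex, hence preserve binary products. Under the comparison isomorphisms $F(x\times x)\cong F(x)\times F(x)$ and $G(x\times x)\cong G(x)\times G(x)$, the map $F(\Delta_x)$ is identified with $\Delta_{F(x)}$ (because post-composing with $F(\pi_i)$ yields $F(\pi_i\circ\Delta_x)=\mathrm{id}_{F(x)}$), and similarly $G(\Delta_x)$ with $\Delta_{G(x)}$; while naturality of $\alpha$ at the two projections $\pi_1,\pi_2\colon x\times x\to x$ forces $\alpha_{x\times x}$ to coincide with $\alpha_x\times\alpha_x$. Transporting the pullback above along these isomorphisms gives exactly the diagonal square for $\alpha_x$, so by the characterization recalled at the outset $\alpha_x$ is monic.

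The argument is short, and the only genuine bookkeeping is the two identifications $F(\Delta_x)\leftrightarrow\Delta_{F(x)}$ and $\alpha_{x\times x}\leftrightarrow\alpha_x\times\alpha_x$; I do not anticipate a real obstacle here, since both are immediate consequences of preservation of products and of naturality. If one prefers to avoid the kernel-pair characterization, the same conclusion follows directly: given $u,v\colon T\to F(x)$ with $\alpha_x u=\alpha_x v$, the pair $\langle u,v\rangle\colon T\to F(x)\times F(x)$ together with $\alpha_x u\colon T\to G(x)$ forms a commuting cone over the pullback square (since $(\alpha_x\times\alpha_x)\langle u,v\rangle=\Delta_{G(x)}\circ(\alpha_x u)$), whence the induced factorization through $\Delta_{F(x)}$ yields $\langle u,v\rangle=\langle t,t\rangle$ for some $t$, that is $u=t=v$.
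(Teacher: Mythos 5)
Your proposal is correct and is essentially the paper's own argument: both use elementariness at the split mono $\Delta_x$, identify the naturality square (via lexness of $F,G$ and naturality at the projections) with the diagonal square $\Delta_{F(x)}$, $\alpha_x\times\alpha_x$, $\Delta_{G(x)}$, and conclude monicity of $\alpha_x$ by the induced factorization through the pullback --- your final element-chasing paragraph is exactly the paper's proof. Your explicit handling of the comparison isomorphisms and the kernel-pair framing are only cosmetic additions.
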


\begin{proof}
    If in 
\[\begin{tikzcd}
	d \\
	& Fx && {F(x\times x)=F(x)\times F(x)} \\
	&& pb \\
	& Gx && {G(x\times x)=G(x)\times G(x)}
	\arrow["h", dashed, from=1-1, to=2-2]
	\arrow["{\langle f,g\rangle}", curve={height=-18pt}, from=1-1, to=2-4]
	\arrow["{\alpha _x g}"{description}, curve={height=12pt}, from=1-1, to=4-2]
	\arrow["{F(\Delta _x)}", hook, from=2-2, to=2-4]
	\arrow["{\alpha _x}"', from=2-2, to=4-2]
	\arrow["{\alpha _{x\times x}=\alpha _x \times \alpha _x}", from=2-4, to=4-4]
	\arrow["{G(\Delta _x)}"', hook, from=4-2, to=4-4]
\end{tikzcd}\]
$\alpha _xf=\alpha _xg$ then the outer square commutes, hence there is $h$ making both triangles commute, therefore $f=h=g$.
\end{proof}

We will prove that for the unit of a geometric morphism the converse also holds.

\begin{definition}
    $\mathcal{C}$, $\mathcal{D}$, $F:\mathcal{C}\to \mathcal{D}$ are lex. We say that $F$ is strongly conservative if for arrows $c\xrightarrow{f}x\xhookleftarrow{i} u$ the existence of a lift on the left side implies the existence of a lift on the right side:
\[\begin{tikzcd}
	&& Fu &&&& u \\
	Fc && Fx && c && x
	\arrow["Ff", from=2-1, to=2-3]
	\arrow["Fi", hook', from=1-3, to=2-3]
	\arrow[dashed, from=2-1, to=1-3]
	\arrow["f", from=2-5, to=2-7]
	\arrow["i", hook', from=1-7, to=2-7]
	\arrow[dashed, from=2-5, to=1-7]
\end{tikzcd}\]
$F$ is conservative if the above property holds when $f$ is a mono (equivalently: if $F:Sub_{\mathcal{C}}(x)\to Sub_{\mathcal{D}}(Fx)$ is injective, equivalently: if $F$ reflects isomorphisms).
\end{definition}

\begin{remark}
\label{regcons}
If $\mathcal{C}$, $\mathcal{D}$ and $F:\mathcal{C}\to \mathcal{D}$ are regular then $F$ is conservative iff it is strongly conservative (the lift $c\to u$ exists iff $im(f)\subseteq u$). 
\end{remark}

\begin{proposition}
\label{strongconsadj}
 $\mathcal{C}$, $\mathcal{D}$ are lex. Let 
\[\begin{tikzcd}
	{\mathcal{C}} &&& {\mathcal{D}}
	\arrow[""{name=0, anchor=center, inner sep=0}, "L", curve={height=-12pt}, from=1-1, to=1-4]
	\arrow[""{name=1, anchor=center, inner sep=0}, "R", curve={height=-12pt}, from=1-4, to=1-1]
	\arrow["\dashv"{anchor=center, rotate=-90}, draw=none, from=0, to=1]
\end{tikzcd}\]
be an adjunction with left-exact left adjoint. Then the unit $\eta $ is elementary iff $L$ is strongly conservative.
\end{proposition}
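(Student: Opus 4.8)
The plan is to peel both conditions down to the same elementary lifting property and then match them across the adjunction. First I would record that the statement is well-posed: $L$ is lex by hypothesis and $R$, as a right adjoint, preserves all limits, so $RL$ is lex; in particular $RL$ preserves monomorphisms. Hence $\eta :\mathrm{id}_{\mathcal{C}}\Rightarrow RL$ is a natural transformation between lex functors, and for a mono $i:u\hookrightarrow x$ the naturality square

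\[\begin{tikzcd}
	u & {RLu} \\
	x & {RLx}
	\arrow["{\eta _u}", from=1-1, to=1-2]
	\arrow["i"', hook, from=1-1, to=2-1]
	\arrow["{RL(i)}", hook, from=1-2, to=2-2]
	\arrow["{\eta _x}"', from=2-1, to=2-2]
\end{tikzcd}\]

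has both vertical legs monic.

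Next I would reduce ``this square is a pullback'' to a bare lifting condition. Because $RL(i)$ is monic, for any $f:c\to x$ there is at most one $q:c\to RLu$ with $RL(i)\circ q=\eta _x\circ f$, and such $q$ exists exactly when $\eta _x\circ f$ factors through $RL(i)$. Because $i$ is monic, a lift $h:c\to u$ of $f$ along $i$ is unique when it exists, and naturality of $\eta$ together with $RL(i)$ monic then forces $\eta _u\circ h=q$ for free. So the square is a pullback (for this $i$) precisely when, for every $f:c\to x$ such that $\eta _x\circ f$ factors through $RL(i)$, the map $f$ factors through $i$.

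The crux is then to transport this across the adjunction bijection $\mathcal{C}(c,RLx)\cong \mathcal{D}(Lc,Lx)$. Its inverse sends $\eta _x\circ f$ to $\varepsilon _{Lx}\circ L(\eta _x\circ f)=(\varepsilon _{Lx}\circ L\eta _x)\circ Lf=Lf$ by the triangle identity $\varepsilon L\circ L\eta =\mathrm{id}$; and by naturality of transposition in the codomain variable, postcomposition with $RL(i)=R(Li)$ upstairs corresponds to postcomposition with $Li$ downstairs. Hence $\eta _x\circ f$ factors through $RL(i)$ iff $Lf$ factors through $Li$. Substituting this into the lifting condition above converts it verbatim into: whenever $Lf$ factors through $Li$, then $f$ factors through $i$ — which is exactly strong conservativity of $L$ (the definition with $F=L$). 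Quantifying over all $c$, all $f:c\to x$, and all monos $i:u\hookrightarrow x$ yields the biconditional.

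I expect the only delicate point to be the bookkeeping in the last two steps: checking that the two monomorphisms supply the pullback's uniqueness clauses automatically (so that the pullback property genuinely collapses to mere existence of a lift), and pinning down the transpose computation so that it lands on $Lf$ via the correct triangle identity rather than a twisted variant. Both are routine once the monos are in place, so I anticipate no serious obstacle beyond careful transposition.
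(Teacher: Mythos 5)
Your proof is correct and follows essentially the same route as the paper's: reduce the pullback condition on the naturality square to hom-sets (Yoneda), transpose along the adjunction via the triangle identity $\varepsilon L\circ L\eta =\mathrm{id}$, and use monicity of $i$ and $RL(i)$ to collapse the pullback property to the lifting condition defining strong conservativity. The paper compresses all of this into a single stacked diagram of hom-sets, while you spell out pointwise the uniqueness clauses and the transposition bookkeeping that the paper leaves implicit.
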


\begin{proof}
    As the Yoneda-embedding preserves and reflects pullbacks, we need that the upper and hence equivalently the outer square in
\[\begin{tikzcd}
	{\mathcal{C}(c,u)} && {\mathcal{C}(c,x)} \\
	{\mathcal{C}(c,RLu)} && {\mathcal{C}(c,RLx)} \\
	{\mathcal{D}(Lc,Lu)} && {\mathcal{D}(Lc,Lx)}
	\arrow["{i_{\circ}}", hook, from=1-1, to=1-3]
	\arrow["{(\eta _x)_{\circ}}"', from=1-3, to=2-3]
	\arrow["{(\eta _u)_{\circ}}", from=1-1, to=2-1]
	\arrow["{(RLi)_{\circ}}"', hook, from=2-1, to=2-3]
	\arrow["\cong", from=2-1, to=3-1]
	\arrow["{(Li)_{\circ}}"', hook, from=3-1, to=3-3]
	\arrow["\cong"', from=2-3, to=3-3]
	\arrow["L"{pos=0.8}, curve={height=-35pt}, from=1-3, to=3-3]
	\arrow["L"'{pos=0.8}, curve={height=35pt}, from=1-1, to=3-1]
\end{tikzcd}\]
    is a pullback, for every mono $i:u\hookrightarrow x$ and every object $c$ in $\mathcal{C}$. This means exactly that $L$ is strongly conservative.
\end{proof}

\begin{proposition}
    $\mathcal{C}$, $\mathcal{D}$ are regular, in $\mathcal{C}$ every mono is regular. Let 
\[\begin{tikzcd}
	{\mathcal{C}} &&& {\mathcal{D}}
	\arrow[""{name=0, anchor=center, inner sep=0}, "L", curve={height=-12pt}, from=1-1, to=1-4]
	\arrow[""{name=1, anchor=center, inner sep=0}, "R", curve={height=-12pt}, from=1-4, to=1-1]
	\arrow["\dashv"{anchor=center, rotate=-90}, draw=none, from=0, to=1]
\end{tikzcd}\]
be an adjunction with left-exact left adjoint. Then the following are equivalent: 1) $L$ is faithful 2) $L$ is conservative 3) $L$ is strongly conservative 4) $\eta $ is elementary 5) $\eta $ is pointwise mono.
\end{proposition}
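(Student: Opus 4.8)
The plan is to close a cycle of implications among the five conditions, leaning on the results already established so that only one genuinely new step remains. First I would record the free equivalences: Remark~\ref{regcons} gives $2)\Leftrightarrow 3)$ (conservative $\Leftrightarrow$ strongly conservative for regular functors), Proposition~\ref{strongconsadj} gives $3)\Leftrightarrow 4)$ (strong conservativity of $L$ $\Leftrightarrow$ elementarity of $\eta$), and Proposition~\ref{elemthenmonic} gives $4)\Rightarrow 5)$. To invoke Remark~\ref{regcons} I must first observe that $L$ is itself a regular functor: it is left exact by hypothesis and, being a left adjoint, preserves all colimits and in particular coequalizers, hence effective epimorphisms. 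The equivalence $1)\Leftrightarrow 5)$ I would get from the standard adjunction identity: the composite $\mathcal{C}(c,x)\xrightarrow{L}\mathcal{D}(Lc,Lx)\xrightarrow{\cong}\mathcal{C}(c,RLx)$ equals post-composition with $\eta_x$, so $L$ is faithful iff each map of this form is injective in $c$, i.e.~iff every $\eta_x$ is monic. With these in hand the five conditions split into the two clusters $\{1),5)\}$ and $\{2),3),4)\}$, already bridged in one direction by $4)\Rightarrow 5)$; it remains only to bridge back, for which I would prove $1)\Rightarrow 2)$.

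For $1)\Rightarrow 2)$ I would argue that a faithful functor reflects both monomorphisms and epimorphisms (by the usual cancellation argument), so it reflects isomorphisms provided that mono-plus-epi forces an isomorphism in the source. This is exactly where the hypothesis that every mono in $\mathcal{C}$ is regular enters: if $Lf$ is invertible then $f$ is simultaneously monic and epic; the monic $f$ is then an equalizer of some pair $g,h$ with $gf=hf$; epicness of $f$ gives $g=h$; and the equalizer of a pair of equal arrows is an isomorphism. Hence $L$ reflects isomorphisms, i.e.~$L$ is conservative, which is condition $2)$.

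Assembling the cycle $1)\Rightarrow 2)\Leftrightarrow 3)\Leftrightarrow 4)\Rightarrow 5)\Rightarrow 1)$ — the last step being the faithful-iff-pointwise-mono equivalence read from $5)$ to $1)$ — yields the equivalence of all five conditions, since this is a directed cycle passing through every condition. The main obstacle, or rather the only point demanding care, is the implication $1)\Rightarrow 2)$: faithfulness alone does not imply conservativity in general, and the argument genuinely uses that monos in $\mathcal{C}$ are regular to upgrade a bimorphism to an isomorphism. A secondary point worth stating explicitly is the verification that $L$ is a regular functor, since Remark~\ref{regcons} is phrased for regular functors; everything else is a direct citation of the preceding propositions.
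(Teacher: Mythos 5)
Your proposal is correct and takes essentially the same route as the paper: the same cycle $1)\Rightarrow 2)\Rightarrow 3)\Rightarrow 4)\Rightarrow 5)\Rightarrow 1)$ through Remark \ref{regcons}, Proposition \ref{strongconsadj} and Proposition \ref{elemthenmonic}, with the regular-mono hypothesis doing the work in $1)\Rightarrow 2)$ exactly as in the paper (you upgrade a bimorphism to an isomorphism; the paper applies faithfulness to the equalizing pair of the given mono directly --- the same argument in a slightly different order). Your two explicit verifications --- that $L$ is a regular functor, which Remark \ref{regcons} tacitly requires, and the adjunction-bijection argument behind the ``well-known'' equivalence $1)\Leftrightarrow 5)$ --- are details the paper leaves implicit, and both are correct.
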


\begin{proof}
    \fbox{1) $\Rightarrow $ 2)} if $i:u\hookrightarrow x$ is the equalizer of $f,g:x\to y$ then $Li$ being iso implies $Lf=Lg$ implies $f=g$ implies $i$ being an iso. \fbox{2) $\Rightarrow $ 3)} Remark \ref{regcons}. \fbox{3) $\Rightarrow $ 4)} Proposition \ref{strongconsadj}. \fbox{4) $\Rightarrow $ 5)} Proposition \ref{elemthenmonic} \fbox{5) $\Rightarrow $ 1)} well-known (similar to Proposition \ref{strongconsadj}).
\end{proof}

\begin{corollary}
\label{unitelem}
    If the unit of a geometric morphism is pointwise mono then it is elementary.
\end{corollary}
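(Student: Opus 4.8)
The plan is to obtain this as an immediate application of the preceding Proposition, once its hypotheses are checked. Recall that a geometric morphism $f:\mathcal{F}\to \mathcal{E}$ is an adjunction $f^*\dashv f_*$ whose inverse image $f^*:\mathcal{E}\to \mathcal{F}$ is left-exact, and that its unit is $\eta:1_{\mathcal{E}}\Rightarrow f_*f^*$. So I would instantiate the previous Proposition with $\mathcal{C}=\mathcal{E}$, $\mathcal{D}=\mathcal{F}$, $L=f^*$ and $R=f_*$; then $L$ is precisely a lex left adjoint and $\eta$ is the unit appearing there.

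What remains is to verify the two standing assumptions of that Proposition, namely that $\mathcal{C}$ and $\mathcal{D}$ are regular and that every monomorphism in $\mathcal{C}$ is regular. Both are standard facts about Grothendieck topoi: every topos is Barr-exact, hence in particular regular, so $\mathcal{E}$ and $\mathcal{F}$ qualify. Furthermore, in any topos every mono $u\hookrightarrow x$ is regular, being the equalizer of its classifying map $\chi_u:x\to \Omega$ and the map $x\to \Omega$ constant at $\top$; thus the hypothesis "in $\mathcal{C}$ every mono is regular" holds for $\mathcal{C}=\mathcal{E}$.

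With the hypotheses verified, the implication from condition 5) to condition 4) in the Proposition yields the statement directly: if the unit $\eta$ is pointwise mono then it is elementary. I do not expect any genuine obstacle here; the only mildly nontrivial point is the well-known fact that every monomorphism in a topos is regular, and even that is invoked only to meet the literal hypothesis of the Proposition.
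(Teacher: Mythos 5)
Your proposal is correct and matches the paper's intended argument exactly: the corollary is stated without proof precisely because it is the implication 5) $\Rightarrow$ 4) of the preceding proposition applied to $L=f^*$, $R=f_*$, and your verification of the hypotheses (toposes are Barr-exact hence regular, and every mono in a topos is regular as the equalizer of its classifying map $\chi _u$ and $\top \circ \mathord{!}$) is just the standard background the paper leaves implicit.
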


\begin{theorem}
\label{regtoprodcoh}
    $\kappa =\aleph _0$ or $\kappa $ is strongly compact. Let $\mathcal{C}$ be $(\kappa ,\kappa )$-coherent with $\kappa $-small disjoint coproducts and let $F:\mathcal{C}\to \mathbf{Set}$ be a $\kappa $-regular functor. Then there are $(\kappa ,\kappa )$-coherent functors $M_i:\mathcal{C}\to \mathbf{Set}$ for which an elementary natural transformation $F\Rightarrow \prod _i M_i$ exists.
\end{theorem}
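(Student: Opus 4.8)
The plan is to pass through the $Sh(B)$-picture supplied by Theorem~\ref{Lurie} and then ``take enough stalks'', with the cardinal hypothesis entering only to guarantee that there are enough.

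First I would apply Theorem~\ref{Lurie}. Since $\mathcal{C}$ is $(\kappa ,\kappa )$-coherent with $\kappa $-small disjoint coproducts and $F$ is $\kappa $-regular, there is a $(\kappa ,\kappa )$-coherent Boolean-algebra $B=Sub_{\mathcal{C}_F}^{\neg }(1)$ and a $(\kappa ,\kappa )$-coherent functor $M_F:\mathcal{C}\to Sh(B)$ with $F\cong \Gamma M_F$, where $\Gamma =Sh(B)(1,-)$. It therefore suffices to build an elementary natural transformation $\beta :\Gamma \Rightarrow \prod _i p_i^{*}$ of functors $Sh(B)\to \mathbf{Set}$, where the $p_i^{*}$ are $(\kappa ,\kappa )$-coherent inverse image functors of points of $Sh(B)$. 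Indeed, precomposing with $M_F$ gives $F\cong \Gamma M_F\Rightarrow \prod _i (p_i^{*}M_F)$; as $M_F$ is lex it carries monos of $\mathcal{C}$ to monos of $Sh(B)$, so each naturality square of $\beta M_F$ at a mono is an instance of a square of $\beta $, hence a pullback, and each $M_i:=p_i^{*}M_F$ is $(\kappa ,\kappa )$-coherent.

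The points I would use are the $\kappa $-points of $Sh(B)$, i.e.\ the $\kappa $-complete ultrafilters $U$ on $B$, with inverse image the stalk $p_U^{*}(X)=colim _{b\in U}X(b)$. Closure of $U$ under $<\kappa $ meets makes this colimit $\kappa $-filtered, so $p_U^{*}$ preserves $\kappa $-small limits; being an inverse image it is cocontinuous, hence $(\kappa ,\kappa )$-coherent. Gathering all such $U$ into one geometric morphism $p:\mathbf{Set}^{I}\to Sh(B)$ with $p^{*}=\prod _U p_U^{*}$, the desired $\beta $ is just $\Gamma $ applied to the unit $\eta :1\Rightarrow p_{*}p^{*}$ (note $\Gamma p_{*}=\Gamma _{\mathbf{Set}^{I}}$, so $\Gamma p_{*}p^{*}=\prod _U p_U^{*}$). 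To see $\beta $ is elementary I would check the pullback condition directly: fix a mono $i:U\hookrightarrow X$ in $Sh(B)$ and a global section $s:1\to X$, and write $a=s^{-1}(U)\in Sub_{Sh(B)}(1)$. Then $s$ factors through $U$ iff $a=\top $, while each $p_V^{*}(s)$ factors through $p_V^{*}(U)$ iff $p_V^{*}(a)=\top $ (using that $p_V^{*}$ is left exact). Since $Sub_{Sh(B)}(1)$ is the frame of $\kappa $-ideals of $B$ and $p_V^{*}(a)=\top $ precisely when $a\cap V\neq \emptyset $, the pullback condition reduces to the subterminal statement: every proper $\kappa $-ideal $a\subsetneq B$ is avoided by some $\kappa $-complete ultrafilter. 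Conceptually this is the assertion that the unit of $p$ is elementary, cf.\ Corollary~\ref{unitelem}.

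The crux, and the one place where the cardinal hypothesis is used, is this avoidance lemma. Given a proper $\kappa $-ideal $a$, I would set $G=\{\,b\in B:\neg b\in a\,\}$; downward closure of $a$ and its closure under $<\kappa $ joins make $G$ a proper $\kappa $-complete filter disjoint from $a$. Strong compactness (for $\kappa =\aleph _0$, the Boolean prime ideal theorem) extends $G$ to a $\kappa $-complete ultrafilter $V$, and $G\subseteq V$ forces $a\cap V=\emptyset $, whence $p_V^{*}(a)=\emptyset $. I expect this extension step to be the main obstacle: it is exactly the failure of spatiality of the $\kappa $-locale $Id_\kappa (B)$ underlying $Sh(B)$ in the general weakly compact case, and strong compactness is precisely what restores enough $\kappa $-complete ultrafilters. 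Assembling the pieces then yields the elementary transformation $F\cong \Gamma M_F\Rightarrow \prod _V M_V$ with each $M_V=p_V^{*}M_F$ a $(\kappa ,\kappa )$-coherent functor, as required.
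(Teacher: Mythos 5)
Your proposal is correct and follows the same global strategy as the paper's proof: realize $F$ as $\Gamma M_F$ via Theorem \ref{Lurie}, then map $Sh(B)$ into a power of $\mathbf{Set}$ indexed by $\kappa $-complete ultrafilters and apply global sections to the unit. Indeed, your geometric morphism $p$ with $p^*=\langle p_U^*\rangle _U$ is (up to the choice of index set) exactly the one the paper induces from the site morphism $J:(B,\tau _{\kappa -coh})\to (2^I,\tau _{can})$, $b\mapsto \{U:b\in U\}$. Where you genuinely diverge is the verification of elementarity: the paper proves $J^*$ conservative and invokes the chain conservative $\Rightarrow $ strongly conservative $\Rightarrow $ unit elementary (Corollary \ref{unitelem}), whereas you verify elementarity of $\beta =\Gamma \eta $ directly, reducing it to the subterminal statement that every proper $\kappa $-complete ideal is avoided by some $\kappa $-complete ultrafilter. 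Your reduction is sound (a global section $s$ factors through $U\hookrightarrow X$ iff $s^{-1}(U)=\top $, and $p_V^*$, being lex, preserves this pullback), and it even needs a formally weaker combinatorial input than the paper's conservativity, which amounts to separating a $\kappa $-complete ideal from an arbitrary principal filter $\uparrow b$ rather than only from $\{\top \}$; this is a nice economy, since elementarity of $F\Rightarrow \prod _i M_i$ only probes global sections.

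One caution on the crux you correctly isolated: as literally phrased, the extension step is false. Strong compactness does not extend $\kappa $-complete filters to $\kappa $-complete ultrafilters on arbitrary $\kappa $-complete Boolean algebras: the measure algebra is complete and atomless, hence admits no $\sigma $-complete ultrafilter whatsoever (split repeatedly into halves of equal measure; $\sigma $-completeness would put $\bot $ in the ultrafilter), so even the trivial filter $\{\top \}$ cannot be extended. What saves your argument is that $B=Sub^{\neg }_{\mathcal{C}_F}(1)$ is $(\kappa ,\kappa )$-coherent, and the existence of enough $\kappa $-complete ultrafilters (equivalently the extension/separation property you need) for such algebras under strong compactness is precisely the content of \cite[Proposition 3.1.9.]{inflog} — the same external input the paper uses in its conservativity check. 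You should cite that result (or reprove it from the coherence of $B$) rather than attribute the step to strong compactness alone; with that fix your proof is complete.
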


\begin{proof}
  There is a $(\kappa ,\kappa )$-coherent functor $M:\mathcal{C}\to Sh(B,\tau _{\kappa -coh})$ for some $(\kappa ,\kappa )$-coherent Boolean-algebra $B$ with $\Gamma M=F$. By the cardinality assumption there are enough $\kappa $-complete ultrafilters on $B$, that is, we can embed it to a power-set Boolean-algebra with a homomorphism $J:B\hookrightarrow 2^I$ which preserves $\kappa $-small $\bigwedge $ and $\bigvee $ (see \cite[Proposition 3.1.9.]{inflog}). Recall that $\tau_{\kappa -coh}$ denotes the topology formed by $\kappa $-small unions. Let $\tau _{can}$ be the topology formed by arbitrary unions. We consider $J$ as a site-morphism $(B,\tau _{\kappa -coh})\to (2^{I},\tau _{can})$ and get:

\[
\adjustbox{width=\textwidth}{
\begin{tikzcd}
	&& {Sh(B,\tau _{\kappa -coh})} && {Sh(B,\tau _{\kappa -coh})} \\
	{\mathcal{C}} & {=} &&&&& {\mathbf{Set}} \\
	&& {Sh(2^I,\tau _{can})=\mathbf{Set}^I} && {Sh(2^I,\tau _{can})=\mathbf{Set}^I}
	\arrow["M", from=2-1, to=1-3]
	\arrow["{J^*}", from=1-3, to=3-3]
	\arrow["{J^*M=\langle M_i\rangle _i}"', from=2-1, to=3-3]
	\arrow[""{name=0, anchor=center, inner sep=0}, Rightarrow, no head, from=1-3, to=1-5]
	\arrow[""{name=1, anchor=center, inner sep=0}, Rightarrow, no head, from=3-3, to=3-5]
	\arrow["{J_*}"', from=3-5, to=1-5]
	\arrow[""{name=2, anchor=center, inner sep=0}, "\Gamma", from=1-5, to=2-7]
	\arrow[""{name=3, anchor=center, inner sep=0}, "\Gamma"', from=3-5, to=2-7]
	\arrow["\eta"', shorten <=9pt, shorten >=9pt, Rightarrow, from=0, to=1]
	\arrow["\cong"', draw=none, from=2, to=3]
\end{tikzcd}
}
\]
We need that $J^*$ is conservative, then by Corollary \ref{unitelem} it follows that $\eta $ is elementary. That is, we have to check that if a family $(b_i\leq b)_{i<\mu }$ is sent to a cover by $J$, then it has a $\kappa $-small subfamily whose union is $b$. In other terms: if for any $\kappa $-complete ultrafilter $U$ with $b\in U$ there is some $i$ s.t.~$b_i\in U$ then there is a $\kappa $-small subfamily whose union is $b$. But otherwise the $\kappa $-small unions formed among the $b_i$'s would form a $\kappa $-complete ideal which is disjoint from $\uparrow b$, and as $B$ is $(\kappa ,\kappa )$-coherent and $\kappa $ is strongly compact we could divide these sets by a $\kappa $-complete ultrafilter, contradicting the assumption. 

To see $Sh(2^I, \tau _{can})=\mathbf{Set}^I$ note that $2^I$ is the lattice of open sets in the discrete topological space $I$ and $Sh(I)=\mathbf{Set}^I$ is obvious. Finally, $\Gamma \circ M=F$, $\Gamma \circ \langle M_i \rangle _i =\prod _i M_i$.
\end{proof}

As often, $\kappa $ can be assumed to be weakly compact at the price of a cardinality restriction on $\mathcal{C}$ and on $F$.

\begin{theorem}
    $\kappa $ is weakly compact. Let $\mathcal{C}$ be a $(\kappa ,\kappa )$-coherent category with $\kappa $-small disjoint coproducts and $F:\mathcal{C}\to \mathbf{Set}$ be a $\kappa $-regular functor. Assume that $|\mathcal{C}|\leq \kappa $ and that $|Fx|\leq \kappa $ for each $x\in \mathcal{C}$. Then there are $(\kappa ,\kappa )$-coherent functors $M_i:\mathcal{C}\to \mathbf{Set}$ for which an elementary natural transformation $F\Rightarrow \prod _i M_i$ exists.
\end{theorem}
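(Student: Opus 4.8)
The plan is to run the proof of Theorem \ref{regtoprodcoh} essentially verbatim; the single point of divergence is the construction of an embedding $J:B\hookrightarrow 2^I$ into a power-set algebra. There, strong compactness supplied enough $\kappa $-complete ultrafilters on an \emph{arbitrary} $(\kappa ,\kappa )$-coherent Boolean-algebra $B$, whereas here I would extract the same embedding from weak compactness together with the bound $|B|\leq \kappa $, which is exactly what the size hypotheses on $\mathcal{C}$ and $F$ buy. First I would apply Theorem \ref{Lurie} to obtain a $(\kappa ,\kappa )$-coherent functor $M:\mathcal{C}\to Sh(B,\tau _{\kappa -coh})$ with $\Gamma M\cong F$, where $B=Sub_{\mathcal{C}_F}^{\neg }(1)$ is a $(\kappa ,\kappa )$-coherent (in particular $\kappa $-complete) Boolean-algebra.

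The second step is to bound $|B|\leq \kappa $. Since $\mathcal{C}$ has $\leq \kappa $ arrows and $|Fx|\leq \kappa $ for every $x$, the category of elements $\int F$ has $\leq \kappa \cdot \kappa =\kappa $ objects (using that $\kappa $ is inaccessible, hence regular) and $\leq \kappa $ arrows. Each slice $\faktor{\mathcal{C}}{x}$ has $\leq \kappa $ objects and arrows, so the $\kappa $-filtered colimit $\mathcal{C}_F=colim _{(x,a)\in (\int F)^{op}}\faktor{\mathcal{C}}{x}$ has $\leq \kappa \cdot \kappa =\kappa $ objects and arrows; a fortiori the complemented subobjects of $1$ satisfy $|B|=|Sub_{\mathcal{C}_F}^{\neg }(1)|\leq \kappa $.

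Third, with $|B|\leq \kappa $ and $\kappa $ weakly compact, the weakly compact analogue of \cite[Proposition 3.1.9.]{inflog} applies and produces a homomorphism $J:B\hookrightarrow 2^I$, where $I$ is the set of $\kappa $-complete ultrafilters on $B$, preserving $\kappa $-small $\bigwedge $ and $\bigvee $. From here the argument is identical to that of Theorem \ref{regtoprodcoh}: I view $J$ as a site morphism $(B,\tau _{\kappa -coh})\to (2^I,\tau _{can})$ with $Sh(2^I,\tau _{can})=\mathbf{Set}^I$, set $\langle M_i\rangle _i:=J^*M$, and verify that $J^*$ is conservative. This reduces to showing that a family $(b_i\leq b)_{i<\mu }$ in $B$ (so $\mu \leq \kappa $) with no $\kappa $-small subfamily of union $b$ is not sent to a cover by $J$: the $\kappa $-complete ideal $\mathcal{I}$ generated by the $b_i$ then misses $b$, so $\uparrow b\cap \mathcal{I}=\emptyset $, and a $\kappa $-complete ultrafilter $U\supseteq \uparrow b$ disjoint from $\mathcal{I}$ (which exists because $|B|\leq \kappa $ and $\kappa $ is weakly compact) witnesses $\bigcup _i J(b_i)\neq J(b)$. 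Conservativity of $J^*$ then yields, through Corollary \ref{unitelem}, that the unit $\eta :1\Rightarrow J_*J^*$ is elementary, and pasting gives an elementary $F=\Gamma M\Rightarrow \Gamma \langle M_i\rangle _i=\prod _i M_i$; each $M_i:\mathcal{C}\to \mathbf{Set}$ is $(\kappa ,\kappa )$-coherent, being $J^*M$ followed by the $(\kappa ,\kappa )$-coherent evaluation $\mathbf{Set}^I\to \mathbf{Set}$.

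The main obstacle I anticipate is precisely the ultrafilter extension underlying the embedding $J$: one must check that at the threshold $|B|\leq \kappa $ the weak compactness of $\kappa $ (equivalently its tree-property characterisation, cf.\ the discussion after Definition \ref{lkcoherent}) already guarantees enough $\kappa $-complete ultrafilters to separate $\uparrow b$ from a $\kappa $-complete ideal. This is the sole place where the hypotheses $|\mathcal{C}|\leq \kappa $ and $|Fx|\leq \kappa $ are consumed, via the size bound of the preceding paragraph. Everything downstream of the embedding --- conservativity, Corollary \ref{unitelem}, and the identifications $\Gamma M=F$ and $\Gamma \langle M_i\rangle _i=\prod _i M_i$ --- is unchanged from the proof of Theorem \ref{regtoprodcoh}.
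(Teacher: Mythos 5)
Your proposal matches the paper's proof: the paper likewise reduces the statement to showing that $Sub_{\mathcal{C}_F}^{\neg }(1)$ has enough $\kappa $-complete ultrafilters, deduces $|\mathcal{C}_F|\leq \kappa $ (hence $|Sub_{\mathcal{C}_F}^{\neg }(1)|\leq \kappa $) from $|\int F|\leq \kappa $ exactly as in your counting step, and then invokes \cite[Proposition 3.1.9.]{inflog} for the weakly compact case, with everything downstream identical to the proof of Theorem \ref{regtoprodcoh}. Your only addition is spelling out the cardinality bookkeeping and the separation of $\uparrow b$ from the $\kappa $-complete ideal, which the paper leaves to the citation.
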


\begin{proof}
    Following the argument of the previous proof it is enough to prove that $Sub_{\mathcal{C}_F}^{\neg }(1)$ has enough $\kappa $-complete ultrafilters. However since $|\int F|\leq \kappa $ it follows that $|\mathcal{C}_F|\leq \kappa $ and hence $|Sub_{\mathcal{C}_F}^{\neg }(1)|\leq \kappa $. We refer to \cite[Proposition 3.1.9.]{inflog} again.
\end{proof}

\begin{question}
Can we eliminate the need for disjoint coproducts? E.g.~when $\mathcal{C}$ is a distributive lattice $(\kappa =\aleph _0)$ these statements simplify to "every filter is the intersection of prime filters" which is true.
\end{question}

\begin{question}
In Theorem \ref{addingconst} $\Gamma :(\mathcal{C},E)\downarrow \mathbf{Topos }_{\kappa }^{\sim }\to \mathbf{Lex}_{\kappa }(\mathcal{C},\mathbf{Set})$ was proved to have a (2,1)-categorical left adjoint $\mathcal{C}_{-}$. Is it true that $\Gamma :(\mathcal{C},E)\downarrow \mathbf{Topos }_{\kappa }\to \mathbf{Lex}_{\kappa }(\mathcal{C},\mathbf{Set})$ has a (2,1)-categorical left adjoint $Spec$ and that the fixed points of this adjunction result the equivalence in Theorem \ref{Lurie} when $\mathcal{C}$ is $(\kappa ,\kappa )$-coherent ($\kappa $ is weakly compact) and $E=\ <\kappa $ effective epic families?

In fact such a left adjoint was constructed in \cite{spec} when $\mathcal{C}$ is an $\omega $-site (plus it satisfies some additional requirements, e.g.~the legs of the covers are monic).
\end{question}

\begin{question}
    We only used that $\kappa $ is weakly compact to make sure that the topology on $\mathcal{C}_F$ is subcanonical (so that e.g.~$Sub_{\mathcal{C}_F}^{\neg }(1)$ is a $(\kappa ,\kappa )$-coherent Boolean-algebra). Is there a version without this assumption, where one is working with $Sh(\mathcal{C}_F)$ instead of $\mathcal{C}_F$ itself?
\end{question}

\subsubsection*{Declarations}

\textbf{Conflict of interest} The author declares that there are no Conflict of interest.

\subsubsection*{Funding}

Supported by the Grant Agency of the Czech Republic under the grant
22-02964S.

\printbibliography

\end{document}